\newtheorem{lemma}{Lemma}[chapter]
\newtheorem{proposition}[lemma]{Proposition}
\newtheorem{theorem}{Theorem}
\newtheorem{definition}[lemma]{Definition}
\newtheorem{corollary}[lemma]{Corollary}
\newcommand{\skipline}{\vspace{12pt}}
\begin{document}

    \pagenumbering{roman} \pagestyle{plain}


\begin{center}

\begin{Large}Global Solutions to the Ultra-Relativistic Euler
Equations\end{Large}\\
\skipline
By\\
\skipline
BRIAN DAVID WISSMAN\\
B.S. (University of California, Davis) 2002\\
M.A. (University of California, Davis) 2004\\ \skipline
DISSERTATION\\
\skipline
Submitted in partial satisfaction of the requirements for the degree of\\
\skipline
DOCTOR OF PHILOSOPHY\\
\skipline
in\\
\skipline
MATHEMATICS\\
\skipline
in the\\
\skipline
OFFICE OF GRADUATE STUDIES\\
\skipline
of the\\
\skipline
UNIVERSITY OF CALIFORNIA,\\
\skipline
DAVIS\\
\vspace{1in}
Approved:\\
\skipline
\rule{2.5in}{1pt}\\
\skipline
\rule{2.5in}{1pt}\\
\skipline
\rule{2.5in}{1pt}\\
\skipline
Committee in Charge\\
\skipline
2007\\

\end{center}


\renewcommand{\baselinestretch}{2}\small\normalsize


\newpage
\tableofcontents

\newpage
\section*{Abstract}

    We prove a global existence theorem for the $3\times 3$ system of
relativistic Euler equations in one spacial dimension.  It is shown
that in the ultra-relativistic limit, there is a family of equations
of state that satisfy the second law of thermodynamics for which
solutions exist globally. With this limit and equation of state,
which includes equations of state for both an ideal gas and one
dominated by radiation, the relativistic Euler equations can be
analyzed by a Nishida-type method leading to a large data existence
theorem, including the entropy and particle number evolution, using
a Glimm scheme. Our analysis uses the fact that the equations of
state are of the form $p=p(n,S)$, but whose form simplifies to
$p=a^{2}\rho$ when viewed as a function of $\rho$ alone.


\newpage
\section*{Acknowledgments}

I would like to thank my mentor, advisor and colleague Blake Temple
for his generous support throughout my time as a graduate student. I
owe much of my success to his constant advice and encouragement. I
would also like to thank my parents and my wife Carri for all their
love and support.  I could not have accomplished much without you.\\
\begin{center}
    THANK YOU!
\end{center}
\newpage

\renewcommand{\topmargin}{-.5in}
\fancypagestyle{headings}{
  \lhead{\slshape \S \thechapter.\rightmark}
  \fancyhfoffset[r]{.5in}
  \rhead{\thepage \skipline \skipline \skipline}
  \cfoot{}
  \renewcommand{\headrulewidth}{0in}
} \fancypagestyle{firstpage}{
  \lhead{}
  \fancyhfoffset[r]{.5in}
  \rhead{\thepage \skipline \skipline \skipline}
  \cfoot{}
  \renewcommand{\headrulewidth}{0in}
} \fancypagestyle{plain}{
  \lhead{}
  \fancyhfoffset[r]{.5in}
  \rhead{\thepage \skipline \skipline \skipline}
  \cfoot{}
  \renewcommand{\headrulewidth}{0in}
}

\pagestyle{headings} \pagenumbering{arabic}

\newpage
\pagestyle{headings} \pagenumbering{arabic}

    \chapter{Introduction}
    \thispagestyle{firstpage}
    \section{The Compressible Euler Equations}
The compressible Euler equations form a nonlinear system of first
order partial differential equations that models a gas as a
continuous medium. Nearly seventy years after Newton wrote down the
laws of motion in his Principia for a system of discrete particles,
$\textbf{F}=m\textbf{a}$, Euler and d'Alembert produced a linear,
continuum theory of sound waves.  These sound waves obeyed the
linear wave equation,
    \begin{displaymath}
      \square u=u_{tt}-c^{2}Div(u)=0,
    \end{displaymath}
where $c>0$ is the sound speed. Several years later, Euler wrote
down the evolution equations for the nonlinear theory of sound
waves.  Today these equations are written as
    \begin{eqnarray}\label{Classical-Euler-Multi-D}
      \rho_{t}+Div\left[\rho u\right]=0,\nonumber\\
      (\rho u)_{t}+Div\left[\rho u \otimes u +pI\right]=0,\nonumber\\
      E_{t}+Div\left[(E+p)u\right]=0,
    \end{eqnarray}
where subscripts in the independent variables denotes partial
differentiation and $Div=\partial/\partial x+\partial/\partial
y+\partial/\partial z$.  In three spacial dimensions, the
compressible Euler equations \eqref{Classical-Euler-Multi-D}, also
called Euler's equations, form a system of five equations with six
unknowns, $\rho$, $\epsilon$, $u^{i}$, and $p$, which closes when an
equation of state, $p=p(\rho,S)$, is prescribed. In the following we
will focus our study on the case of one spacial dimension.  Under
this assumption, the Euler equations reduce to a system of three
equations:
    \begin{eqnarray}\label{Classical-Euler}
      \rho_{t}+\left[\rho u\right]_{x}=0,\nonumber\\
      (\rho u)_{t}+\left[\rho u^{2} +p\right]_{x}=0,\nonumber\\
      E_{t}+\left[(E+p)u\right]_{x}=0.
    \end{eqnarray}
It is well known that even for smooth initial data, discontinuities
form in the fluid variables in the solution to the Cauchy problem in
finite time, \cite{Cour-Friderichs}. Qualitatively, the
nonlinearities in the equations cause waves to propagate at
different speeds leading to the ``breaking" of waves. See Figure
\ref{Shock-Formation}. This loss of regularity corresponds to the
emergence of shock waves.

\begin{figure}
\begin{center}
  \includegraphics[width=5in]{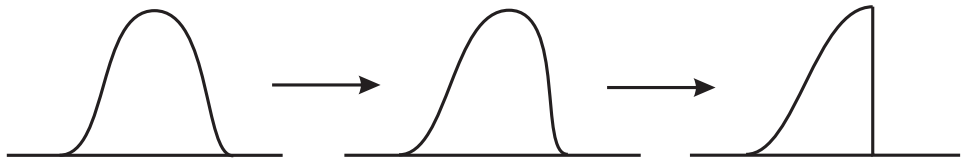}
  \caption{The ``breaking" of a wave front which produces a shock wave.}\label{Shock-Formation}
\end{center}
\end{figure}

The Euler equations are a particular example of a system of
conservation laws.  A system of conservation laws in one spacial
dimension is a first order quasi-linear system of partial
differential equations of the form
    \begin{equation}\label{ConservationLaw}
      U_{t}+F(U)_{x}=0,
    \end{equation}
where $U=(U_{1},\ldots,U_{n})$ are the conserved quantities and
$F(U)=(F_{1}(U),\ldots,F_{n}(U))$ the fluxes.  Much of the early
work on the general structure of systems of conservation laws was
set out by Lax, \cite{Lax}.  Lax's results provided the foundation
necessary for Glimm to give the first general existence theorem in
$1965$, \cite{Glimm}. Glimm's fundamental result provided a new way
to analyze shock wave interactions.  In the $1990$'s, Bressan, Liu
and Yang headed a push for the well posedness of the general
$n\times n$ Cauchy problem, \cite{Bressan-Book}.

A Nishida system is a specific class of conservation laws, which in
certain cases includes the the Euler and Relativistic Euler
equations, that allows one to prove global existence of solutions.
In particular, the shock-rarefaction curves in a Nishida system
behave nicely in the large. Nishida and Smoller were first to gave a
global, large initial data, existence proof for the compressible
Euler equations with a particular equation of state,
\cite{Nishida-Smoller}. Shortly after this, Temple extended Nishida
and Smoller's global existence result by including the entropy
evolution of the gas, \cite{Temple-Large}. More recently, Smoller
and Temple proved that under certain conditions the Relativistic
Euler equations also form a Nishida system,
\cite{Smoller-Temple-Global-Solutions-Rel-Euler}.

It should be noted that the existence theorem for a general system
of conservation laws comes at a cost; we require the initial data to
be of sufficiently small total variation. The smallness requirement
is needed because the structure of the shock-rarefaction curves can
exhibit complicated nonlinear phenomenon in the large. When
sufficiently small data is considered, the analysis can be confined
within a small region in state space in which the shock-rarefaction
curves have a canonical structure that can be exploited when
analyzing solutions.

\bigskip

    \section{The Relativistic Euler Equations}

    In $1905$, Einstein introduced the special theory of
    relativity.  Within this framework one can generalize the
    classical Euler equations to obtain equations that fit within the
    theory of relativity.

    The \textit{relativistic compressible Euler equations} in one spatial dimension form a system
    of three equations,
    \begin{eqnarray}\label{System-Divergence}
      (u^{\alpha}n),_{\alpha} & = & 0,\nonumber\\
      T^{\alpha\beta},_{\alpha} & = & 0,
      \phantom{44}\beta=0,1,
    \end{eqnarray}
    where $T^{\alpha\beta}$ is the stress energy tensor for a
    perfect fluid,
    \begin{displaymath}
      T^{\alpha\beta}=(\rho+p)u^{\alpha}u^{\beta}+p\eta^{\alpha\beta},
    \end{displaymath}
and the subscript ``$,\alpha$" denotes partial differentiation with
respect to the coordinate $x^{\alpha}$.  We will use Einstein's
summation convention where repeated up-down indices are summed and
adopt the following notation:
\begin{eqnarray}
  u^{\alpha} & \phantom{44444444444} & \textrm{Components of the
  }2\textrm{-Velocity}\nonumber\\
  \rho & & \textrm{Proper Rest Energy Density}\nonumber\\
  p & & \textrm{Pressure}\nonumber\\
  \epsilon & & \textrm{Specific Internal Energy}\nonumber\\
  n & & \textrm{Baryon Number}\nonumber\\
  S & & \textrm{Specific Entropy}\nonumber\\
  T & & \textrm{Temperature}\nonumber
\end{eqnarray} The components of the Minkowski metric $\eta^{\alpha\beta}$ are
given by
\begin{displaymath}
    \eta^{\alpha\beta}=\left(
                          \begin{array}{rc}
                            -1 & 0 \\
                            0 & 1 \\
                          \end{array}
                        \right).
\end{displaymath}
For convenience, we will also use units where the speed of light is
unit, $c=1$. The proper energy density, $\rho$, is related to the
particle number density and the internal energy by
$\rho=n(1+\epsilon)$, \cite{Weinberg}. This equation is the sum of
the rest mass energy $nc^{2}=n$ and the internal energy $n\epsilon$.
Furthermore, thermodynamics provides a functional relationship
between the quantities, $\epsilon$, $T$, $S$, $p$ and $n$.  This
relationship is given by the second law of thermodynamics,
\cite{Cour-Friderichs}:
\begin{equation}\label{Thermodynamics}
    d\epsilon=TdS+\frac{p}{n^{2}}dn.
\end{equation}

The relativistic Euler equations \eqref{System-Divergence} can be
written as a system of conservation laws by choosing a particular
Lorentz frame and writing the instantaneous worldline trajectory of
the fluid, $u^{\alpha}$, in terms of the classical velocity $v$. The
components of $(u^{0},u^{1})$ are proportional to the vector $(1,v)$
and is of unit length according to the inner-product defined by the
metric $\eta$.  From this we find the components $u^{\alpha}$ are
related to $v$ by
\begin{displaymath}
  \left(u^{0},u^{1}\right)=\left(\frac{1}{\sqrt{1-v^{2}}},\frac{v}{\sqrt{1-v^{2}}}\right).
\end{displaymath}
Using this, the first equation is equivalent to
\begin{displaymath}
  \frac{\partial}{\partial{t}}\left(\frac{n}{\sqrt{1-v^{2}}}\right)+\frac{\partial}{\partial
  x}\left(\frac{nv}{\sqrt{1-v^{2}}}\right)=0.
\end{displaymath}
The second and third equations in \eqref{System-Divergence} can also
be rewritten. With $\beta=0$ we find
$T^{0\alpha}_{\phantom{00},\alpha}=0$ gives
\begin{displaymath}
  \frac{\partial}{\partial
  t}\left((\rho+p)\frac{1}{1-v^{2}}-p\right)+\frac{\partial}{\partial
  x}\left((\rho+p)\frac{v}{1-v^{2}}\right)=0
\end{displaymath}
and with $\beta=1$, $T^{1\alpha}_{\phantom{00},\alpha}=0$ gives
\begin{displaymath}
  \frac{\partial}{\partial
  t}\left((\rho+p)\frac{v}{1-v^{2}}\right)+\frac{\partial}{\partial
  x}\left((\rho+p)\frac{v^{2}}{1-v^{2}}+p\right)=0.
\end{displaymath}
Simplifying the terms inside, we can write the system
\eqref{System-Divergence} as the system of conservation laws,
\begin{equation}\label{System-Conservation}
  U_{t}+F(U)_{x}=0,
\end{equation}
where,
    \begin{equation}\label{Conserved-Variables}
      U=\left(\frac{n}{\sqrt{1-v^{2}}}\phantom{3}\!,\left(\rho+p\right)\frac{v}{1-v^{2}}\phantom{3}\!,\left(\rho+p\right)\frac{v^{2}}{1-v^{2}}+\rho\right)
    \end{equation}
    and
    \begin{equation}\label{Flux-Function}
      F(U)=\left(\frac{nv}{\sqrt{1-v^{2}}}\phantom{3}\!,\left(\rho+p\right)\frac{v^{2}}{1-v^{2}}+p\phantom{3}\!,\left(\rho+p\right)\frac{v}{1-v^{2}}\right).
    \end{equation}

It is interesting to note that the relativistic Euler equations are
indeed a generalization of the classical Newtonian equations of
hydrodynamics \eqref{Classical-Euler}.  To see this we view
\eqref{System-Conservation} under the assumptions of a classical
fluid; fluid velocities are small compared to the speed of light and
the pressure is dominated by the rest mass.  More specifically, we
assume $|v|\ll 1$ and $p/\rho\ll 1$.  Under these assumptions the
equations $T^{\alpha\beta}_{\phantom{33},\alpha}=0$ become the
equations of motion of a classical gas:
\begin{displaymath}
  \frac{\partial}{\partial t}\rho+\frac{\partial}{\partial x}(\rho
  v)=0
\end{displaymath}
and
\begin{displaymath}
    \frac{\partial}{\partial t}(\rho v)+\frac{\partial}{\partial
    x}(\rho v^{2}+p)=0.
\end{displaymath}
Notice that the density of the fluid in the classical Euler
equations is now replaced by the proper mass-energy density.  The
new variable $n$ is used for conservation of particle number.

Nearly all terrestrial phenomenon falls into the classical,
Newtonian case.  In a hurricane, for example, wind speeds may reach
speeds of $90m/s$.  However, this velocity is insignificant when
compared to the speed of light,
\begin{displaymath}
  |v|\sim 90m/s \sim 3\times10^{-7}c=10^{-7}\ll 1.
\end{displaymath}
Furthermore, the pressure to mass density ratio, $p/\rho$, can be
shown to be of the order of $10^{-12}$, \cite{Gravitiation}. In this
situation the classical Euler equations would certainly suffice.

It is clear from the last example that even in seemingly extreme
situations on Earth, they are far from relativistic events.  We must
look to the cosmos to find examples where a gas has a high enough
pressure to make $p/\rho$ non-negligible and sufficiently high
velocity to make the relativistic correction terms such as
$1/\sqrt{1-v^{2}}$ important to the gas' evolution. These situations
arise in astrophysical events such as gamma-ray bursts, solar flares
and in remnants of supernovas.   The relativistic Euler equations
are also used in modeling the early universe, \cite{Wald-GR}.

Like the classical Euler equations, the relativistic Euler equations
are not closed; an equation of state relating thermodynamic
variables is needed to close the system.  This choice of equation of
state changes the characteristics of the evolution of the gas and
has a significant effect on the complexity of its analysis. A
natural equation of state for a gas is one satisfying the ideal gas
law and whose internal energy is proportional to its temperature.
Using the second law of thermodynamics, one finds the relation
\begin{equation}\label{Polytropic-EOS}
  \epsilon(n,S)=e^{\frac{\gamma-1}{R}S}n^{\gamma-1},
\end{equation}
which for some constant $\gamma>1$ is called a polytropic equation
of state. A polytropic equation of state is typically used to model
air in the classical sense with $\gamma\approx 1.4$. It is known
that using this equation of state vacuums may form in a solution to
\eqref{Classical-Euler} and \eqref{System-Divergence} when
velocities and densities are sufficiently large to completely void a
region of matter. Vacuums pose problems in the standard estimating
techniques and at this point prevents one from obtaining large data
existence theorems, \cite{Smoller}.

A class of equations of state one typically encounters which still
include most desirable dynamics are called barotropic, given by
$p=p(\rho)$. The class of equations of state,
$p=a^{2}\rho^{\gamma}$, for $1< \gamma <2$ are barotropic and are
used in astrophysical modeling.  In this case $0<a$ is constant,
\cite{Anile-RelFluids}.

If one considers \eqref{p-a2rho}, the limiting case of the
barotropic equation of state $p=a^{2}\rho^{\gamma}$ when $\gamma=1$,
the system \eqref{System-Conservation} contains special properties;
in this limit one can prove global solutions exist for initial data
with arbitrarily large, but finite, total variation,
\cite{Smoller-Temple-Global-Solutions-Rel-Euler}. Moreover, vacuums
do not form in the solution.
\begin{equation}\label{p-a2rho}
  p=a^{2}\rho
\end{equation}

In this thesis we will extend these results to prove large data
existence theorem for an ultra-relativistic gas with an equation of
states of the form
    \begin{equation}\label{EOS-Family}
        \epsilon(n,S)=A(S)n^{\gamma-1},
    \end{equation}
where the function $A$ satisfies the following conditions:
\begin{equation}\label{A1}
  A:\mathbb{R}^{+}\rightarrow\mathbb{R}^{+},\tag{A1}
\end{equation}
\begin{equation}\label{A2}
  A\in\mathcal{C}^{1}(\mathbb{R}^{+}),\tag{A2}
\end{equation}
\begin{equation}\label{A3}
  A'(S)>0\phantom{3}\mathrm{for}\phantom{3}S>0.\tag{A3}
\end{equation}
The family \eqref{EOS-Family} includes equations of state for a
polytropic gas \eqref{Polytropic-EOS} and one dominated by radiation
satisfying
\begin{equation}\label{Radiation-EOS}
        \epsilon(n,S)=\frac{a_{R}T^{\frac{\gamma}{\gamma-1}}}{n}.
\end{equation}

Using the relation $\rho=n(1+\epsilon)$, the equations of state
\eqref{EOS-Family} do not reduce to \eqref{p-a2rho}. However, they
do in the ultra-relativistic limit. For the ultra-relativistic
limit, we assume the internal energy dominates the rest mass energy;
in other words, $\rho=n\epsilon$. Under this assumption, an equation
of state of the form \eqref{EOS-Family} reduces to an equation of
state of the form \eqref{p-a2rho} with $a^{2}=(\gamma-1)$. We take
advantage of the fact that in this limit the pressure is still a
function of $n$ and $S$, but whose form reduces to \eqref{p-a2rho}
when viewed as a function of $\rho$ alone. This model now allows one
to find the entropy and particle number density evolution of the gas
and still take advantage of the simplifying effects of an equation
of state of the form \eqref{p-a2rho}.

The particular equation of state \eqref{Radiation-EOS} is also used
to model massless thermal radiation.  In this case the
ultra-relativistic assumption is not needed since the mass-energy in
$\rho$ drops out, leaving only the internal energy. In particular
for $\gamma=4/3$, the radiation dominated equation of state is used
to model the early universe, because this radiation has been
predicted to make the dominant energy contribution, \cite{Wald-GR}.
In either situation, massless particles or in the ultra-relativistic
limit, we still have an equation of state of the form
\eqref{p-a2rho}.

It is interesting that for the classical Euler equations there is
only one way to assign an entropy profile to a gas with an equation
of state of the form \eqref{p-a2rho}.  This equation of state is
given by
\begin{equation}\label{Classical-Limit-Ideal}
  \epsilon(\rho,S)=a^{2}\ln\left(\rho \right)+\frac{a^{2}S}{R}+C,
\end{equation}
for constants $a>0$, $C>0$.  A global existence theorem for the
classical Euler equations with this equation of state was given by
Temple in \cite{Temple-Large}.

\bigskip

\section{Statement of Main Theorem}

    The goal of this paper is to prove the following:
    \begin{theorem}\label{Main-Theorem}
        Let $\rho_{0}(x)$, $v_{0}(x)$ and $S_{0}(x)$ be arbitrary initial data
        satisfying, $\rho_{0}(x)>0$, $-1<v_{0}(x)<1$ and
        $S_{0}(x)>0$.  Let $\Sigma=\ln\left[A(S)\right]$ for $\epsilon(n,S)=A(S)n^{\gamma-1}$, $1<\gamma <2$, and $A$ satisfying
        \eqref{A1}, \eqref{A2} and \eqref{A3}.  Suppose further that
        \begin{equation}\label{Initial-Bound-Entropy}
          Var\{\Sigma_{0}(\cdot)\}<\infty,
        \end{equation}
        \begin{equation}\label{Initial-Bound-Density}
          Var\{\ln(\rho_{0}(\cdot))\}<\infty,
        \end{equation}
        and
        \begin{equation}\label{Initial-Bound-Velocity}
          Var\Bigg\{\ln\left(\frac{1+v_{0}(\cdot)}{1-v_{0}(\cdot)}\right)\Bigg\}<\infty.
        \end{equation}
        Then there exists a bounded weak solution $\{\rho(x,t),v(x,t),S(x,t)\}$ to
        \eqref{System-Conservation} in the Ultra-Relativistic limit,
        satisfying
        \begin{equation}\label{Bound-Enrtopy}
          Var\{\Sigma(\cdot,t)\}< N,
        \end{equation}
        \begin{equation}\label{Bound-Density}
          Var\{\ln(\rho(\cdot,t))\}< N,
        \end{equation}
        and
        \begin{equation}\label{Bound-Velocity}
            Var\Bigg\{\ln\left(\frac{1+v(\cdot,t)}{1-v(\cdot,t)}\right)\Bigg\}<N,
        \end{equation}
        where $N$ is a constant depending only on the initial variation
        bounds in \eqref{Initial-Bound-Entropy}, \eqref{Initial-Bound-Density}, and
        \eqref{Initial-Bound-Velocity}.
    \end{theorem}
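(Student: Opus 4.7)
The plan is to construct the solution by the Glimm random-choice scheme in the ultra-relativistic limit, exploiting the special structure afforded by the equation of state $p = a^2\rho$ with $a^2 = \gamma-1$. The key observation is that although the system \eqref{System-Conservation} is $3\times 3$ in the unknowns $(\rho,v,S)$ (with $n$ recovered from the equation of state), in the ultra-relativistic limit the pressure depends on $\rho$ alone. Consequently the $1$- and $3$-characteristic fields (sound waves) decouple at the level of the Riemann invariants from the middle $2$-field, which is linearly degenerate and carries the entropy and particle-number discontinuities. I will choose as working coordinates the Riemann invariants suggested by \eqref{Bound-Enrtopy}--\eqref{Bound-Velocity}, namely $r = \ln\rho$, $s = \ln\bigl(\tfrac{1+v}{1-v}\bigr)$, and $\Sigma = \ln A(S)$, in which the sound-wave curves should become straight lines of constant slope (independent of base point), following the Smoller--Temple analysis for \eqref{p-a2rho}.

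The first step is the detailed solution of the Riemann problem: classify the three wave families, verify that the $2$-wave is a contact discontinuity across which $v$ and $p$ (hence $\rho$ in the ultra-relativistic limit) are continuous while $n$ and $S$ jump, and write the $1$- and $3$-shock and rarefaction curves explicitly in $(r,s)$-coordinates. The crucial Nishida-type property to verify is that these nonlinear wave curves are translates of one another, so that in $(r,s)$-coordinates the strength of an outgoing $1$- or $3$-wave after an interaction of two such waves is exactly the algebraic sum of the incoming strengths, with zero total-variation change to the leading order. The second step is to quantify the interactions involving the contact discontinuity: when a sound wave crosses a $2$-wave, the Rankine--Hugoniot conditions together with the fact that $p$ depends only on $\rho$ should imply that the transmitted and reflected wave strengths in $(r,s)$ differ from the incident strength by a controlled amount, with the $\Sigma$-jump across the contact acting as a passive scalar that is merely advected.

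The third step is to assemble a Glimm functional of the form $F(J) = L(J) + K\, Q(J)$, where $L$ measures the sum of the strengths of the approaching waves (including jumps in $\Sigma$) in the Riemann invariant coordinates and $Q$ is a quadratic interaction potential. Using the interaction estimates above, I expect to show that $F$ is non-increasing across every diamond in the Glimm mesh, which yields the uniform $BV$-bounds \eqref{Bound-Enrtopy}--\eqref{Bound-Velocity} on the approximate solutions $U_{\Delta x}$. Standard compactness and Glimm's equidistribution argument for the sampling sequence then produce a subsequence converging a.e.\ to a weak solution of \eqref{System-Conservation} in the ultra-relativistic limit, completing the proof.

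The main obstacle I anticipate is the interaction of a sound wave with the entropy/contact discontinuity. In Nishida--Smoller the system is $2\times 2$ and isentropic, and in Temple's extension to the classical $3\times 3$ case with equation of state \eqref{Classical-Limit-Ideal} the coupling through the entropy is delicate but tractable because of the logarithmic form of $\epsilon$. Here the contact discontinuity is a jump in $\Sigma = \ln A(S)$ and in $n$, which must be shown not to alter the translation-invariance of the sound-wave curves in $(r,s)$; equivalently, one must verify that the reflected sound wave produced by a sound--contact interaction is small and can be absorbed into the quadratic potential $Q$ with a universal constant $K$, independent of the base state. Establishing this uniform bound, so that $F$ genuinely decreases across every interaction and no blowup of total variation occurs for large data, is the technical heart of the argument.
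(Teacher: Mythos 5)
Your proposal correctly identifies the ultra-relativistic reduction $p=a^2\rho$, the choice of $\Sigma=\ln A(S)$ as the entropy variable, and the Nishida/Smoller--Temple translation-invariance of the sound-wave curves as the key structural facts. However, there are two genuine gaps in the plan as written, and the first one would cause the argument to fail at exactly the point where a large-data theorem is needed.

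First, you propose a Glimm functional of the form $F(J)=L(J)+K\,Q(J)$ with a quadratic interaction potential $Q$, and then argue that $F$ is nonincreasing across diamonds. That is the standard small-data Glimm framework: the quadratic potential $Q$ can absorb the $O(|\text{strength}|^2)$ interaction errors only when the total variation is small, because $Q$ is of second order in the data. Since the theorem is for arbitrarily large initial variation, no quadratic potential can rescue a functional whose linear part might increase. The whole point of the Nishida-type structure here is that translation-invariance of the shock curves in the $(r,s)$-plane makes the interaction estimate \emph{linear}, not merely ``zero to leading order'': the sum of outgoing shock strengths never exceeds the sum of incoming ones (Proposition~\ref{Interaction-Estimates}, $A+B\le 0$). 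The paper therefore uses only the linear functionals $F(J)$ and $L(J)$ of \eqref{I-Curve-Fntl-F}--\eqref{I-Curve-Fntl-L}, with no quadratic term at all. You need to prove the sharp linear interaction estimate in the $(r,s)$-plane; with the quadratic potential in place, the scheme reverts to a small-data argument.

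Second, you identify sound-wave/contact-discontinuity interaction as the main obstacle, but this interaction is in fact trivial in this system: because $p$ depends on $\rho$ alone, the $1$- and $3$-wave Riemann problem in the $(r,s)$-plane is completely independent of the entropy level, so sound waves pass through the $2$-wave without reflection. The real difficulty, which your plan does not address, is the opposite: every time two sound waves interact, the newly formed $2$-wave carries a jump in $\Sigma$, so $Var\{\Sigma\}$ can \emph{increase} across sound-sound interactions even while $Var_{rs}$ decreases. Controlling this requires showing that $[\Sigma-\Sigma_L]$ along shock curves, viewed as a function of shock strength, is base-point independent, increasing, and convex (Proposition~\ref{EOS-Independent}), and then proving that the increase in $|\delta|$ plus the loss of entropy variation across weakened shocks is bounded by a constant times the decrease in shock strength (Proposition~\ref{Entropy-Interaction}). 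This is what makes the modified functional $L(J)$ with the $-M_0\delta_\alpha$, $-M_0\delta_\beta$, $-M_0|\delta|$ terms nonincreasing, and it is the technical heart missing from your outline. (A minor point: the coordinates you name, $\ln\rho$ and $\ln\tfrac{1+v}{1-v}$, are linear combinations of the true Riemann invariants \eqref{Riemann-Invariant-rs}; translation-invariance transfers, but the uniform slope bound $<1$ from Lemma~\ref{Lemma-Smoller-Temple}, which is essential for defining $C_0<1$, holds in the genuine $(r,s)$ coordinates.)
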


It should be noted that Theorem \ref{Main-Theorem} is a
generalization of the work by Smoller and Temple in
\cite{Smoller-Temple-Global-Solutions-Rel-Euler} that includes the
entropy evolution.  In other words, in this model we are able to
prove global solutions exist including a physically relevant entropy
and particle number density profile. Smoller and Temple found that
the relativistic Euler equations with equation of state
\eqref{p-a2rho} possessed the property that after each elementary
wave interaction in a Glimm scheme, $Var\{\ln(\rho)\}$ is
non-increasing. This functional, introduced by Liu, is used as a
replacement for the quadratic potential in Glimm's original
analysis, which can be used to show that
\eqref{Initial-Bound-Density} and \eqref{Initial-Bound-Velocity}
implies \eqref{Bound-Density} and \eqref{Bound-Velocity}.
Considering the ultra-relativistic limit, the solutions of Riemann
problems are independent of the value of $S$, enabling one to solve
for the intermediate state in the projected state space and place a
corresponding entropy wave between them.

In \cite{Smoller-Temple-Global-Solutions-Rel-Euler} it is shown that
for an equation of state of the form \eqref{p-a2rho}, the shock
curves are translationally invariant in the plane of Riemann
invariants. In our case, this property continues to hold under
certain coordinate changes in the three dimensional non-projected
state space for an equation of state of the form \eqref{EOS-Family}.
This can be viewed as the relativistic analogue of the large data
existence result in \cite{Temple-Large} with a family of distinct
entropy profiles.

The main part of the analysis is showing that $Var\{S\}$ is bounded
in our approximate solutions.  We extend the analysis by Smoller and
Temple for the ultra-relativistic regime with equation of state
given by \eqref{EOS-Family}, by utilizing the geometry of the shock
curves in the space of Riemann invariants. If we only considered the
variation of $S$ across shock waves, we find that $Var\{S\}$ is
uniformly bounded by $Var\{\ln(\rho)\}$ for a polytropic equation of
state. However, across the linearly degenerate entropy waves, there
is no change in pressure, and hence no jump in proper energy density
by \eqref{p-a2rho}.  Thus, another method must be employed to
estimate the strengths of these jumps. For a gas dominated by
radiation or for a general equation of state of the form
\eqref{EOS-Family}, the situation seems more dire as the change in
entropy across a shock depends on the initial entropy value.  It is
not known \textit{a priori} that this dependence does not lead to
blow-up in the variation in $S$.

Furthermore, in certain elementary wave interactions, $Var\{S\}$ may
actually increase while $Var\{\ln(\rho)\}$ remains invariant.
Complicating matters, using $\Delta\ln(\rho)$ as the definition of
wave strengths increases the technicality of the entropy wave
estimates. For example, after the interaction of two shocks of the
same family, the entropy change across the new shock may be less
than the sum of the jumps across the two preceding shock waves. This
happens because the new shock wave has strength less than the sum of
the two previous. In other words when two shock waves combine, the
strengths are not simply additive, but the new wave strength is
strictly less than the simple sum of the incoming shock strengths.
It follows that under certain circumstances the change in entropy
across the new single shock may be less than the sum of the entropy
jumps across the approaching shocks.

To alleviate these technicalities, we propose a more classical
approach by using the change of Riemann invariants as a measure of
wave strength. More specifically, the strength of a $1(3)-$shock is
determined by the change in the first$($third$)$-Riemann invariant
and the contact discontinuity by the change in entropy or a specific
change in a function of the entropy. Using the change in Riemann
invariants as a measure of wave strength for a Nishida system was
used to prove existence of solutions in \cite{Liu},
\cite{Nishida-Smoller} and \cite{Temple-Large}.  Under this regime,
wave strengths are now additive and the sum of all the strengths of
shock waves is shown to be non-increasing in time. Moreover, the
wave interaction estimates can be analyzed as in the classical case.
In conclusion, using $\Delta\ln(\rho)$ as a measure of wave strength
dramatically simplifies the interaction estimates for the nonlinear
waves, but complicates the problem dealing with entropy.

In summary, we show there exists a family of equations of state,
which include the case of a polytropic and radiation dominated gas,
that one can use and obtain a global existence theorem. These
equations of state allow one to also calculate the entropy and
particle density associated with the gas. This is in contrast with
the classical case where there is one equation of state with the
same properties corresponding to very heavy molecules.

\skipline

\noindent The rest of this paper is outlined as follows:

In Chapter \ref{Chapter-Relativistic-Gas-Dynamics}, we give a
detailed analysis of the structure of simple wave solutions of
\eqref{System-Conservation}.  Using these properties, we prove
global existence of solutions to Riemann problems.  Furthermore, we
obtain \textit{a priori} wave interaction estimates which will be
used to produce estimates on approximate solutions constructed using
a Glimm scheme in Chapter \ref{Glimm-Difference-Scheme}.

In Chapter \ref{Glimm-Difference-Scheme} we give an overview of the
Glimm difference scheme and prove estimates on the approximate
solutions obtained for system \eqref{System-Conservation}.  Chapter
\ref{Existence} contains the proof of our main theorem.

    \chapter{Relativistic Gas
    Dynamics}\label{Chapter-Relativistic-Gas-Dynamics}
    \thispagestyle{firstpage}
    \section{Gas Dynamics}
    We consider a gas where the proper energy density and pressure
satisfy the relationship \eqref{p-a2rho}. Causality restricts the
sound speed $c_{s}=\sqrt{dp / d\rho}=a$ to be less than unity. Under
assumption \eqref{p-a2rho}, the system \eqref{System-Conservation}
decouples so that we may solve for two variables first, then solve
for the third afterward. In this section, we will show in the domain
$\rho>0$, $-1<v<1$, and $S>0$, Riemann problems are globally
solvable and their general structure consists of two waves separated
by a jump in entropy traveling with the fluid. We then discuss wave
interaction estimates which will allow us to prove global existence
of solutions using a Glimm scheme in Chapter \ref{Existence}. Our
analysis uses the special geometry of the shock and rarefaction
curves in the space of Riemann invariants.

To begin, we will compute the eigenvalues and eigenvectors
associated with the relativistic Euler equations
\eqref{System-Conservation}. In order to simplify this process, we
will exchange the first equation, conservation of particle number,
with the equivalent equation that says that entropy is constant
along flow lines.  We note that this equation holds for continuous
solutions, but fails when shock waves form since entropy increases
across shocks, \cite{Cour-Friderichs}.


\begin{proposition}
  For smooth solutions of \eqref{System-Conservation}, the following supplemental equation
  holds:
  \begin{equation}\label{Entropy-Div0}
    u^{\alpha}S_{,\alpha}=0.
  \end{equation}
  More specifically after choosing a particular Lorentz frame,
  \begin{equation}\label{Entropy-Conserved}
    S_{t}+vS_{x}=0.
  \end{equation}
\end{proposition}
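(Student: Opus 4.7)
The plan is to derive the entropy transport equation algebraically by combining particle number conservation, energy-momentum conservation projected along the flow, and the thermodynamic identity \eqref{Thermodynamics}. The key observation is that contracting $T^{\alpha\beta}{}_{,\alpha}=0$ with the four-velocity $u_\beta$ produces a purely scalar balance law from which the pressure-gradient term largely cancels, leaving a relation between the convective derivatives of $\rho$ and $n$ that, via $\rho=n(1+\epsilon)$ and the first law, is exactly $Tu^{\alpha}S_{,\alpha}=0$.

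Concretely, first I would write $T^{\alpha\beta}=(\rho+p)u^{\alpha}u^{\beta}+p\eta^{\alpha\beta}$, compute $T^{\alpha\beta}{}_{,\alpha}$ by the Leibniz rule, and contract with $u_\beta$. Using the normalization $u_\beta u^{\beta}=-1$ (which gives $u_\beta u^{\beta}{}_{,\alpha}=0$), the contraction reduces to
\begin{equation*}
u^{\alpha}\rho_{,\alpha}+(\rho+p)\,u^{\alpha}{}_{,\alpha}=0.
\end{equation*}
Next, from the particle-number equation $(nu^{\alpha})_{,\alpha}=0$ I would solve for the divergence $u^{\alpha}{}_{,\alpha}=-u^{\alpha}n_{,\alpha}/n$ and substitute it in, producing $u^{\alpha}\bigl[\rho_{,\alpha}-\tfrac{\rho+p}{n}n_{,\alpha}\bigr]=0$.

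The final step is to translate this into entropy. Using $\rho=n(1+\epsilon)$ so that $\rho_{,\alpha}=(1+\epsilon)n_{,\alpha}+n\epsilon_{,\alpha}$, the bracketed expression simplifies to $n\bigl[\epsilon_{,\alpha}-\tfrac{p}{n^{2}}n_{,\alpha}\bigr]$. Now I invoke the second law \eqref{Thermodynamics}, which in differentiated form reads $\epsilon_{,\alpha}=TS_{,\alpha}+\tfrac{p}{n^{2}}n_{,\alpha}$; inserting this causes the $p/n^{2}$ terms to cancel, leaving $nT\,u^{\alpha}S_{,\alpha}=0$. Since $n,T>0$, we conclude \eqref{Entropy-Div0}.

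For the component form \eqref{Entropy-Conserved}, I would substitute the Lorentz-frame expression $u^{\alpha}=(1,v)/\sqrt{1-v^{2}}$ into $u^{\alpha}S_{,\alpha}=0$ and divide through by the positive factor $1/\sqrt{1-v^{2}}$. No real obstacle arises; the only mildly delicate point is being careful with the cancellation $u_\beta u^{\beta}{}_{,\alpha}=0$ in the projection step, which is what makes $p$ drop out of the scalar equation and allows the thermodynamic identity to deliver the entropy statement cleanly.
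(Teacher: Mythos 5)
Your proposal is correct and takes essentially the same approach as the paper: contract $T^{\alpha\beta}{}_{,\beta}=0$ with $u$, use $u_\alpha u^{\alpha}=-1$ (hence $u_\alpha u^{\alpha}{}_{,\beta}=0$), invoke particle-number conservation, and convert the resulting scalar identity into $nTu^{\alpha}S_{,\alpha}=0$ via the second law. The paper organizes the algebra slightly differently by introducing the specific enthalpy $\omega=1+\epsilon+p/n$ (so $n\omega=\rho+p$) and using $(nu^{\beta})_{,\beta}=0$ before contracting, whereas you contract first and eliminate $u^{\alpha}{}_{,\alpha}$ afterward, but the mathematical content is identical.
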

\begin{proof}
We show that conservation of energy and momentum is equivalent to
continuous flow being adiabatic, i.e. \eqref{Entropy-Conserved}.  We
take the stress-energy tensor of a perfect fluid,

    \begin{eqnarray}
      T^{\alpha\beta} & = & n\left(1+\epsilon+\frac{p}{n}\right)u^{\alpha}u^{\beta}+p\eta^{\alpha\beta}\nonumber,\\
                      & = & n\omega u^{\alpha}u^{\beta}+p\eta^{\alpha\beta},\nonumber
    \end{eqnarray}
    with $\omega=\left(1+\epsilon+\frac{p}{n}\right)$ for convenience.  Then conservation of energy-momentum equation,
    $T^{\alpha\beta}_{\phantom{\alpha\beta},\beta}=0$, is given by

    \begin{eqnarray}\label{con-energy-momentum}
      0 & = & T^{\alpha\beta}_{\phantom{\alpha\beta},\beta},\nonumber\\
        & = & \left(n\omega u^{\alpha}u^{\beta}\right)_{,\beta}+p_{,\beta}\eta^{\alpha\beta},\nonumber\\
        & = & n u^{\beta}\left(\omega u^{\alpha}\right)_{,\beta}+p_{,\beta}\eta^{\alpha\beta}.
    \end{eqnarray}
    where conservation of particle number, $\left(n u^{\beta}\right)_{,\beta}=0$, is used in the last step.
    Multiplying \eqref{con-energy-momentum} by $-u_{\alpha}$ and summing we find,
    \begin{equation}\label{con-energy-momentum2}
      0=-u_{\alpha}T^{\alpha\beta}_{\phantom{\alpha\beta},\beta}=-n u^{\beta}\left(\omega
      u^{\alpha}\right)_{,\beta}u_{\alpha}-p_{,\beta}u^{\beta}.
    \end{equation}
    To simplify this expression, we claim,
    \begin{displaymath}
      u_{\alpha}\left(\omega u^{\alpha}\right)_{,\beta}=-\omega_{,\beta}.
    \end{displaymath}
    Indeed,
    \begin{displaymath}
      u_{\alpha}\left(\omega
      u^{\alpha}\right)_{,\beta}=u_{\alpha}u^{\alpha}\omega_{,\beta}+\left(u_{\alpha}u^{\alpha}_{\phantom{\alpha},\beta}\right)\omega=-\omega_{,\beta},
    \end{displaymath}
    where the second term, $\left(u_{\alpha}u^{\alpha}_{\phantom{\alpha},\beta}\right)\omega$, vanishes because
    \begin{displaymath}
      0=\left(u_{\alpha}u^{\alpha}\right)_{,\beta}=2u_{\alpha}u^{\alpha}_{\phantom{\alpha},\beta}.
    \end{displaymath}
    Thus, \eqref{con-energy-momentum2} now reads,
    \begin{eqnarray}
      0 & = & n\omega_{,\beta}u^{\beta}-p_{,\beta}u^{\beta},\nonumber\\
        & = & \left(\epsilon_{,\beta}u^{\beta}+\left(\frac{p}{n}\right)_{\!\!,\beta}u^{\beta}\right)-p_{,\beta}u^{\beta},\nonumber\\
        & = & n\left(\epsilon_{,\beta}u^{\beta}+\left(\frac{1}{n}\right)p_{,\beta}u^{\beta}+p\left(\frac{1}{n}\right)_{\!\!,\beta}u^{\beta}\right)-p_{,\beta}u^{\beta},\nonumber\\
        & = & n\left(\epsilon_{,\beta}+p\left(\frac{1}{n}\right)_{\!\!,\beta}\right)u^{\beta},\nonumber\\
        & = & n T S_{,\beta}u^{\beta}.\nonumber
    \end{eqnarray}
    The last step follows from the second law of thermodynamics.
    Since $n,T\neq 0$ we conclude, $u^{\beta}S_{,\beta}=0$.
    Furthermore, after choosing a particular frame of reference and
    replacing the worldline trajectory with
    \begin{displaymath}
      u=\left(\frac{1}{\sqrt{1-v^{2}}},\frac{v}{\sqrt{1-v^{2}}}\right),
    \end{displaymath}
    we get
    \begin{displaymath}
      \frac{1}{\sqrt{1-v^{2}}}S_{t}+\frac{v}{\sqrt{1-v^{2}}}S_{x}=0.
    \end{displaymath}
    In particular, since $\frac{1}{\sqrt{1-v^{2}}}\neq 0$,
    \eqref{Entropy-Conserved} holds.
\end{proof}

It is interesting to note that \eqref{Entropy-Div0} continues to
hold in curved spacetimes within general relativity.  For this case,
differentiation is replaced by covariant differentiation.

In order to solve the Riemann problem by a series of simple waves,
we need to know that the corresponding wave speeds are distinct.  If
this is the case the system is strictly hyperbolic.

\begin{definition}  We call a system of conservation laws
\eqref{ConservationLaw} \textbf{Strictly Hyperbolic} in an open
connected subset $U\subseteq \mathbb{R}^{n}$ if at each point $u\in
U$, $dF$ has $n$ real distinct eigenvalues,
$\{\lambda_{i}(u)\}_{i=1}^{n}$, such that
\begin{displaymath}
  \lambda_{1}(u)<\ldots <\lambda_{n}(u).
\end{displaymath}
\end{definition}

Since a strictly hyperbolic system has $n$ distinct eigenvalues, the
corresponding eigenvectors form a basis at every point in $U$. Along
with strict hyperbolicity, we require one more assumption on the
eigenvector-eigenvalue pairs;  the corresponding eigenvalues are
either constant or monotonically increasing or decreasing along the
integral curves determined by the eigenvectors.

\begin{definition}
Let $\left\{(\lambda_{i}(u),R_{i}(u))\right\}_{i=1}^{n}$ be the
eigenvalue-eigenvector pairs associated with $dF$ for a strictly
hyperbolic conservation law in an open connected subset $U\subseteq
\mathbb{R}^{n}$ with $\lambda_{1}(u)<\ldots <\lambda_{n}(u)$. We
call the $\textrm{i}^{\textrm{th}}$ characteristic field
\textbf{Genuinely Non-Linear} in $U$ if for all $u\in U$,
\begin{displaymath}
  R_{i}(u)\cdot\nabla\lambda_{i}(u)\neq 0,
\end{displaymath}
and \textbf{Linearly Degenerate} if for all $u\in U$,
\begin{displaymath}
  R_{i}(u)\cdot\nabla\lambda_{i}(u)= 0.
\end{displaymath}
\end{definition}
In the following proposition we characterize the three eigenclasses
of the system \eqref{System-Conservation}.

\begin{proposition}  Let $p=a^{2}\rho$ with $0<a<1$.  Then the system
$\eqref{System-Conservation}$ is strictly hyperbolic at $(\rho,v,S)$
for $\rho>0$, $-1<v<1$ and $S>0$.  Furthermore, the first and third
characteristic fields are genuinely non-linear and the second
linearly degenerate.
\end{proposition}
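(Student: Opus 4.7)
The plan is to work in the non-conservative form with primary variables $(\rho,v,S)$. By the preceding proposition, for smooth solutions I may replace conservation of particle number $(nu^{\alpha})_{,\alpha}=0$ by the equivalent advection equation $S_t+vS_x=0$. The decisive observation is that under $p=a^2\rho$ the two components of $T^{\alpha\beta}{}_{,\alpha}=0$, as written just before \eqref{System-Conservation}, involve only $\rho$ and $v$. Consequently the full $3\times 3$ quasi-linear system decouples into a $2\times 2$ block in $W=(\rho,v)$ coupled to a scalar transport equation $S_t+vS_x=0$. The matrix $A(U)$ then has the block form
\begin{displaymath}
A(U)=\begin{pmatrix} B(\rho,v) & 0 \\ 0 & v \end{pmatrix},
\end{displaymath}
so its spectrum is that of $B$ together with the eigenvalue $v$ whose eigenvector is the pure entropy direction $R_2=(0,0,1)^{T}$.

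Next I would compute the spectrum of $B$. Expanding the two energy-momentum equations using $p=a^2\rho$ and solving $\det(B-\lambda I)=0$, I expect the characteristic speeds to come out as the relativistic addition of $v$ and the sound speed $a=\sqrt{dp/d\rho}$, namely
\begin{displaymath}
\lambda_{1}=\frac{v-a}{1-av}, \qquad \lambda_{3}=\frac{v+a}{1+av}.
\end{displaymath}
For $-1<v<1$ and $0<a<1$ the denominators $1\pm av$ are strictly positive, so $\lambda_1,\lambda_3$ are real, and a direct comparison yields $\lambda_1<v<\lambda_3$. Combined with $\lambda_2=v$ this gives strict hyperbolicity throughout the stated domain.

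For the second field, since $\lambda_2=v$ is independent of $S$ and $R_2$ points in the $S$-direction, we immediately get $R_2\cdot\nabla\lambda_2=0$, so this field is linearly degenerate; this also identifies the entropy wave as a contact discontinuity. For the first and third fields I would solve $(B-\lambda_{\pm}I)r_{\pm}=0$ for the $2$-component eigenvectors $r_{\pm}$, lift them to $R_{1,3}=(r_{\pm},0)^{T}$, and compute $R_{i}\cdot\nabla\lambda_{i}$ directly. Because $\lambda_{\pm}$ depends on $\rho$ only through $a$ and here $a$ is constant, $\nabla\lambda_{\pm}$ has no $\rho$-component, so the calculation reduces to checking that the $v$-component of $r_{\pm}$ is nonzero and that $\partial\lambda_{\pm}/\partial v=(1-a^2)/(1\pm av)^2\neq 0$, which is manifest.

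The main obstacle is purely computational: expanding $T^{\alpha\beta}{}_{,\alpha}=0$ in quasilinear form introduces the relativistic factors $1/(1-v^2)$ which muddy the entries of $B$, so care is required when extracting the characteristic polynomial. A cleaner route is to pre-multiply by the inverse of the Jacobian of the conserved quantities in $(\rho,v)$, or equivalently to form the linear combination of the two equations that isolates $\rho_t$ and $v_t$; once this algebraic step is done, the eigenvalue computation and the genuine nonlinearity check are routine and the proposition follows.
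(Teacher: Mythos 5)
Your proposal is correct and follows essentially the same route as the paper: replace particle-number conservation by the advection equation $S_t+vS_x=0$, pass to quasilinear form in $(\rho,v,S)$, compute the characteristic speeds $\lambda_1=\frac{v-a}{1-av}$, $\lambda_2=v$, $\lambda_3=\frac{v+a}{1+av}$, and check $R_i\cdot\nabla\lambda_i$ for each field. Your explicit appeal to the block-diagonal structure of the quasilinear coefficient matrix is a slightly more conceptual framing, but it coincides with what the paper's computed matrix $G(\omega)$ in fact has, and the eigenvalue/eigenvector and genuine-nonlinearity/degeneracy checks are the same.
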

\begin{proof}

  Equivalent systems of equations possess the same eigenvalues, so we will
  replace the conservation of particle number equation with the
  equivalent equation \eqref{Entropy-Conserved}.  Since the flux
  functions \eqref{Flux-Function} are complicated implicit functions of
  the conserved variables \eqref{Conserved-Variables}, our plan
  is to rewrite the conservation laws \eqref{System-Conservation}
  as
  \begin{displaymath}
    \omega_{t}+G(\omega)\omega_{x}=0,
  \end{displaymath}
  where $\omega=(\rho,v,S)^{T}$, then calculate the eigenvalues and
  eigenvectors in terms of these variables.  To do this we rewrite
  \eqref{System-Conservation} using the chain rule as
  \begin{displaymath}
    A(\omega)\omega_{t}+B(\omega)\omega_{x}=0,
  \end{displaymath}
  then find $G(\omega)$ by multiplying on the left by $A^{-1}$ to get
  \begin{displaymath}
    \omega_{t}+\left[A^{-1}B\right](\omega)\omega_{x}=0.
  \end{displaymath}
  By the chain rule,
  \begin{displaymath}
    A(\omega)=\left[
         \begin{array}{ccc}
           0 & 0 & 1 \\
           (a^{2}+1)\frac{v}{1-v^{2}} & (a^{2}+1)\rho\frac{1+v^{2}}{(1-v^{2})^{2}} & 0 \\
           (a^{2}+1)\frac{v^{2}}{1-v^{2}}+1 & (a^{2}+1)\rho\frac{2v}{(1-v^{2})^{2}} & 0 \\
         \end{array}
       \right]
  \end{displaymath}
  and
  \begin{displaymath}
    B(\omega)=\left[
         \begin{array}{ccc}
           0 & 0 & v \\
           (a^{2}+1)\frac{v^{2}}{1-v^{2}+a^{2}} & (a^{2}+1)\rho\frac{2v}{(1-v^{2})^{2}} & 0 \\
           (a^{2}+1)\frac{v}{1-v^{2}} & (a^{2}+1)\rho\frac{1+v^{2}}{(1-v^{2})^{2}} & 0 \\
         \end{array}
       \right].
  \end{displaymath}
  Note that $A(\omega)$ is invertible because for $-1<v<1$ and $\rho>0$,
  \begin{displaymath}
    Det[A(\omega)]=\frac{(1+a^{2})(a^{2}v^{2}-1)}{(1-v^{2})^{2}}\rho\neq
    0.
  \end{displaymath}
  After some work we get
  \begin{displaymath}
    A^{-1}(\omega)=\left[
         \begin{array}{ccc}
           0 & \frac{2v}{a^{2}v^{2}-1} & \frac{1+v^{2}}{1-a^{2}v^{2}}\\
           0 & \frac{(1-v^{2})(1+a^{2}v^{2}}{(a^{2}+1)\rho(1-a^{2}v^{2})} & \frac{v^{3}-v}{\rho(1-a^{2}v^{2})}\\
           1 & 0 & 0 \\
         \end{array}
       \right].
  \end{displaymath}
  Therefore,
  \begin{displaymath}
    G(\omega)=\left[A^{-1}B\right](\omega)=\left[
         \begin{array}{ccc}
           \frac{(a^{2}-1)v}{a^{2}v^{2}-1} & \frac{(a^{2}+1)\rho}{1-a^{2}v^{2}} & 0 \\
           \frac{a^{2}(1-v^{2})^{2}}{(a^{2}+1)\rho(a^{2}v^{2}-1)} & \frac{(a^{2}-1)v}{a^{2}v^{2}-1} & 0 \\
           0 & 0 & v \\
         \end{array}
       \right].
  \end{displaymath}
  We look for the roots of the characteristic polynomial,
  \begin{displaymath}
    0=\textrm{Det}\left[G(\omega)-\lambda
    I\right]=\frac{(v-\lambda)(\lambda (-1+av)-a+v)(-a-v+\lambda
    (1+av))}{(av-1)(1+av)}.
  \end{displaymath}
  There are three values of $\lambda$ that make the numerator zero,
  \begin{equation}\label{Characteristic-Speeds}
    \lambda_{1}=\frac{v-a}{1-va}, \phantom{3333} \lambda_{2}=v,
    \phantom{3333} \lambda_{3}=\frac{v+a}{1+va}.
  \end{equation}

  We show for $0<a<1$ and $-1<v<1$,
  \begin{displaymath}
    \lambda_{1}<\lambda_{2}<\lambda_{3}.
  \end{displaymath}
  Indeed,
  \begin{displaymath}
    v^{2}<1 \Longleftrightarrow -av^{2}>-a \Longleftrightarrow
    v-av^{2}>v-a.
  \end{displaymath}
  By the restrictions on $v$ and $a$, $(1-av)>0$ and thus,
  \begin{displaymath}
    v(1-av)>v-a \Longleftrightarrow v>\frac{v-a}{1-av}.
  \end{displaymath}
  Showing $v<(v+a)/(1+va)$ is similar, we omit the details.  We
  conclude that for $\rho>0$, $-1<v<1$, and $S>0$ the system
  \eqref{System-Conservation} is strictly hyperbolic.

  Now, we show that the first and third
  characteristic fields are genuinely nonlinear and the second is linearly
  degenerate.  To do this we need to find the eigenvectors of $G(\omega)$.
  For $\lambda_{2}$ we simply find
  \begin{displaymath}
   R_{2}(\rho,v,S)=(0,0,1)^{T},
  \end{displaymath}
  and after some work,
  \begin{displaymath}
    R_{1}(\rho,v,S)=\left(-\frac{(a^{2}+1)\rho}{a(1-v^{2})},1,0\right)^{T}
  \end{displaymath}
  and
  \begin{displaymath}
    R_{3}(\rho,v,S)=\left(\frac{(a^{2}+1)\rho}{a(1-v^{2})},1,0\right)^{T}.
  \end{displaymath}
  Computing the gradients of the eigen-fields with respect to $\omega=(\rho,v,S)$,
  \begin{eqnarray}
    \nabla\lambda_{1} & = & \left(0,\frac{1-a^{2}}{(1-av)^{2}},0\right),\nonumber\\
    \nabla\lambda_{2} & = & \left(0,1,0\right),\nonumber\\
    \nabla\lambda_{3} & = & \left(0,\frac{1-a^{2}}{(1+av)^{2}},0\right).\nonumber
  \end{eqnarray}
  Thus,
  \begin{eqnarray}
    R_{1}\cdot\nabla\lambda_{1} & = & \frac{1-a^{2}}{(1-av)^{2}}\neq 0,\nonumber\\
    R_{2}\cdot\nabla\lambda_{2} & = & 0,\nonumber\\
    R_{3}\cdot\nabla\lambda_{3} & = & \frac{1-a^{2}}{(1+av)^{2}}\neq 0.\nonumber
  \end{eqnarray}
  The first and third are non-zero and bounded by the restrictions
  on $a$ and $v$.
\end{proof}

It is interesting to note that the eigenvalues
\eqref{Characteristic-Speeds} are the relativistic analog of the sum
of the local sound speed and fluid velocity in the classical Euler
equations. In the classical case, the first and third characteristic
fields have eigenvalues $\lambda_{1}=u-c$ and $\lambda_{3}=u+c$,
which are the sum and differences of the fluid speed and the local
speed of sound respectively. The eigenvalues
\eqref{Characteristic-Speeds} are exactly the relativistic sum of
two velocities within the frame work of relativity.

\bigskip

\section{Riemann Invariants}

    The Riemann invariants for the system \eqref{System-Conservation}
can be found from the eigenvectors. An
\textbf{$i^{\textrm{th}}-$Riemann invariant} is a function $\psi$
such that
\begin{displaymath}
  R_{i}\cdot\nabla\psi=0.
\end{displaymath}
In other words, the level curves of $\psi$ are the integral curves
of the $i^{\textrm{th}}$ characteristic field.  We will perform our
interaction estimate analysis in the coordinate system of Riemann
invariants because the rarefaction curves have particularly simple
structure; straight lines parallel to the coordinate axes.  From the
eigenvector $R_{1}$ we see that along $1-$rarefaction curves,
\begin{displaymath}
  \frac{d\rho}{dv}=-\frac{a^{2}+1}{a}\frac{\rho}{1-v^{2}},
\end{displaymath}
which we can explicitly solve to find that along the first integral
curve,
\begin{displaymath}
  \frac{a}{a^{2}+1}\ln(\rho)+\frac{1}{2}\ln\left(\frac{1+v}{1-v}\right)=\textrm{const.}
\end{displaymath}
This can be done for the third integral curve in a similar fashion.
We therefore define:
\begin{eqnarray}\label{Riemann-Invariant-rs}
  r=\frac{1}{2}\ln\left(\frac{1+v}{1-v}\right)-\frac{a}{1+a^{2}}\ln(\rho),\nonumber\\
  s=\frac{1}{2}\ln\left(\frac{1+v}{1-v}\right)+\frac{a}{1+a^{2}}\ln(\rho).
\end{eqnarray}

The function $r=r(\rho,v)$ is constant across $3-$rarefaction waves
and $s=s(\rho,v)$ is constant across $1-$rarefaction waves. From the
supplemental equation \eqref{Entropy-Conserved}, we see that the
entropy, $S$, is a third Riemann invariant constant across $1$ and
$3-$rarefaction waves. In our analysis, we will view state space in
the coordinates of the Riemann invariants rather than the conserved
variables. However, using $S$ is not sufficient because the shock
curves in $(r,s,S)$ space are, in general, not translationally
invariant.  Instead we will use $\Sigma=\ln(A(S))$ as our third
coordinate. It will be shown in Section
\ref{Equations-of-State-Section} that in $(r,s,\Sigma)$ space, the
shock-rarefaction curves are indeed independent of base point. Since
$S$ is a Riemann invariant, $\ln(A(S))$ must be one too. Indeed,
suppose that $\psi$ is a $i^{\textrm{th}}-$Riemann invariant and let
$f\in\mathcal{C}^{1}(\mathbb{R},\mathbb{R})$. Then $f(\psi)$ is an
$i^{\textrm{th}}-$Riemann invariant as well since,
\begin{displaymath}
  R_{i}\cdot\nabla f(\psi)=f'(\psi)R_{i}\cdot\nabla\psi=0.
\end{displaymath}
We now change our variables from the conserved quantities
$(U_{1},U_{2},U_{3})$ to $(\rho,v,S)$.
\begin{proposition}\label{Change-Of-Variables}
In the region, $\rho>0$, $-1<v<1$, $S>0$, the mapping
$(\rho,v,S)\to(U_{1},U_{2},U_{3})$ is one-to-one, and the Jacobian
determinant of the map is both continuous and non-zero.
\end{proposition}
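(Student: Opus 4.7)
The plan is to exploit the triangular dependence on $S$ that emerges in the ultra-relativistic limit. From $\rho = n\epsilon$ and $\epsilon = A(S)n^{\gamma-1}$, one has $n = (\rho/A(S))^{1/\gamma}$, and using $p=a^{2}\rho$ the coordinates \eqref{Conserved-Variables} become
$$U_{1} = \frac{n}{\sqrt{1-v^{2}}}, \qquad U_{2} = \frac{(1+a^{2})\rho v}{1-v^{2}}, \qquad U_{3} = \frac{\rho(1+a^{2}v^{2})}{1-v^{2}}.$$
Thus $U_{2}$ and $U_{3}$ depend only on $(\rho,v)$, while all the $S$-dependence is carried by $U_{1}$ through $n$.

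To establish injectivity, first I would invert the planar map $(\rho,v)\mapsto(U_{2},U_{3})$. The ratio $U_{2}/U_{3} = (1+a^{2})v/(1+a^{2}v^{2})$ depends on $v$ alone; its derivative is $(1+a^{2})(1-a^{2}v^{2})/(1+a^{2}v^{2})^{2}$, strictly positive on $(-1,1)$ for $0<a<1$, with limits $\pm 1$ at $v=\pm 1$. Hence this ratio is a strictly increasing bijection from $(-1,1)$ onto itself, so $v$ is uniquely recovered from $U_{2}/U_{3}$, and then $\rho = (1-v^{2})U_{3}/(1+a^{2}v^{2})$ is determined. Given $(\rho,v)$, I recover $n = U_{1}\sqrt{1-v^{2}}$ and then $A(S) = \rho/n^{\gamma}$, and by \eqref{A3} (strict monotonicity of $A$ on $\mathbb{R}^{+}$) the value of $S$ is uniquely determined.

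For the Jacobian, since $U_{2}$ and $U_{3}$ are independent of $S$, the determinant block-decomposes as
$$\det\frac{\partial(U_{1},U_{2},U_{3})}{\partial(\rho,v,S)} = \frac{\partial U_{1}}{\partial S}\,\det\begin{pmatrix} \partial_{\rho}U_{2} & \partial_{v}U_{2} \\ \partial_{\rho}U_{3} & \partial_{v}U_{3} \end{pmatrix}.$$
A direct computation of the $2\times 2$ block, combined with the algebraic identity $(1+v^{2})(1+a^{2}v^{2}) - 2v^{2}(1+a^{2}) = (1-v^{2})(1-a^{2}v^{2})$, gives $-(1+a^{2})(1-a^{2}v^{2})\rho/(1-v^{2})^{2}$, which is nonzero on the domain. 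A chain-rule calculation yields $\partial U_{1}/\partial S = -nA'(S)/(\gamma A(S)\sqrt{1-v^{2}})$, nonzero by \eqref{A3} and $n>0$. Continuity of every entry of the Jacobian is then immediate from \eqref{A2} and the smoothness of the remaining algebraic expressions in $(\rho,v)$.

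The main obstacle is the algebraic collapse of the $2\times 2$ Jacobian: several monomials in $v$ must telescope via the identity above, and once the factor $(1-v^{2})(1-a^{2}v^{2})$ is spotted the rest is bookkeeping. A minor subtlety is verifying that the inverse ratio map genuinely lands in $(-1,1)$; this follows from strict monotonicity together with the computed boundary limits of $f(v)=(1+a^{2})v/(1+a^{2}v^{2})$.
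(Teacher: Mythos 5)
Your proof is correct and takes a genuinely different route from the paper on both counts. For injectivity, the paper argues by contradiction: it supposes two triples have the same image and pushes the algebra through until it forces $(v_{1}-v_{2})(a^{2}v_{1}v_{2}-1)=0$, then separately shows $\partial n/\partial S\neq 0$ by differentiating the second law of thermodynamics. You instead invert the map outright by noticing that $U_{2}/U_{3}=(1+a^{2})v/(1+a^{2}v^{2})$ is a strictly increasing function of $v$ alone, then peel off $\rho$, $n$, and finally $S$ via the monotonicity of $A$. This is cleaner and constructive --- it exhibits the inverse rather than merely ruling out collisions --- and it makes the ``triangular'' structure of the coordinate change visible from the start. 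For the Jacobian, the paper computes the full $3\times 3$ determinant (using the thermodynamic identity $\partial n/\partial S=-n^{2}T/((a^{2}+1)\rho)$) and reports the answer in terms of the temperature, $\det J = n^{2}T(1-a^{2}v^{2})/(1-v^{2})^{2}$; you instead expand along the $S$-column, reducing the problem to the $2\times 2$ block in $(\rho,v)$ plus the single factor $\partial U_{1}/\partial S$, and express the answer directly in terms of $A$, $A'$, $\gamma$. The two formulas agree after substituting $T=A'(S)\rho/(A(S)n)$ and $\gamma=1+a^{2}$, up to an apparent typo in the paper's displayed entry $\partial U_{1}/\partial S$ (it seems to drop a factor of $(1-v^{2})^{-1/2}$), so the exponent in the paper's final determinant should read $(1-v^{2})^{5/2}$ rather than $(1-v^{2})^{2}$ --- your bookkeeping is the more careful of the two. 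Either way the conclusion (continuous, nonvanishing Jacobian) is unaffected. One small presentational point: the observation that $f(v)=U_{2}/U_{3}$ maps $(-1,1)$ onto $(-1,1)$ is not needed for injectivity (strict monotonicity alone suffices there), but it is a nice extra because it also characterizes the image of the map.
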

\begin{proof}
We will show first that the map $(\rho,v)\rightarrow(U_{2},U_{3})$
is one-to-one for $\rho>0$ and $-1<v<1$.  Assume the contrary.
Suppose we have $(\rho_{1},v_{1})$ and $(\rho_{2},v_{2})$ such that
$U_{2}(\rho_{1},v_{1})=U_{2}(\rho_{2},v_{2})$ and
$U_{3}(\rho_{1},v_{1})=U_{3}(\rho_{2},v_{2})$.   To begin we show
that if $v_{1}=v_{2}=v$ then $\rho_{1}=\rho_{2}$. From the equality
$U_{3}(\rho_{1},v)=U_{3}(\rho_{2},v)$ we have
  \begin{displaymath}
    \rho_{1}\left((a^{2}+1)\frac{v^{2}}{1-v^{2}}+1\right)=\rho_{2}\left((a^{2}+1)\frac{v^{2}}{1-v^{2}}+1\right).
  \end{displaymath}
  Since the term
  \begin{displaymath}
    \left((a^{2}+1)\frac{v^{2}}{1-v^{2}}+1\right)\neq 0
  \end{displaymath}
  for any $-1<v<1$ we must have $\rho_{1}=\rho_{2}$.
  We now show that if the images of $U_{2}$ and $U_{3}$ are equal,
  then we must have $v_{1}=v_{2}$
  and, by the previous argument, $\rho_{1}=\rho_{2}$.

  From $U_{3}(\rho_{1},v_{1})=U_{3}(\rho_{2},v_{2})$ and $U_{2}(\rho_{1},v_{1})=U_{2}(\rho_{2},v_{2})$ we have
  \begin{equation}\label{Coordinate-One-To-One}
    \frac{\rho_{1}}{\rho_{2}}\left((a^{2}+1)\frac{v_{1}^{2}}{1-v_{1}^{2}}+1\right)=\left((a^{2}+1)\frac{v_{2}^{2}}{1-v_{2}^{2}}+1\right)
  \end{equation}
  and
  \begin{displaymath}
    \frac{\rho_{1}}{\rho_{2}}=\left(\frac{v_{1}}{1-v_{1}^{2}}\right)^{-1}\left(\frac{v_{2}}{1-v_{2}^{2}}\right).
  \end{displaymath}
  Note that if $v_{1}/(1-v_{1}^{2})=0$ we must also have $v_{1}=0$ and
  by \eqref{Coordinate-One-To-One}, $v_{2}=0$ since $\rho_{1}/\rho_{2}\neq
  0$. Assume that $v_{1}\neq 0$.

  Replacing $\rho_{1}/\rho_{2}$ in \eqref{Coordinate-One-To-One} and simplifying,
  \begin{displaymath}
    \frac{a^{2}v_{1}^{2}+1}{v_{1}}=\frac{a^{2}v_{2}^{2}+1}{v_{2}},
  \end{displaymath}
  which further reduces to
  \begin{displaymath}
    (v_{1}-v_{2})(a^{2}v_{1}v_{2}-1)=0.
  \end{displaymath}
Since $|a|,|v_{1}|,|v_{2}|<1$, the second term,
$(a^{2}v_{1}v_{2}-1)\neq 0$, so it must be $v_{1}=v_{2}$. Therefore,
the mapping $(\rho,v)\longleftrightarrow (U_{2},U_{3})$ is
one-to-one.

  Now we show that the mapping $(\rho,v,S)\rightarrow (U_{1},U_{2},U_{3})$ is
  one-to-one.  Proceed again by contradiction by supposing
  $(\rho_{1},v_{1},S_{1})$ and $(\rho_{2},v_{2},S_{2})$ have the
  same image.  Since $U_{2}$ and $U_{3}$ only depend on $\rho$ and
  $v$, the previous argument shows that $\rho_{1}=\rho_{2}$ and
  $v_{1}=v_{2}$.  We now show that $S_{1}=S_{2}$.  Since
  $n=n(\rho,S)$ the equality
  $U_{1}(\rho_{1},v_{1},S_{1})=U_{1}(\rho_{2},v_{2},S_{2})$ reduces
  to
  \begin{displaymath}
    n(\rho,S_{1})=n(\rho,S_{2}).
  \end{displaymath}
  Therefore, we are done if $\partial n/\partial S\neq 0$.  We use the
  fact that $\rho=n\epsilon$ to rewrite the second law of
  thermodynamics \eqref{Thermodynamics} as
  \begin{displaymath}
    nd\rho=n^{2}TdS+(a^{2}+1)\rho dn.
  \end{displaymath}
  Therefore,
  \begin{displaymath}
    \frac{\partial n}{\partial S}=-\frac{n^{2}T}{(a^{2}+1)\rho}\neq
    0,
  \end{displaymath}
  and the mapping $(\rho,v,S)\rightarrow(U_{1},U_{2}U_{3})$ is one-to-one.

  The jacobian matrix of the map is given by
  \begin{displaymath}
  J=\left(
           \begin{array}{ccc}
             \frac{1}{(a^{2}+1)\sqrt{1-v^{2}}\epsilon}  & (a^{2}+1)\frac{v}{1-v^{2}} &  (a^{2}+1)\frac{v^{2}}{1-v^{2}}+1\\
             \frac{nv}{(1-v^{2})^{3/2}} & (a^{2}+1)\frac{\rho(1+v^{2})}{(1-v^{2})^{2}} & (a^{2}+1)\frac{2\rho v}{(1-v^{2})^{2}} \\
             \frac{-n^{2}T}{(1+a^{2})\rho} & 0 & 0 \\
           \end{array}
         \right),
  \end{displaymath}
  whose determinant is
  \begin{displaymath}
    \textrm{det}(J)=\frac{n^{2}T(1-a^{2}v^{2})}{(1-v^{2})^{2}}>0,
  \end{displaymath}
  which is continuous on $\rho>0$, $-1<v<1$ and $S>0$.

\end{proof}

\bigskip

\section{Jump Conditions}

    Systems of conservation laws, or more specifically the relativistic
Euler equations \eqref{System-Conservation}, encode the required
information to calculate the evolution of discontinuities, i.e.
shock waves, in one or more of the conserved variables.  One must
use care however, because systems of equations equivalent to
\eqref{System-Conservation} for smooth solutions can, and typically
do not, give the same relations for discontinuous solutions.  A
prime example of this is specific entropy is constant along flow
lines for continuous solutions of \eqref{System-Conservation} from
\eqref{Entropy-Conserved}, but entropy is not conserved and
increases across a shock front.

For systems of conservation laws, the relations defining the
dynamics of shock waves are the Rankine-Hugoniot jump conditions.
These relations state for a shock wave traveling at speed $s$, the
change in the conserved quantities $U$ across the shock and the
change in $F(U)$ across the shock, denoted $[[U]]$ and $[[F(U)]]$
respectively, satisfy,
\begin{equation}\label{Rankine-Hugoniot}
  s [[U]]=[[F(U)]].
\end{equation}
For a given state $U_{L}$, the Rankine-Hugoniot relations, for each
$i=1,\ldots,n,$ define a $1-$parameter family of states that can be
connected on the right by a shock wave in the $i^{th}$
characteristic family.  Moreover, this curve has second order
contact with the curve defining all the states that connect to
$U_{L}$ on the right by an $i^{th}$ rarefaction wave given by the
$i^{th}$ integral curve. These facts were first proven by Lax in
$1957$ for a general system of strictly hyperbolic conservation laws
with genuinely nonlinear or linearly degenerate characteristic
fields, \cite{Lax}.

We call $\mathcal{R}_{i}(U)$ the integral curve of the $i^{th}$
characteristic field that passes through the state $U$ and
$\mathcal{S}_{i}(U)$ the one parameter family of states defined by
\eqref{Rankine-Hugoniot} that defines states that connect to $U$ by
a shock wave in the $i^{th}$ family.  Only half of each of these
curves will be physically relevant.  For a genuinely non-linear
characteristic field we take the portion of $\mathcal{R}_{i}(U)$
extending from $U$ that satisfies $\lambda_{i}(U)<\lambda_{i}(U')$.
Call this portion $\mathcal{R}^{+}_{i}(U)$.  On the other hand, take
the portion of the shock curve $\mathcal{S}_{i}(U)$ that satisfies
the Lax entropy condition,
\begin{displaymath}
  \lambda_{i}(U')<s<\lambda_{i}(U).
\end{displaymath}
Call this portion $\mathcal{S}^{-}_{i}(U)$.  Finally, define
$\mathcal{T}_{i}(U)=\mathcal{R}^{+}_{i}(U)\cup
\mathcal{S}^{-}_{i}(U)$.

For our system given by \eqref{System-Conservation}, we have that
the tangent to the worldline of the shock front is proportional to
$(1,s)$. Define $l^{\alpha}$ by
\begin{displaymath}
  (l^{0},l^{1})=(1,s).
\end{displaymath}

The jump conditions \eqref{Rankine-Hugoniot} for the system
\eqref{System-Divergence} is then given by
\begin{eqnarray}\label{Jump-Conditions}
  \left[\left[nu^{\alpha}\right]\right]l_{\alpha}=0,\phantom{3333333}\nonumber \\
  \left[\left[T^{\alpha \beta}\right]\right] l_{\alpha}=0, \phantom{11}
  \beta=0,1.
\end{eqnarray}
Recall that $l_{\alpha}$ is found by contracting $l^{\alpha}$ with
the metric $\eta$:
\begin{displaymath}
  l_{\alpha}=l^{\beta}\eta_{\alpha\beta}
\end{displaymath}
From the first equation in \eqref{Jump-Conditions} we have for some
constant $m$,
\begin{equation}\label{Shock-Wave-Mass}
  m=nu^{\alpha}l_{\alpha}=n_{L}u_{L}^{\alpha}l_{\alpha}.
\end{equation}
For the case $m=0$, we have for $n,n_{L}>0$,
\begin{displaymath}
  u^{\alpha}l_{\alpha}=u^{\alpha}_{L}l_{\alpha}.
\end{displaymath}
Since the components $u^{\alpha}$ are in a one-to-one relation with
the fluid velocity $v$, we have $v=v_{L}$.  Furthermore, the second
equation reduces to $p=p_{L}$.  This case, $m=0$, corresponds to an
entropy wave rather than a compressive shock.  Shock waves will
correspond to $m\neq 0$.  The thermodynamic relationships across a
shock wave in a solution to the relativistic Euler equations was
first given by Taub, \cite{Taub-Rel_RankHugo}.
\begin{proposition}[Taub, $1948$]\label{Hugoniot}
 Let $U=(\rho,v,n)$ and $U_{L}=(\rho_{L},v_{L},n_{L})$ be two states separated by a shock wave.  Then the
 following relation holds:
 \begin{equation}\label{Taub-Adiabat}
   \frac{\rho+p}{n^{2}}\left(\rho+p_{L}\right)=\frac{\rho_{L}+p_{L}}{n_{L}^{2}}\left(\rho_{L}+p\right).
 \end{equation}
\end{proposition}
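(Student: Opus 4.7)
The plan is to derive the Taub adiabat by contracting the momentum–energy jump condition with the two fluid four-velocities and equating the resulting expressions for $u^{\alpha}u_{L\alpha}$. Since the case $m=0$ has already been identified as an entropy wave, we assume $m\neq 0$ throughout.

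First, I would expand the jump condition $[[T^{\alpha\beta}]]l_{\alpha}=0$ using $T^{\alpha\beta}=(\rho+p)u^{\alpha}u^{\beta}+p\eta^{\alpha\beta}$ and the conservation of particle number across the shock, which gives $m=nu^{\alpha}l_{\alpha}=n_{L}u_{L}^{\alpha}l_{\alpha}$. Factoring out $u^{\alpha}l_{\alpha}=m/n$ on each side and raising the free index via $\eta^{\alpha\beta}l_{\alpha}=l^{\beta}$, one arrives at the key vector identity
\begin{equation*}
\frac{\rho+p}{n}\,u^{\beta}-\frac{\rho_{L}+p_{L}}{n_{L}}\,u_{L}^{\beta}=\frac{p_{L}-p}{m}\,l^{\beta}.
\end{equation*}
This is the single relation from which the Taub adiabat will drop out algebraically.

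Next, I would contract this identity twice: once with $u_{\beta}$ and once with $u_{L\beta}$. Using the normalizations $u^{\beta}u_{\beta}=u_{L}^{\beta}u_{L\beta}=-1$ together with $l^{\beta}u_{\beta}=m/n$ and $l^{\beta}u_{L\beta}=m/n_{L}$ (which follow directly from the definition of $m$), the two contractions simplify to
\begin{equation*}
u_{L}^{\beta}u_{\beta}=-\frac{n_{L}(\rho+p_{L})}{n(\rho_{L}+p_{L})},\qquad u^{\beta}u_{L\beta}=-\frac{n(\rho_{L}+p)}{n_{L}(\rho+p)}.
\end{equation*}
The pressures $p$ and $p_{L}$ that appear in the numerators are produced precisely by the mechanism $-(\rho+p)+(p_{L}-p)=-(\rho+p_{L})$ (and symmetrically for the $L$ side), which is where the mixed adiabat structure is generated.

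Finally, since the scalar $u^{\beta}u_{L\beta}$ is symmetric in the two states, equating the two expressions above and cross-multiplying yields
\begin{equation*}
\frac{(\rho+p)(\rho+p_{L})}{n^{2}}=\frac{(\rho_{L}+p_{L})(\rho_{L}+p)}{n_{L}^{2}},
\end{equation*}
which is exactly \eqref{Taub-Adiabat}. The only real subtlety I anticipate is bookkeeping: keeping the Minkowski index raisings and lowerings straight and verifying $l^{\beta}u_{\beta}=l_{\alpha}u^{\alpha}$ so that $m/n$ and $m/n_{L}$ are correctly identified after the contractions. Once that is clear, no thermodynamic input beyond $\rho=n(1+\epsilon)$ and the form of $T^{\alpha\beta}$ is needed.
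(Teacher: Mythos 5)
Your proposal is correct and follows essentially the same route as the paper: both arguments reduce the energy--momentum jump condition (after dividing out $m$) to a single vector identity, then contract it with $u_{\beta}$ and with $u_{L\beta}$, using $u^{\beta}u_{\beta}=u_{L}^{\beta}u_{L\beta}=-1$ and $l^{\beta}u_{\beta}=m/n$, $l^{\beta}u_{L\beta}=m/n_{L}$. The only difference is the final bookkeeping: the paper solves for $u_{L}^{\beta}u_{\beta}$ from one contraction, substitutes into the other, and simplifies a quadratic identity, whereas you isolate the frame-invariant scalar $u^{\beta}u_{L\beta}$ from each contraction separately and equate the two expressions, which is a slightly cleaner way to land directly on \eqref{Taub-Adiabat}.
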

\begin{proof}
  We will show that across a shock wave the following condition
  holds on the two separating states,
  \begin{equation}\label{Taub-Adiabat-Alt}
    \left(\frac{p+\rho}{n}\right)^{2}-\left(\frac{p_{L}+\rho_{L}}{n_{L}}\right)^{2}+(p_{L}-p)\left(\frac{p+\rho}{n}+\frac{p_{L}+\rho_{L}}{n_{L}}\right)=0.
  \end{equation}

Assuming this holds, we multiply out, cancel and collect terms with
$n$ and $n_{L}$ in the denominator on the left and right
respectively to get
\begin{displaymath}
  \frac{\rho^{2}+p\rho+\rho
  p_{L}+pp_{L}}{n^{2}}=\frac{\rho_{L}^{2}+p_{L}\rho_{L}+pp_{L}+p_{L}\rho}{n_{L}^{2}}.
\end{displaymath}
Equation \eqref{Taub-Adiabat} follows directly.

For convenience define
\begin{displaymath}
    g=\frac{p+\rho}{n} \phantom{444} \textrm{and} \phantom{444}
    g_{L}=\frac{p_{L}+\rho_{L}}{n_{L}}.
\end{displaymath}
If $m=0$ we have a jump discontinuity.  Since we are concerned about
the shock waves, assume $m\neq 0$.  In this case the second equation
in \eqref{Jump-Conditions} gives
\begin{displaymath}
  ngu^{\alpha}u^{\beta}l_{\alpha}+p\eta^{\alpha\beta}l_{\alpha}=n_{L}g_{L}u_{L}^{\alpha}u_{L}^{\beta}l_{\alpha}+p_{L}\eta^{\alpha\beta}l_{\alpha},
\end{displaymath}
that, in light of \eqref{Shock-Wave-Mass}, reduces to
\begin{equation}\label{Jump-Condition-T-Mass}
  mgu^{\beta}+pl^{\beta}=mg_{L}u_{L}^{\beta}+p_{L}l^{\beta}.
\end{equation}
Contracting equation \eqref{Jump-Condition-T-Mass} with $u_{\beta}$
and $u_{L_{\beta}}$ then using \eqref{Shock-Wave-Mass} we find
\begin{equation}\label{Taub-1}
  -g+\frac{p}{n}=g_{L}u^{\beta}_{L}u_{\beta}+\frac{p_{L}}{n}
\end{equation}
and
\begin{equation}\label{Taub-2}
  gu^{\beta}u_{L_{\beta}}+\frac{p}{n_{L}}=-g_{L}+\frac{p_{L}}{n_{L}}.
\end{equation}
We use \eqref{Taub-1} to solve for $u_{L}^{\beta}u_{\beta}$:
\begin{equation}\label{Taub-ulu}
    u_{L}^{\beta}u_{\beta}=\frac{1}{g_{L}}\left(-g+\frac{p}{n}-\frac{p_{L}}{n}\right).
\end{equation}
Plugging \eqref{Taub-ulu} into \eqref{Taub-2}, combining and using
the definition of $g$ and $g_{L}$, we obtain
\eqref{Taub-Adiabat-Alt}.
\end{proof}

In particular, with $p=a^{2}\rho$, \eqref{Taub-Adiabat} reduces to
\begin{equation}\label{Taub-pa2rho}
  \frac{n^{2}}{n_{L}^{2}}=\frac{\rho^{2}}{\rho_{L}^{2}}\frac{\left(1+a^{2}\frac{\rho_{L}}{\rho}\right)}{\left(1+a^{2}\frac{\rho}{\rho_{L}}\right)}.
\end{equation}

The global structure of the solutions of the shock relations
\eqref{Rankine-Hugoniot} for the relativistic Euler equations in the
space of Riemann invariants was first done by Smoller and Temple for
an equation of state of the form \eqref{p-a2rho},
\cite{Smoller-Temple-Global-Solutions-Rel-Euler}. We summarize their
results in the following lemma:

\begin{lemma}[Smoller, Temple, $1993$]\label{Lemma-Smoller-Temple}
  Let $p=a^{2}\rho$ with $0<a<1$.  The projection of the $i$-shock
  curves for $i=1,3$ onto the plane of Riemann invariants $(r,s)$ at any entropy level satisfy the
  following:
  \begin{enumerate}
    \item The shock speed $s$ is monotonically increasing or decreasing along the
    shock curve $\mathcal{S}_{i}$ and for each state
    $(\rho_{L},v_{L})\neq (\rho_{R},v_{R})$ on $\mathbf{S}_{i}$ the
    Lax entropy condition holds:
    \begin{displaymath}
      \lambda_{i}(\rho_{R},v_{R})<s_{i}<\lambda_{i}(\rho_{L},v_{L}).
    \end{displaymath}

    \item The shock curves, when parameterized by $\Delta\ln(\rho)$, are translationally invariant.
    Furthermore the $1$ and $3-$shock curves based at a common point $(\overline{r},\overline{s})$
    have mirror symmetry across the line $r=s$ through the point
    $(\overline{r},\overline{s})$.

    \item The $i-$shock curves are convex and
    \begin{displaymath}
      0\leq \frac{ds}{dr}\leq \frac{\sqrt{2K}-1}{-\sqrt{2K}-1}<1
    \end{displaymath}
    for $i=1$ and
    \begin{displaymath}
      0\leq \frac{dr}{ds}\leq \frac{\sqrt{2K}-1}{-\sqrt{2K}-1}<1
    \end{displaymath}
    for $i=3$ where $K=2a^{2}/(1+a^{2})^{2}$.
  \end{enumerate}
\end{lemma}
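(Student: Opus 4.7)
The plan is to work in the projected $(\rho,v)$-plane, since with $p=a^2\rho$ the second and third equations of \eqref{System-Conservation} decouple from the particle-number equation and from $S$. The main input will be the Rankine-Hugoniot relations \eqref{Jump-Conditions}. I would contract the two energy-momentum jump conditions with $u_\alpha$ and with $u_{L\,\alpha}$ separately (in the spirit of the derivation of \eqref{Taub-Adiabat-Alt}), eliminate the shock speed $s$, and specialize to $p=a^2\rho$ in order to obtain a single implicit equation $H(\rho,v;\rho_L,v_L)=0$ cutting out the projected shock locus $\mathcal{S}_i$.

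For part (2), I would rewrite this implicit equation in the Riemann-invariant coordinates from \eqref{Riemann-Invariant-rs}. The decisive observation is that the inverse change of variables gives $v=\tanh\!\bigl(\tfrac{r+s}{2}\bigr)$ and $\ln\rho=\tfrac{1+a^2}{2a}(s-r)$, so that the velocity quantity appearing in the jump condition satisfies the relativistic addition formula
\begin{displaymath}
\tfrac{1}{2}\ln\!\left(\tfrac{(1+v)(1-v_L)}{(1-v)(1+v_L)}\right)=\tfrac{(r-r_L)+(s-s_L)}{2},
\end{displaymath}
while density ratios $\rho/\rho_L$ become pure differences in $s-r$. Plugging these into $H=0$ should collapse the relation to the form $\tilde H(r-r_L,\,s-s_L)=0$, which is exactly translational invariance when parameterized by $\tau:=\Delta\ln\rho$. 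The mirror symmetry between the $1$- and $3$-shock curves across $r=s$ through the base point then follows from the symmetry $v\mapsto -v$ of the equations, which swaps $r\leftrightarrow -s$ (after recentering) and interchanges the two nonlinear families.

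For part (1), I would parameterize $\mathcal{S}_i$ by $\tau=\ln(\rho/\rho_L)$ and solve the momentum jump equation directly for $s$ as a function of $\tau$; differentiating and using $0<a<1$ together with $-1<v,v_L<1$ should give strict monotonicity in $\tau$ on each half-curve. Substituting the explicit formulas for $\lambda_1,\lambda_3$ from \eqref{Characteristic-Speeds} on either side of the discontinuity and again invoking relativistic velocity addition reduces the Lax inequalities $\lambda_i(U_R)<s<\lambda_i(U_L)$ to algebraic inequalities in $\tau$, which pick out the compressive (admissible) half $\mathcal{S}_i^-$ and exclude the expansive half.

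For part (3), I would differentiate $\tilde H(\Delta r,\Delta s)=0$ implicitly in $\tau$; by part (2) the resulting $ds/dr$ depends only on $\tau$, not on the base point, and a second differentiation gives convexity. The sharp value $(\sqrt{2K}-1)/(-\sqrt{2K}-1)$ with $K=2a^2/(1+a^2)^2$ should arise as the asymptotic slope in the infinite-strength limit $\tau\to -\infty$, with the monotonicity from the convexity showing it is never attained on $\mathcal{S}_i^-$. The main obstacle I expect is the algebraic simplification in part (2): the Taub-style relation emerging from $H=0$ is transcendental in both $v$ and $\rho$, and packaging it as a function of only $\Delta r$ and $\Delta s$ requires deploying the relativistic velocity-addition identity in exactly the right place; once translation invariance is in hand, parts (1) and (3) reduce to bookkeeping at a single convenient base point.
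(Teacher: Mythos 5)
The paper does not prove Lemma~\ref{Lemma-Smoller-Temple}: it is presented explicitly as a summary of results from Smoller and Temple~\cite{Smoller-Temple-Global-Solutions-Rel-Euler}, so there is no in-text argument to compare yours against. That said, your sketch reconstructs the mechanism that drives the cited theorem. The decisive point you identify — that when $p=a^2\rho$ the Hugoniot locus depends on $(v,v_L)$ only through the relativistic velocity difference $\tfrac12\ln\!\bigl(\tfrac{(1+v)(1-v_L)}{(1-v)(1+v_L)}\bigr)=\tfrac12(\Delta r+\Delta s)$ and on $(\rho,\rho_L)$ only through $\ln(\rho/\rho_L)$, which is proportional to $\Delta s-\Delta r$ — is exactly the Lorentz-invariance observation Smoller and Temple exploit: a boost translates $r+s$, a density rescaling translates $s-r$, and the scalar relation $p=a^2\rho$ is invariant under both, so the shock curve is rigid under translation in $(r,s)$. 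Your account of the mirror symmetry (parity $v\mapsto -v$ swapping $r\leftrightarrow -s$ and the two nonlinear families) and of the slope bound $\frac{\sqrt{2K}-1}{-\sqrt{2K}-1}=\bigl(\tfrac{1-a}{1+a}\bigr)^2$ arising as the asymptotic slope of the shock curve at infinite strength likewise match the structure of the reference.

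Two cautions, both of which you partly acknowledge. First, the claim that the implicit relation $H(\rho,v;\rho_L,v_L)=0$ collapses to $\tilde H(\Delta r,\Delta s)=0$ is where all the work lies; as written it is a plan, not a proof, and you need to actually carry out the elimination of the shock speed and of $n$ (via the Taub adiabat \eqref{Taub-pa2rho}) to confirm the surviving relation is a function of the two differences alone. Second, for part (3) you should make precise the direction of monotonicity: at zero strength $ds/dr=0$ because the shock curve has second-order contact with the rarefaction line $s=\mathrm{const}$, and convexity then makes $ds/dr$ monotone increasing in the strength parameter toward the asymptotic supremum, which is what actually yields $0\leq ds/dr<\bigl(\tfrac{1-a}{1+a}\bigr)^2$ on the admissible half $\mathcal{S}^-_i$. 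Convexity alone, without the anchor at the base point and the limit computation, would not pin down both ends of the inequality.
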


In light of Lemma \ref{Lemma-Smoller-Temple}, we see that we can
globally define the shock curves $\mathcal{S}_{i}(U)$ in the
$rs-$plane and we know that everywhere on this curve the Lax entropy
conditions hold. We now extend the analysis of Smoller and Temple
and show that the entropy change along the shock waves also possess
the translationally invariant property and are convex in a
particular coordinate system. After this we will show that the
Riemann problem is globally solvable with equation of state
\eqref{EOS-Family}, in the ultra-relativistic limit.

\bigskip

\section{Equations of State}\label{Equations-of-State-Section}
In this section, we will show certain properties hold for our family
of equations of state. Namely, we will need that as a function of
wave strength, the change in a certain function of entropy is
independent of base point. Moreover, we will find that the change of
this function of entropy and its derivative are monotone increasing.
We will use these facts in our estimates on the entropy waves in
Section \ref{Section-InteractionEstimates}.

    For an equation of state of the form
\begin{displaymath}
  \epsilon(n,S)=A(S)n^{\gamma-1},
\end{displaymath}
with $A$ satisfying \eqref{A1}, \eqref{A2} and \eqref{A3}, the
second law of thermodynamics says,
\begin{displaymath}
  p(n,S)=n^{2}\frac{\partial\epsilon}{\partial n}=(\gamma-1)A(S)n^{\gamma}=(\gamma-1)\epsilon n.
\end{displaymath}
In the ultra-relativistic limit this further reduces to
\begin{displaymath}
    p(n,S)=(\gamma-1)\rho,
\end{displaymath}
an equation of state of the form \eqref{p-a2rho} with
$a=\sqrt{\gamma-1}$.

Now we will show that a certain function of entropy across a shock
wave is independent of base state $(\rho_{L},v_{L},S_{L})$ by using
Proposition \ref{Hugoniot}. Choose $\Sigma$ by
\begin{equation}\label{Sigma}
  \Sigma(S)=\ln\left(A(S)\right).
\end{equation}
Our goal is to show that across a shock wave, the difference
$[\Sigma-\Sigma_{L}]$ is a function of the change of the
corresponding Riemann invariants alone. Then the difference
$[\Sigma-\Sigma_{L}]$ along the shock curve is independent of base
point. Finally, we will show that the difference
$[\Sigma-\Sigma_{L}]$ and its derivative, as a function of the
change of Riemann invariants, are monotone increasing. Later we will
measure the strength of $1-$shocks as the change in $r$ and by the
change in $s$ for $3-$shocks. It is sufficient to show that the
change $[\Sigma-\Sigma_{L}]$ and its derivative are monotone
increasing as viewed as a function of $\ln(\rho/\rho_{L})$, because
they satisfy the relationship as parameters,
\begin{displaymath}
  \Delta r=\frac{2a}{a^{2}+1}\Delta\ln(\rho).
\end{displaymath}
For $3-$Shocks we replace $\Delta r$ with $\Delta s$. Thus,
\begin{displaymath}
  \frac{d[S-S_{L}]}{d(r-r_{L})}=\frac{d[S-S_{L}]}{d\ln(\rho/\rho_{L})}\cdot\left|\frac{d\ln(\rho/\rho_{L})}{d(r-r_{L})}\right|=\frac{a^{2}+1}{2a}\cdot\frac{d[S-S_{L}]}{d\ln(\rho/\rho_{L})}.
\end{displaymath}

Using \eqref{Taub-pa2rho}, we calculate $[\Sigma-\Sigma_{L}]$,
\begin{eqnarray}
      \Sigma-\Sigma_{L} & = & \ln\left(A(S)\right)-\ln\left(A(S_{L})\right),\\\nonumber\\
       & = & \ln\left(\frac{\rho}{n^{\gamma}}\frac{n_{L}^{\gamma}}{\rho_{L}}\right),\nonumber\\\nonumber\\
       & = &
       (1-\gamma)\ln\left(\frac{\rho}{\rho_{L}}\right)-\frac{\gamma}{2}\ln\left(\frac{1+\left(\gamma-1\right)\frac{\rho}{\rho_{L}}}{1+\left(\gamma-1\right)\frac{\rho_{L}}{\rho}}\right).\nonumber
\end{eqnarray}
Thus for $\sigma=\ln(\rho/\rho_{L})$,
\begin{equation}\label{Sigma-Change}
  [\Sigma-\Sigma_{L}](\sigma)=(1-\gamma)\sigma+\frac{\gamma}{2}\ln\left(\frac{1+(\gamma-1)e^{\sigma}}{1+(\gamma-1)e^{-\sigma}}\right).
\end{equation}
After differentiating, we have
\begin{equation}
  \frac{d[\Sigma-\Sigma_{L}]}{d\sigma}=\frac{(e^{\sigma}-1)^{2}(2-\gamma)(\gamma-1)}{2(1+e^{\sigma}(\gamma-1))(e^{\sigma}+(\gamma-1))},
\end{equation}
which is non-negative in the domain $1<\gamma<2$ and $\sigma\geq 0$.
Furthermore, the derivative is zero only when $\sigma=0$. Thus,
$[S-S_{L}](\sigma)$ is a monotone increasing function.

Next we show that $d[\Sigma-\Sigma_{L}]/d\sigma$ is also monotone
increasing. We take another derivative and find
\begin{displaymath}
  \frac{d^{2}[\Sigma-\Sigma_{L}]}{d\sigma^{2}}=\frac{\gamma^{2}(2-\gamma)(\gamma-1)(e^{3\sigma}-e^{\sigma})}{2(1+e^{\sigma}(\gamma-1))^{2}(e^{\sigma}+(\gamma-1))^{2}}.
\end{displaymath}
The denominator is always positive and the numerator is positive
because $1<\gamma<2$ and $e^{3\sigma}\geq e^{\sigma}$ for
$\sigma\geq 0$.

We have proven the following proposition:

\begin{proposition}\label{EOS-Independent}
Consider the ultra-relativistic Euler equations with the equation of
state $\epsilon(n,S)=A(S)n^{\gamma-1}$ and $A$ satisfying
\eqref{A1}, \eqref{A2} and \eqref{A3}. Then the change in
$\Sigma=\ln(A(S))$, when regarded as a function of the change in the
corresponding Riemann invariant, is independent of base state.
Geometrically, the shock curves, as viewed in $(r,s,\Sigma)-$space,
are translationally invariant.
\end{proposition}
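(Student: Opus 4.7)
The plan is to derive an explicit formula for $[\Sigma-\Sigma_L]$ along a shock curve and show that it depends on the base state $(\rho_L,v_L,S_L)$ only through the ratio $\rho/\rho_L$. Since the Riemann invariants $r,s$ change along an $i$-shock by an amount proportional to $\ln(\rho/\rho_L)$ (as recorded in the parameter relation $\Delta r = \frac{2a}{a^2+1}\Delta\ln(\rho)$ for $1$-shocks, and analogously for $3$-shocks), translation invariance in $(r,s,\Sigma)$-space will follow immediately.

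First I would use the equation of state \eqref{EOS-Family} together with the ultra-relativistic reduction $\rho=n\epsilon$ to write
\begin{displaymath}
A(S)=\frac{\rho}{n^{\gamma}},
\end{displaymath}
so that $\Sigma=\ln(A(S))=\ln\rho-\gamma\ln n$. Consequently $\Sigma-\Sigma_L = \ln(\rho/\rho_L) - \gamma\ln(n/n_L)$, and the entire task reduces to expressing $n/n_L$ in terms of $\rho/\rho_L$ alone. This is precisely what the Taub adiabat provides: specializing Proposition \ref{Hugoniot} to $p=a^2\rho$ with $a^2=\gamma-1$ yields \eqref{Taub-pa2rho}, which depends only on the ratio $\rho/\rho_L$ and not on the individual base values. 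Substituting this into the formula above produces a closed-form expression for $[\Sigma-\Sigma_L]$ in terms of the single variable $\sigma := \ln(\rho/\rho_L)$.

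Next I would perform the algebraic simplification, collecting logarithms, to obtain the expression
\begin{displaymath}
[\Sigma-\Sigma_L](\sigma) = (1-\gamma)\sigma + \frac{\gamma}{2}\ln\!\left(\frac{1+(\gamma-1)e^{\sigma}}{1+(\gamma-1)e^{-\sigma}}\right).
\end{displaymath}
The crucial structural feature is that $\rho_L$ and $S_L$ have dropped out: the right-hand side depends on the base state only through $\sigma$. Combined with the fact that the projected shock curves of Lemma \ref{Lemma-Smoller-Temple} are already translationally invariant in $(r,s)$ when parameterized by $\Delta\ln(\rho)$, this establishes that the full three-dimensional shock curves are translates of one another in $(r,s,\Sigma)$-space.

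The routine calculational obstacle, and the only real computation to grind through, is verifying that $n/n_L$ depends only on $\rho/\rho_L$ from the Taub relation and then cleanly reducing the resulting logarithms; everything else is conceptual. A small additional remark worth including is that the monotonicity statements needed later in Section \ref{Section-InteractionEstimates} (namely that $[\Sigma-\Sigma_L]$ and its $\sigma$-derivative are both monotone increasing for $\sigma\geq 0$) follow from two direct differentiations of the closed-form expression above, using only the assumption $1<\gamma<2$ to control the signs of the resulting rational expressions in $e^{\sigma}$. I would record these two derivative computations at the end of the proof so that Proposition \ref{EOS-Independent} packages all the entropy-wave facts needed downstream.
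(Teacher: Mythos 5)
Your argument reproduces the paper's own proof: both start from $A(S)=\rho/n^{\gamma}$ via the ultra-relativistic reduction, invoke the Taub adiabat specialized to $p=a^{2}\rho$ (equation \eqref{Taub-pa2rho}) to eliminate $n/n_{L}$ in favor of $\rho/\rho_{L}$, and arrive at the same closed-form expression for $[\Sigma-\Sigma_{L}](\sigma)$ with $\sigma=\ln(\rho/\rho_{L})$, followed by the same two differentiations for the monotonicity of $[\Sigma-\Sigma_L]$ and of its derivative. The approach and the supporting lemmas are the same; no substantive differences.
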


An interesting fact is that the change in $\Sigma$ becomes nearly
linear for strong shock waves.  We state this as a corollary.
\begin{corollary}
  Under the assumptions of Proposition
  \ref{EOS-Independent}, the change in $\Sigma$ becomes nearly
  linear for strong shocks.
\end{corollary}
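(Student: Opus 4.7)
The plan is to extract the large-$\sigma$ asymptotic behavior of the explicit formula \eqref{Sigma-Change} for $[\Sigma-\Sigma_L](\sigma)$, since ``strong shock'' is precisely the regime in which the density jump $\sigma = \ln(\rho/\rho_L)$ is large. All of the analytic content is already packaged in \eqref{Sigma-Change} and its derivative computed in the proof of Proposition \ref{EOS-Independent}; the task reduces to reading off the $\sigma\to\infty$ behavior.

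First I would work directly with the closed-form derivative
$$\frac{d[\Sigma-\Sigma_L]}{d\sigma} = \frac{(e^\sigma-1)^2(2-\gamma)(\gamma-1)}{2\bigl(1+e^\sigma(\gamma-1)\bigr)\bigl(e^\sigma+(\gamma-1)\bigr)}.$$
As $\sigma\to\infty$ the numerator grows like $(2-\gamma)(\gamma-1)\,e^{2\sigma}$ and the denominator like $2(\gamma-1)\,e^{2\sigma}$, so the derivative converges to the positive constant $(2-\gamma)/2$ at an exponential rate. This limit is strictly positive because $1<\gamma<2$, which is exactly the required conclusion at the level of slopes.

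Next I would integrate this asymptotic, or equivalently substitute the leading behaviors $\ln\bigl(1+(\gamma-1)e^{\sigma}\bigr) = \sigma + \ln(\gamma-1) + O(e^{-\sigma})$ and $\ln\bigl(1+(\gamma-1)e^{-\sigma}\bigr) = O(e^{-\sigma})$ into the expression \eqref{Sigma-Change}, to obtain the sharpened statement
$$[\Sigma-\Sigma_L](\sigma) = \frac{2-\gamma}{2}\,\sigma + \frac{\gamma}{2}\ln(\gamma-1) + O(e^{-\sigma}) \quad \text{as } \sigma\to\infty.$$
This is the quantitative version of ``nearly linear.''

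Finally, I would translate from the parameter $\sigma$ to the actual wave-strength coordinates $\Delta r$ (for a $1$-shock) and $\Delta s$ (for a $3$-shock), using the linear relation $\Delta r = \tfrac{2a}{a^2+1}\sigma$ recorded just above \eqref{Sigma-Change}. Asymptotic linearity in $\sigma$ transfers with the slope rescaled by $(a^2+1)/(2a)$, so the corollary holds equally well with shock strength measured by the change in Riemann invariants. There is no genuine obstacle here; the only bookkeeping is in the logarithm expansion, and everything else is transparent from the explicit closed form \eqref{Sigma-Change}.
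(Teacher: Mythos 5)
Your proposal is correct and takes essentially the same approach as the paper: the paper's proof consists precisely of the limit computation $\lim_{\sigma\to\infty} d[\Sigma-\Sigma_L]/d\sigma = (2-\gamma)/2$, which is the first step of your argument. Your additional details (the explicit asymptotic $[\Sigma-\Sigma_L](\sigma) = \tfrac{2-\gamma}{2}\sigma + \tfrac{\gamma}{2}\ln(\gamma-1) + O(e^{-\sigma})$ and the rescaling to $\Delta r$, $\Delta s$ coordinates) are harmless elaborations of the same idea.
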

\begin{proof}
This follows immediately when considering the following limit:
\begin{displaymath}
  \lim_{\sigma\rightarrow\infty}\frac{d[\Sigma-\Sigma_{L}]}{d\sigma}=\lim_{\sigma\rightarrow\infty}\frac{(e^{\sigma}-1)^{2}(2-\gamma)(\gamma-1)}{2(1+e^{\sigma}(\gamma-1))(e^{\sigma}+(\gamma-1))}=\frac{(2-\gamma)}{2}.
\end{displaymath}
\end{proof}

In the following sections we will show that both an ideal gas and
one dominated by radiation fall into the family of equations of
state given by \eqref{EOS-Family}.

    \subsection{Ideal Gas}
        \skipline An ideal gas satisfies the ideal gas law,
\begin{displaymath}
  \frac{p}{n}=RT,
\end{displaymath}
\cite{Cour-Friderichs}. Furthermore, if we assume that the internal
energy is proportional to the temperature,
\begin{displaymath}
  \epsilon=\frac{R}{\gamma-1}T,
\end{displaymath}
we can use the second law of thermodynamics to determine
$\epsilon(n,S)$. From
\begin{displaymath}
  \frac{\partial\epsilon}{\partial S}=T=\frac{\gamma-1}{R}\epsilon,
\end{displaymath}
we get the for some function $\varphi$,
\begin{displaymath}
    \epsilon(n,S)=e^{\varphi(n)}e^{\frac{\gamma-1}{R}S}.
\end{displaymath}
To find $\varphi(n)$ we again use the second law of thermodynamics
to get the relation
\begin{displaymath}
  \frac{\partial\epsilon}{\partial n}=\frac{\gamma-1}{n}\epsilon,
\end{displaymath}
which reduces to
\begin{displaymath}
  \frac{d\varphi}{dn}=\frac{\gamma-1}{n}.
\end{displaymath}
Solving for $\varphi$, we get
\begin{displaymath}
  \varphi(n)=(\gamma-1)\ln{(n)}=\ln{(n^{\gamma-1})}.
\end{displaymath}
Therefore,
\begin{displaymath}
  \epsilon(n,S)=e^{\frac{\gamma-1}{R}S}n^{\gamma-1},
\end{displaymath}
for some $1<\gamma<2$. Thus, in the case of a polytropic gas, we
have an equation of state of the form
$\epsilon(n,S)=A(S)n^{\gamma-1}$, where
\begin{displaymath}
  A(S)=e^{\frac{\gamma-1}{R}S},
\end{displaymath}
satisfying \eqref{A1}, \eqref{A2} and \eqref{A3}. We also see that
$\Sigma$ is proportional to $S$:
\begin{displaymath}
  \Sigma(S)=\ln(e^{\frac{\gamma-1}{R}S})=\frac{\gamma-1}{R}S.
\end{displaymath}
Notice that in the case of an ultra-relativistic polytropic gas, it
would have been sufficient to consider $(r,s,S)-$space since the
shock curves would still be translationally invariant.

    \subsection{Radiation Dominated Gas}
        \skipline A gas in local thermodynamical equilibrium with radiation
when only the internal energy and pressure are dominated by
radiation is characterized by
    \begin{equation}\label{Internal-Pressure-Radiation-4-3}
      \epsilon=\frac{a_{R}T^{4}}{n}\phantom{33} \textrm{and}
      \phantom{44} p=\frac{1}{3}a_{R}T^{4},
    \end{equation}
where $a_{R}=7.56\times 10^{-15}$ is the Stefan-Boltzmann constant,
\cite{Anile-RelFluids}. We can generalize the equation of state
\eqref{Internal-Pressure-Radiation-4-3} to the continuum of
equations of state,
    \begin{equation}\label{Internal-Pressure-Radiation-Gamma}
      \epsilon=\frac{a_{R}T^{\frac{\gamma}{\gamma-1}}}{n}\phantom{33} \textrm{and}
      \phantom{44} p=(\gamma-1)a_{R}T^{\frac{\gamma}{\gamma-1}},
    \end{equation}
for $1<\gamma<2$.  Notice \eqref{Internal-Pressure-Radiation-Gamma}
reduces to \eqref{Internal-Pressure-Radiation-4-3} when $\gamma$ is
chosen to be $4/3$.

In order to find the entropy profile associated with this equation
of state we again use the second law of thermodynamics. From
$d\epsilon/dn=p/n^{2}$ we find
\begin{displaymath}
  \frac{\left(\frac{\gamma}{\gamma-1}\right)a_{R}nT^{\left(\frac{1}{\gamma-1}\right)}\frac{dT}{dn}-a_{R}T^{\left(\frac{\gamma}{\gamma-1}\right)}}{n^{2}}=\frac{p}{n^{2}},
\end{displaymath}
which in light of \eqref{Internal-Pressure-Radiation-Gamma} and
after some algebra, reduces to
\begin{equation}\label{Radiation-dt-dn}
  \frac{dT}{dn}=(\gamma-1)\frac{T}{n}.
\end{equation}
Similarly, from $d\epsilon/dS=T$, we find
\begin{displaymath}
  \frac{\left(\frac{\gamma}{\gamma-1}\right)a_{R}nT^{\left(\frac{1}{\gamma-1}\right)}\frac{dT}{dS}}{n^{2}}=T,
\end{displaymath}
which can be simplified and integrated to find that for some
function $f(n)$,
\begin{equation}\label{Radiation-T-S-fn}
  \gamma a_{R}T^{\left(\frac{1}{\gamma-1}\right)}=nS+f(n).
\end{equation}
Differentiating \eqref{Radiation-T-S-fn} with respect to $n$ and
using \eqref{Radiation-dt-dn}, we find the following relation on
$f$,
\begin{displaymath}
  nf'(n)=f(n).
\end{displaymath}
Thus, for some constant $c$, $f(n)=cn$, and we can incorporate $c$
into the entropy level $S$ giving,
\begin{displaymath}
  \gamma a_{R}T^{\left(\frac{1}{\gamma-1}\right)}=nS,
\end{displaymath}
or equivalently,
\begin{displaymath}
  S=\frac{\gamma a_{R}T^{\left(\frac{1}{\gamma-1}\right)}}{n}.
\end{displaymath}
Therefore, in the ultra-relativistic regime or for a massless gas,
\begin{displaymath}
  \rho=a_{R}\left(\frac{S}{\gamma a_{R}}\right)^{\gamma}n^{\gamma}
\end{displaymath}
and
\begin{equation}\label{Radiation-Internal}
  \epsilon(n,S)=a_{R}\left(\frac{S}{\gamma a_{R}}\right)^{\gamma}n^{\gamma-1}.
\end{equation}
The equation of state for thermal radiation
\eqref{Radiation-Internal} is of the form \eqref{EOS-Family} with
\begin{displaymath}
  A(S)=a_{R}\left(\frac{S}{\gamma a_{R}}\right)^{\gamma}.
\end{displaymath}

Unlike the case for a polytropic gas, where it would have been
sufficient to consider just the change in $S$, the change in entropy
across a shock wave is no longer a function of $\ln(\rho/\rho_{L})$
alone; it is also dependent on the starting entropy level.  More
specifically, in this case we have the ratio of entropy values being
independent of base state, rather than the difference, leading to
\begin{displaymath}
  S-S_{L}=S_{L}\left(\frac{S}{S_{L}}-1\right)=S_{L}\left(\left[\frac{S}{S_{L}}\right](\sigma)-1\right).
\end{displaymath}
Choosing the new coordinate $\Sigma=\ln(A(S))$ is necessary to keep
the change independent of base point.  In the case of a radiation
dominated gas,
\begin{displaymath}
  \Sigma=\gamma\ln\left(\frac{S}{\gamma
  a_{R}^{\frac{\gamma-1}{\gamma}}}\right).
\end{displaymath}

\bigskip

\section{The Riemann Problem}

    Riemann problems are used as the building blocks of finite volume
method solution schemes for systems of conservation laws. The
Riemann problem is a particular class of Cauchy problems with
initial data of the form,
\begin{displaymath}
  U_{0}(x)=\left\{\begin{array}{ll}
    U_{L}\phantom{44444} & x<0, \\
    U_{R} & x>0. \nonumber\\
  \end{array}\right.
\end{displaymath}
We will show that for any to initial states in the region $\rho>0$,
$-1<v<1$ and $S>0$, there exists a solution of the Riemann problem
for the system \eqref{System-Conservation} with equation of state
\eqref{EOS-Family}.

\begin{theorem}
Consider left and right states $U_{L}=(\rho_{L},v_{L},S_{L})$ and
$U_{R}=(\rho_{R},v_{R},S_{R})$, such that $\rho_{L},\rho_{R}>0$,
$-1<v_{L},v_{R}<1$, and $S_{L},S_{R}>0$.  With the equation of state
\eqref{EOS-Family} satisfying $1<\gamma<2$, \eqref{A1}, \eqref{A2}
and \eqref{A3}, there exists a weak solution to the Riemann problem
$<U_{L},U_{R}>$ for system \eqref{System-Conservation} in the
ultra-relativistic limit. This solution is unique in the class of
solutions with constant states separated by centered rarefaction,
shock and contact waves.
\end{theorem}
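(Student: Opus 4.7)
The plan is to exploit the fact that in the ultra-relativistic limit the pressure reduces to $p=(\gamma-1)\rho$, a function of $\rho$ alone, so the equations governing $(\rho,v)$ decouple from the entropy equation. I first solve the projected Riemann problem in the plane of Riemann invariants $(r,s)$ to obtain the pressure and velocity of the intermediate states, then insert a single contact discontinuity carrying the entropy jump $S_L\to S_R$, and finally recover the particle number $n$ on either side by inverting $\rho = A(S)n^{\gamma}$, which comes from $\rho=n\epsilon$ together with $\epsilon(n,S)=A(S)n^{\gamma-1}$ and which is one-to-one by Proposition \ref{Change-Of-Variables}.

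For the projected problem, I build the forward $1$-wave curve $\mathcal{T}_1(U_L)=\mathcal{R}_1^+(U_L)\cup\mathcal{S}_1^-(U_L)$ and the backward $3$-wave curve $\mathcal{T}_3^{-1}(U_R)$, consisting of all states that connect to $U_R$ on the right by an admissible $3$-wave. By Lemma \ref{Lemma-Smoller-Temple}, $\mathcal{T}_1(U_L)$ is an $L$-shape in the $(r,s)$-plane with corner at $(r_L,s_L)$: a horizontal ray $\{s=s_L,\,r\ge r_L\}$ (the $1$-rarefaction, along which $\rho$ decreases) and a monotone shock branch $\{s=\phi_L(r):r\le r_L\}$ with $0\le\phi_L'(r)\le K'<1$. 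By the mirror symmetry across the line $r=s$ in part (2) of that lemma, $\mathcal{T}_3^{-1}(U_R)$ is similarly an $L$-shape with corner at $(r_R,s_R)$, consisting of the vertical ray $\{r=r_R,\,s\ge s_R\}$ and a shock branch with $0\le dr/ds\le K'<1$. Since one non-degenerate shock branch has slope strictly less than $1$ (as a graph of $s$ over $r$) and the other has slope strictly greater than $1$, a short case analysis on the relative position of $(r_L,s_L)$ and $(r_R,s_R)$ shows that the two curves cross transversally at a unique point $(r^*,s^*)$.

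Having obtained the projected intermediate state $(\rho^*,v^*)$ from $(r^*,s^*)$, I assemble the full three-wave solution: a $1$-wave connects $U_L$ to $U_M^L=(\rho^*,v^*,S_L)$, a contact discontinuity connects $U_M^L$ to $U_M^R=(\rho^*,v^*,S_R)$, and a $3$-wave connects $U_M^R$ to $U_R$. The jump conditions \eqref{Jump-Conditions} across the contact are satisfied with $m=0$ since $p^*$ and $v^*$ agree on both sides while $n$ is determined separately on each side, and Proposition \ref{Change-Of-Variables} guarantees $n$ is uniquely fixed by $(\rho^*,S_L)$ and $(\rho^*,S_R)$ respectively. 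Uniqueness within the stated class then follows from uniqueness of $(r^*,s^*)$ together with the forced entropy jump. The main obstacle, and the reason the ultra-relativistic reduction is essential, is establishing the \emph{global} intersection of the wave curves for arbitrarily separated data; this uses the uniform slope bound strictly less than $1$ in part (3) of Lemma \ref{Lemma-Smoller-Temple}, without which the two shock branches could be asymptotically parallel and fail to meet for large data.
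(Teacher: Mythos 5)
Your treatment of the projected $(r,s)$ Riemann problem is in the right spirit — the paper likewise reduces to a $2\times 2$ problem in the plane of Riemann invariants and uses the slope bound from Lemma \ref{Lemma-Smoller-Temple}(3) to secure a global intersection of wave curves (though the paper organizes this as a four-region partition of the plane based at $\overline{U}_L$ rather than a forward/backward wave-curve intersection; both are legitimate).

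There is, however, a genuine error in how you assemble the full three-dimensional solution. You place the contact discontinuity between $U_M^L=(\rho^*,v^*,S_L)$ and $U_M^R=(\rho^*,v^*,S_R)$, which implicitly asserts that entropy is constant across the outer $1$- and $3$-waves. That is true only when those waves are rarefactions: $S$ is a Riemann invariant for the genuinely nonlinear fields, so it is carried unchanged along the integral curves. Across a \emph{shock}, however, the specific entropy jumps — this is the physical content of the Lax entropy condition and is encoded quantitatively in the first Rankine-Hugoniot relation \eqref{Jump-Conditions}, whose consequence is the Taub adiabat \eqref{Taub-pa2rho} and the formula \eqref{Sigma-Change}. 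Given $\rho_L$, $S_L$ (hence $n_L$) and the shock strength determined by $\rho^*$, the value of $n$ — and therefore $S$ — on the downstream side of a $1$-shock is fixed by \eqref{Taub-pa2rho}; it is not a free parameter, and it is not $S_L$. If you nevertheless set it equal to $S_L$, the resulting piecewise-constant function fails the weak form of the particle-number conservation law across that shock, so it is not a weak solution of the full $3\times 3$ system. The correct construction, as in the paper, is: compute $(\rho^*,v^*)$ from the projected problem, then compute the entropy $S_M$ on the inner side of the $1$-wave as $S_L$ if the $1$-wave is a rarefaction, or as determined by \eqref{Taub-pa2rho} from $(\rho_L,S_L)\to\rho^*$ if it is a shock, and likewise compute $S_M'$ inward from $S_R$ across the $3$-wave; the contact then joins $(\rho^*,v^*,S_M)$ to $(\rho^*,v^*,S_M')$, and it is across this single contact that $S$ (and $n$) may jump freely while $p$ and $v$ remain continuous. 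Your appeal to Proposition \ref{Change-Of-Variables} to recover $n$ from $(\rho,S)$ is fine, but it must be applied with $S_M$ and $S_M'$, not $S_L$ and $S_R$.
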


\begin{proof}
For any entropy level, the projection of the shock-rarefaction
curves onto the $rs-$plane is translationally invariant by Lemma
\ref{Lemma-Smoller-Temple}. We will show first that for any two
states, $\overline{U}_{L}=(\rho_{L},v_{L})$ and
$\overline{U}_{R}=(\rho_{R},v_{R})$ in the $rs-$plane, there exists
a intermediate state $\overline{U}_{M}=(\rho_{M},v_{M})$ such that
$\overline{U}_{M}$ is on the shock-rarefaction curve based at
$\overline{U}_{L}$ and $\overline{U}_{R}$ is on the
shock-rarefaction curve based at $\overline{U}_{M}$.  For
convenience, let $\mathcal{T}_{i}(\overline{U})$ denote the
projection of the $i^{th}-$shock-rarefaction curve based at
$\overline{U}$ at any value of $S$ onto the $rs-$plane.  Given a
state $\overline{U}_{L}$, partition the $rs-$plane into four
regions: $I$, consisting of all states above
$\mathcal{T}_{1}(\overline{U}_{L})$ and to the right of
$\mathcal{T}_{3}(\overline{U}_{L})$; $II$, states above
$\mathcal{T}_{1}(\overline{U}_{L})$ and to the left of
$\mathcal{T}_{3}(\overline{U}_{L})$; $III$, states below
$\mathcal{T}_{1}(\overline{U}_{L})$ and above
$\mathcal{T}_{3}(\overline{U}_{L})$; and $IV$, states below
$\mathcal{T}_{1}(\overline{U}_{L})$ and to the left of
$\mathcal{T}_{3}(\overline{U}_{L})$. See Figure
\ref{Riemann-Problem-Sections}.

\begin{figure}
\begin{center}
  \includegraphics[height=185pt]{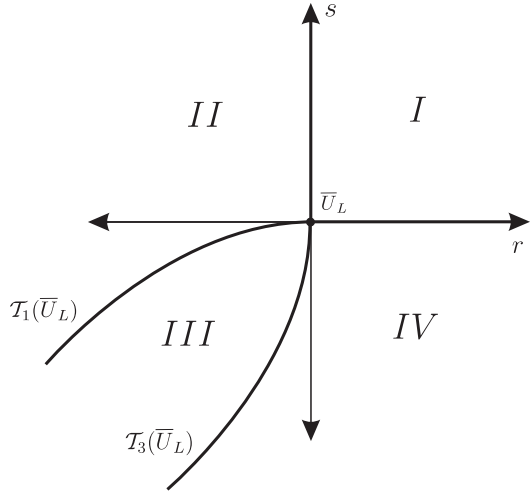}
  \caption{A partition of the $rs-$plane into four sections: $I$, $II$, $III$ and $IV$.}\label{Riemann-Problem-Sections}
\end{center}
\end{figure}

Consider $\mathcal{T}_{1}(\overline{U}_{L})$. For each
$\overline{U}_{M}\in\mathcal{T}_{1}(\overline{U}_{L})$ the 3-wave
rarefaction curves based at $\overline{U}_{M}$ extend vertically
upwards, parallel to the $s-$axis. Therefore, for all states
$\overline{U}_{R}$ in region $I$ or $II$, there is a unique state
$\overline{U}_{M}\in\mathcal{T}_{1}(\overline{U}_{L})$ that connects
$\overline{U}_{L}$ to $\overline{U}_{R}$ by a $1-$shock or a
$1-$rarefaction wave followed by a $3-$rarefaction wave.

We now turn our attention to the portion below
$\mathcal{T}_{1}(\overline{U}_{L})$ in the $rs-$plane.  For region
$IV$, we notice for any state
$\overline{U}_{1}\in\mathcal{R}^{+}_{1}(\overline{U}_{L})$ the shock
curve $\mathcal{S}^{-}_{3}(\overline{U}_{1})$ is a horizontal
translation of $\mathcal{S}^{-}_{3}(\overline{U}_{L})$. Thus, all
the $3-$shock curves extending from
$\mathcal{R}^{+}_{1}(\overline{U}_{L})$ cover region $IV$. For
region $III$ it is clear that the $3-$shock curves extending
downward from $\mathcal{S}^{-}_{1}(\overline{U}_{L})$ must cover all
states in the region. But, we must show that if we take two states
$\overline{U}_{1}$ and $\overline{U}_{2}$ on the shock curve of
$\overline{U}_{L}$ they will never intersect. Suppose two shock
curves intersect at a third state $\overline{U}_{3}$.  See Figure
\ref{Shock-Intesection}. We know by Lemma \ref{Lemma-Smoller-Temple}
that

\begin{figure}
\begin{center}
  \includegraphics[height=190pt]{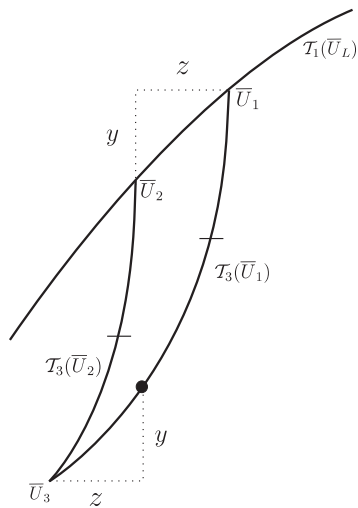}
  \caption{Possible Intersection of two $3-$Shock Curves.}\label{Shock-Intesection}
\end{center}
\end{figure}

\begin{displaymath}
  \frac{z}{y}\leq\frac{\sqrt{2K}-1}{-\sqrt{2K}-1}<1.
\end{displaymath}
However, if $\overline{U}_{1}$ and $\overline{U}_{2}$ are on the
same shock curve,
\begin{displaymath}
  \frac{y}{z}\leq\frac{\sqrt{2K}-1}{-\sqrt{2K}-1}<1.
\end{displaymath}
It must be that the curves never intersect.  Thus, we can solve the
Riemann problem $<\overline{U}_{L},\overline{U}_{2}>$ in the
$rs-$plane.

Now, we use this result to find a solution to the Riemann problem
with $U_{L}=(\rho_{L},v_{L},S_{L})$ and
$U_{R}=(\rho_{R},v_{R},S_{R})$. By the previous argument, find a
middle state $(\rho_{M},v_{M})$ that solves the Riemann problem,
$<(\rho_{L},v_{L}),(\rho_{L},v_{L})>$, in the $rs$-plane.  We only
need to find the two values of $S$ on either side of the contact
discontinuity. This can be accomplished by determining the change in
entropy, across the $1$ and $3-$waves then adapting these changes to
the left and right values of S. For example, the left middle state
$U_{M}$ would have entropy value $S_{L}$ if we had a $1-$rarefaction
wave, and would have entropy value $S_{M}$, where $S_{M}-S_{L}$
equals the corresponding increase in $S$ across the shock wave. We
can find the change in entropy by looking at the equation
$\rho/n^{\gamma}=A(S)$ and solving for $S$.  This is possible since
$A$ is strictly monotone increasing away from zero. Similar methods
determine the value of $S_{M}'$ and the value of entropy in the
right middle state. Since the entropy values of the middle states
satisfy, $S_{L}\leq S_{M}$ and $S_{R}\leq S_{M}'$, we have
$S_{M},S_{M}'>0$. The position of the entropy jump is determined by
the particle path emanating from the initial discontinuity with
speed $v_{M}$.

This construction determines the two unique states
$U_{M}=(\rho_{M},v_{M},S_{M})$ and $U_{M}'=(\rho_{M},v_{M},S_{M}')$
that solves the Riemann problem in the region $\rho>0$, $-1<v<1$ and
$S>0$. Figure \ref{Figure-Riemann-Problem}.
\end{proof}

\begin{figure}
\begin{center}
  \includegraphics[width=6in]{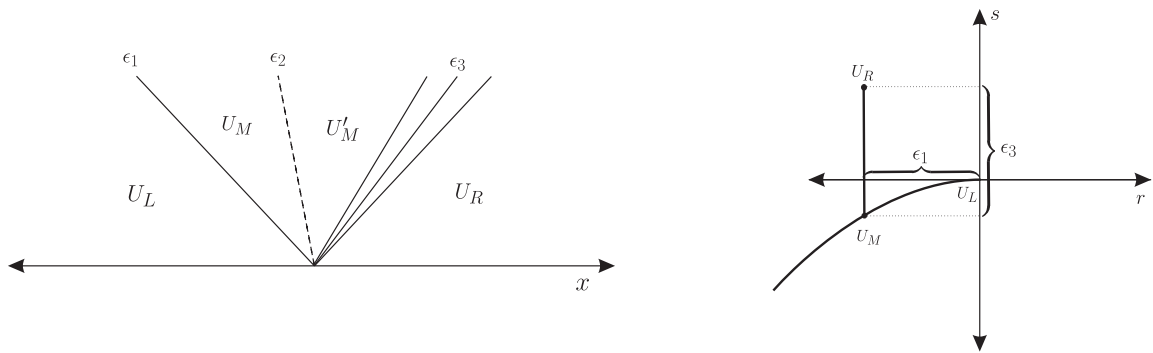}
  \caption{Solution to the Riemann Problem
$<U_{L},U_{R}>$. The states $U_{M}$ and $U_{M}'$ differ only in
$S$.\label{Figure-Riemann-Problem}}
\end{center}
\end{figure}


We parameterize the $1-($resp. $3)$shock/rarefaction curve by the
change in $r($resp. $s)$ and define the strength of a shock or
rarefaction wave as the difference in the values of either $r$ for a
$1-$shock-rarefaction wave, or $s$ for a $2-$shock-rarefaction wave.
We choose the orientation on our parametrization so that we have a
positive parameter along the rarefaction curve and negative
parameter along the shock curve. Therefore, the solution of the
Riemann problem can be given as a sequence of three coordinates,
$(\epsilon_{1},\epsilon_{2},\epsilon_{3})$ where, $\epsilon_{1}$
denotes the change in the Riemann invariant $r$ from $U_{L}$ to
$U_{M}$, $\epsilon_{2}$ the change in $S$ from $U_{M}$ to $U_{M}'$
and $\epsilon_{3}$ the change in the Riemann invariant $s$ from
$U_{M}'$ to $U_{R}$. In summary, for $i=1,3$ we have a shock wave of
strength $\epsilon_{i}$ when $\epsilon_{i}<0$ and a rarefaction wave
of strength $\epsilon_{i}$ when $\epsilon_{i}>0$.

We adopt the following notation:
\begin{eqnarray}
  \alpha & \phantom{444444} & \textrm{Strength of } 1-\textrm{Shock Wave}\nonumber\\
  \beta  & \phantom{444444} & \textrm{Strength of } 3-\textrm{Shock Wave}\nonumber\\
  \mu    & \phantom{444444} & \textrm{Strength of } 1-\textrm{Rarefaction Wave}\nonumber\\
  \eta    & \phantom{444444} & \textrm{Strength of } 3-\textrm{Rarefaction Wave}\nonumber\\
  \delta & \phantom{444444} & \textrm{Strength of Entropy}\phantom{3}\Sigma\textrm{-Wave}\nonumber
\end{eqnarray}
If $(\epsilon_{1},\epsilon_{2},\epsilon_{3})$ is the solution to the
Riemann problem with states $U_{L}$, $U_{R}$, we would have:
\begin{displaymath}
  \alpha=\left\{\begin{array}{ll}
    -\epsilon_{1}\phantom{44444} & \epsilon_{1}\leq 0 \\
    0 & \textrm{Otherwise} \nonumber\\
  \end{array}\right.\nonumber
  \phantom{44444}
  \beta=\left\{\begin{array}{ll}
    -\epsilon_{3}\phantom{44444} & \epsilon_{3}\leq 0 \\
    0 & \textrm{Otherwise} \nonumber\\
  \end{array}\right.\nonumber
\end{displaymath}
\begin{displaymath}
  \mu=\left\{\begin{array}{ll}
    \epsilon_{1}\phantom{44444} & \epsilon_{1}\geq 0 \\
    0 & \textrm{Otherwise} \nonumber\\
  \end{array}\right.\nonumber
  \phantom{44444}
  \eta=\left\{\begin{array}{ll}
    \epsilon_{3}\phantom{44444} & \epsilon_{3}\geq 0 \\
    0 & \textrm{Otherwise} \nonumber\\
  \end{array}\right.\nonumber
\end{displaymath}

\bigskip

We define $\delta=\Sigma_{R}-\Sigma_{L}$ where $\Sigma=\ln(A(S))$.
The value of $S$ may be recovered by recalling this definition and
since $\Sigma$ is a strictly increasing function of $S$ by
\eqref{A3}. Also, we will denote $\delta_{\omega}$ as the absolute
change of $\Sigma$ across a shock wave of strength $\omega$.  More
specifically, if two states were separated by a shock of strength
$\omega$ the absolute change in $\Sigma$ across the shock would be
$\delta_{\omega}$ for either a $1$ or $3-$shock. Since we have shown
that the change in $\Sigma$ is independent on the base state and
dependent only on the strength of the wave, $\delta_{\omega}$ is
well defined.

\bigskip

\section{Interaction Estimates}\label{Section-InteractionEstimates}

    In this section we prove estimates for elementary wave interactions
with a method that follows the work by Nishida and Smoller, and
Temple in \cite{Nishida-Smoller} and \cite{Temple-Large}.  This
method is employed in order to simplify the estimates on the
variation in the entropy. The alternative approach, useing the wave
interaction potential $Var\left\{ln(\rho)\right\}$ introduced by Liu
and used in \cite{Smoller-Temple-Global-Solutions-Rel-Euler},
simplifies the estimates dealing with the first and third,
nonlinear, characteristic classes, but complicates the estimates
dealing with the entropy.

Consider the following three states,
$U_{L}=(\rho_{L},v_{L},\Sigma_{L})$,
$U_{M}=(\rho_{M},v_{M},\Sigma_{M})$, and
$U_{R}=(\rho_{R},v_{R},\Sigma_{R})$. We wish to estimate the
difference in the solutions of the three Riemann problems
$<U_{L},U_{M}>$, $<U_{M},U_{R}>$, and $<U_{L},U_{R}>$ with solutions
denoted by a $1$ subscript, $2$ subscript and $'$ respectively.
\begin{proposition}\label{Interaction-Estimates}
Let $\Omega$ be a simply connected compact set in $rs-$space.  Then
there exists a constant $C_{0}$, $1/2<C_{0}<1,$ such that for any
interaction $<U_{L},U_{M}>+<U_{M},U_{R}>\rightarrow<U_{L},U_{R}>$ in
$\Omega$ at any value of $\Sigma$, one of the following holds:
  \begin{eqnarray}
    i.)\phantom{4} & A=-\xi\leq 0, \phantom{333} 0\leq B \leq C_{0}\xi, \phantom{33} \nonumber\\
                     & \textrm{or}\nonumber\\
                     & B=-\xi\leq 0, \phantom{333} 0\leq A \leq C_{0}\xi, \phantom{33} \nonumber\\
    ii.) \phantom{4}            & A\leq 0, \textrm{ and } B\leq 0.\nonumber \phantom{4444444444444}
  \end{eqnarray}
  Where $A=\alpha'-\alpha_{1}-\alpha_{2}$ and
  $B=\beta'-\beta_{1}-\beta_{2}$ are change in the strengths of the $1$
  and $3$ shock waves in the solutions.
\end{proposition}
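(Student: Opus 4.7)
The plan is to carry out a geometric case analysis entirely in the projected $(r,s)$-plane, exploiting the translation invariance and bounded slope of the shock curves from Lemma \ref{Lemma-Smoller-Temple}. First note that $A$ and $B$ involve only the $1$- and $3$-shock strengths, and that by Lemma \ref{Lemma-Smoller-Temple} together with Proposition \ref{EOS-Independent} these strengths depend only on the $(r,s)$-projections of $U_L,U_M,U_R$: the contact discontinuity contributes only to $\delta$, and the projected shock-rarefaction curves are independent of the $\Sigma$ level. Thus, writing $(\epsilon_1,\epsilon_3)$, $(\epsilon_1',\epsilon_3')$, $(\epsilon_1'',\epsilon_3'')$ for the signed $1$- and $3$-wave strengths of the three Riemann problems, everything reduces to comparing two piecewise paths from $\pi U_L$ to $\pi U_R$ in the $(r,s)$-plane.

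Next I would enumerate cases by the signs of the four input strengths $\epsilon_1,\epsilon_3,\epsilon_1',\epsilon_3'$. When both $1$-waves are rarefactions, the $1$-shock contribution is absent, so $\alpha_1=\alpha_2=\alpha'=0$, and similarly in the other family; rarefaction curves being coordinate-parallel straight lines in $(r,s)$ then makes case (ii) automatic. The same-family shock-rarefaction sub-cases (opposite signs within one family) produce partial cancellation, and I would check via the convexity clause of Lemma \ref{Lemma-Smoller-Temple} that they fit case (ii) or the single-tradeoff form of case (i).

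The central case is the combination of two shocks in the same family, say a $1$-shock in each Riemann problem. Here I would use translation invariance to slide the second $1$-shock curve, based at an intermediate point, back to an equivalent curve based at $\pi U_L$, so that the direct $\pi U_L \to \pi U_R$ Riemann path differs from the composite path only by a rearrangement that commutes a $3$-segment past a $1$-segment. The geometric defect measures the loss $\xi = -A > 0$ in the $1$-family and the simultaneous gain in the $3$-family; the slope bound $0 \le ds/dr \le \kappa < 1$ from Lemma \ref{Lemma-Smoller-Temple}, with $\kappa = (\sqrt{2K}-1)/(-\sqrt{2K}-1)$, then yields $0 \le B \le \kappa\xi$, which is exactly case (i). Taking $C_0 = \max(\kappa,\,\tfrac{1}{2}+\varepsilon)$ for any small $\varepsilon>0$ gives a uniform constant with $\tfrac{1}{2}<C_0<1$ on $\Omega$.

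The main obstacle is the thorough enumeration of sub-cases and the bookkeeping needed to reduce each composite path to the canonical $1$-wave followed by $3$-wave ordering of the direct Riemann solution; in the mixed shock-rarefaction configurations one has to confirm the sign of the excess in the passive family, using convexity (part 3 of Lemma \ref{Lemma-Smoller-Temple}) to rule out a wrong-signed gain. A secondary point is that $\kappa$ depends only on $a$ and not on base point, so uniformity over the compact set $\Omega$ is automatic from the translation invariance in Lemma \ref{Lemma-Smoller-Temple}.
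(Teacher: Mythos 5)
Your proposal matches the paper's approach: a geometric case analysis in the projected $(r,s)$-plane using translation invariance, bounded slopes, and convexity of the shock curves from Lemma~\ref{Lemma-Smoller-Temple}, which is precisely what the paper carries out case by case in Chapter~\ref{Interaction-Estimates-Chapter}. The only cosmetic difference is in the choice of $C_0$: the paper takes the slope at the largest shock realized in $\Omega$ (see~\eqref{Definition-C_0}), while you use the global slope bound $\kappa$ from Lemma~\ref{Lemma-Smoller-Temple}, both of which give a uniform constant strictly below~$1$.
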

Here we note that after an interaction, the shock wave strength in
one family may increase, but this increase is uniformly bounded by a
corresponding decrease in shock strength for the opposite family.
\begin{proof}
These estimates are proven in Chapter
\ref{Interaction-Estimates-Chapter} by a systematic look at all
possible wave interactions.  Because the interactions are
independent of entropy level, we only consider interactions within
the first and third characteristic classes.  There are sixteen
unique incoming wave configurations and between one and four
possible outgoing wave configurations.  The main idea is that
\textit{after an interaction, there cannot be an overall increase in
the strengths of the shock waves}. This fact follows since as the
solution progresses forward in time, cancelations and merging of
shock and rarefaction waves of the same class lead to a decrease in
shock strength. For example, when a shock wave is weakened by a
rarefaction wave, a reflected shock wave is created in the opposite
family. This interaction may increase the total strength of the
shock waves in the opposite family, but the total gain in shock
strength is uniformly bounded by the loss in the weakened or
annihilated shock.

We choose the constant $C_{0}$ to be the maximum slope of the
largest shock wave curve that lies within the compact set $\Omega$
or $1/2$ in order to bound the constant below.  More specifically,
let $\overline{\omega}$ be the strongest largest shock wave possible
in $\Omega$.  Then we take $C_{0}$ to be
\begin{equation}\label{Definition-C_0}
  C_{0}=\max\left\{\frac{1}{2},\left.\frac{dr}{ds}\right|_{\overline{\omega}},\left.\frac{ds}{dr}\right|_{\overline{\omega}}\right\}.
\end{equation}
Finally, by Lemma \ref{Lemma-Smoller-Temple}, the slopes of the
shock wave curves in a compact set in the $rs-$plane are strictly
bounded away by $1$. Therefore, we conclude $C_{0}<1$.
\end{proof}

\begin{figure}
\begin{center}
  \includegraphics[width=5.5in]{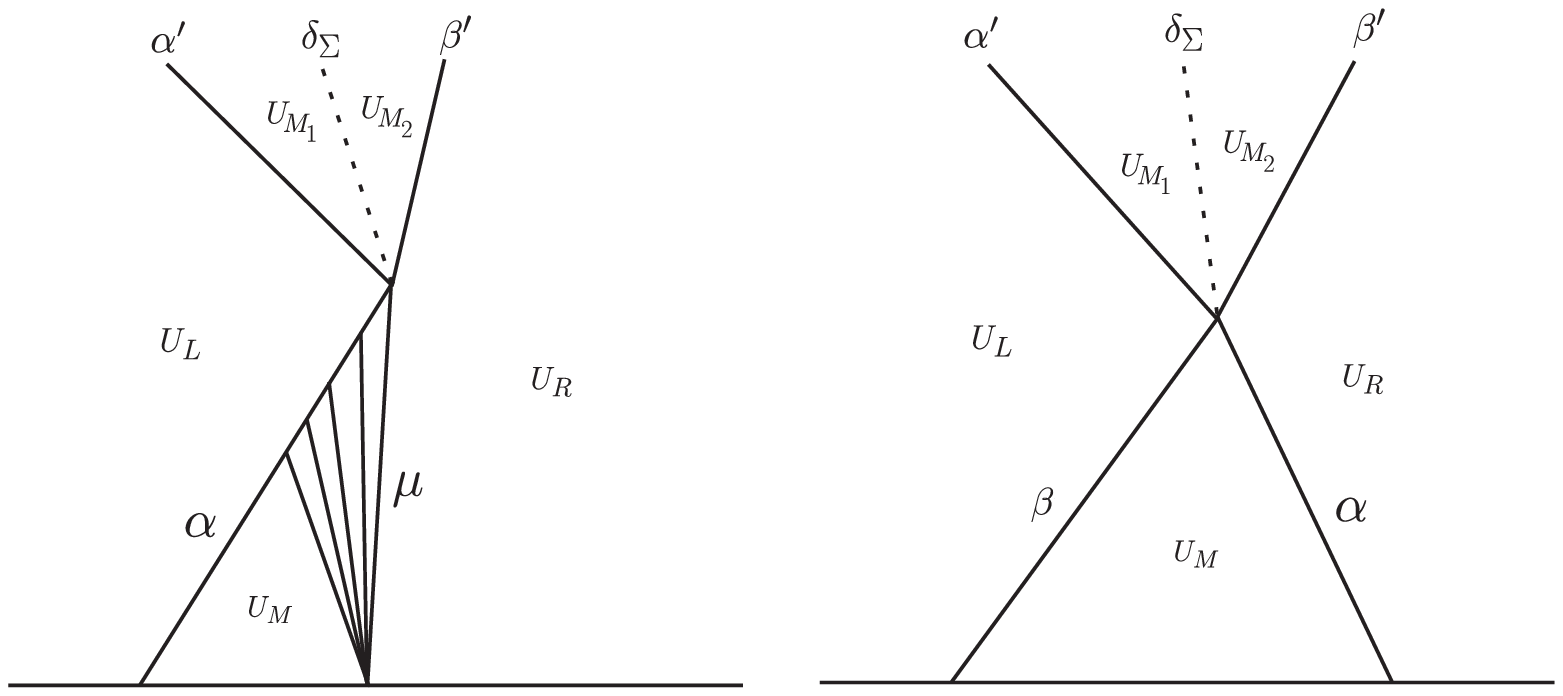}
  \caption{The creation of an entropy wave after elementary waves interact.}\label{Shock-Rarefaction-Interaction}
\end{center}
\end{figure}

For interactions in a compact set, the variation in $\Sigma$ across
a shock wave is uniformly bounded by a constant times the strength
of the shock. But, the variation in $\Sigma$ may increase after an
interaction because of the likely creation of an entropy wave.
Typically, across these waves the pressure is invariant and there is
a jump in density; however, under the assumption \eqref{p-a2rho},
there must be no jump in energy density. Thus, we cannot use
$\ln\left(\rho/\rho_{L}\right)$ or the change in the Riemann
invariants $r$ or $s$ as a measure of wave strength. It should be
noted that under certain interactions, such as an $i-$shock being
weakened by an incoming $i-$rarefaction wave, an entropy wave is
created with strength such that $S_{M_{2}}-S_{M_{1}}$ is equal to
the loss in entropy change across the shock, plus the change in the
entropy across the new shock wave in the opposite family.  We need a
way to bound the variation in the entropy waves, and it turns out
that this increase is bounded by a corresponding decrease in the
shock strengths.
\begin{proposition}\label{Entropy-Interaction}
  For every simply connected compact set $\Omega$ in $rs-$space, there exists a
  constant $M>0$ such that after every interaction in $\Omega$, at
  any value $\Sigma$ for the system \eqref{System-Conservation} with \eqref{EOS-Family} in the ultra-relativistic limit, the following holds:
  \begin{displaymath}
    |\delta'|-|\delta_{1}|-|\delta_{2}|+(\delta_{\alpha_{1}}+\delta_{\alpha_{2}}-\delta_{\alpha'})+(\delta_{\beta_{1}}+\delta_{\beta_{2}}-\delta_{\beta'})\leq
    -M(A+B).
  \end{displaymath}
\end{proposition}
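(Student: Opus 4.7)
The plan is to reduce the inequality to an estimate on the ``excess'' shock contributions $Y := \delta_{\alpha_1}+\delta_{\alpha_2}-\delta_{\alpha'}$ and $Z := \delta_{\beta_1}+\delta_{\beta_2}-\delta_{\beta'}$, then to bound their positive parts by $|A|$ and $|B|$ using the convexity and monotonicity of the jump function $\omega\mapsto\delta_\omega$, finally combining with the dichotomy of Proposition \ref{Interaction-Estimates}.

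First I would derive a conservation identity for $\Sigma$. Since $\Sigma=\ln A(S)$ is a Riemann invariant for both nonlinear fields, it is constant across rarefactions, jumps by $\delta_\omega\geq 0$ across a shock of strength $\omega$, and jumps by the signed quantity $\delta$ across the contact. Tracing $\Sigma$ through each of the three Riemann solutions yields
\begin{align*}
\Sigma_M-\Sigma_L &= \delta_{\alpha_1}+\delta_1+\delta_{\beta_1},\\
\Sigma_R-\Sigma_M &= \delta_{\alpha_2}+\delta_2+\delta_{\beta_2},\\
\Sigma_R-\Sigma_L &= \delta_{\alpha'}+\delta'+\delta_{\beta'}.
\end{align*}
Adding the first two and subtracting the third gives the fundamental identity $\delta'=\delta_1+\delta_2+Y+Z$. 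Applying the triangle inequality to $|\delta'|$ together with the elementary bound $|Y+Z|+Y+Z\leq 2Y^{+}+2Z^{+}$ (where $x^{+}:=\max(x,0)$) yields
\[
|\delta'|-|\delta_1|-|\delta_2|+Y+Z \;\leq\; 2Y^{+}+2Z^{+},
\]
so it suffices to prove $2Y^{+}+2Z^{+}\leq -M(A+B)$.

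The key step is to show $Y^{+}\leq K|A|$ and $Z^{+}\leq K|B|$ on $\Omega$, with $K$ depending only on $\Omega$. The explicit calculation of Section \ref{Equations-of-State-Section} makes $\omega\mapsto\delta_\omega$ smooth, monotone increasing, convex, and vanishing at $\omega=0$, hence superadditive on $[0,\infty)$: $\delta_{\omega_1}+\delta_{\omega_2}\leq\delta_{\omega_1+\omega_2}$. If $A\geq 0$ then monotonicity combined with superadditivity gives $\delta_{\alpha'}\geq\delta_{\alpha_1+\alpha_2}\geq\delta_{\alpha_1}+\delta_{\alpha_2}$, forcing $Y\leq 0$ and $Y^{+}=0$. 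If $A<0$, the split
\[
Y \;=\; \bigl[\delta_{\alpha_1}+\delta_{\alpha_2}-\delta_{\alpha_1+\alpha_2}\bigr] + \bigl[\delta_{\alpha_1+\alpha_2}-\delta_{\alpha'}\bigr]
\]
bounds the first bracket by zero (superadditivity) and the second by $\delta'_\omega(\omega_*)|A|\leq K|A|$ (mean value theorem), with $K$ taken as the supremum of the derivative of $\delta_\omega$ over the compact range of shock strengths attainable in $\Omega$. The analogous bound for $Z^{+}$ is identical.

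It remains to apply the dichotomy of Proposition \ref{Interaction-Estimates}. In case (ii), $A,B\leq 0$, so $2Y^{+}+2Z^{+}\leq 2K(|A|+|B|)=-2K(A+B)$, and any $M\geq 2K$ works. In case (i), say $A=-\xi\leq 0$ with $0\leq B\leq C_0\xi$: then $Z^{+}=0$, $2Y^{+}\leq 2K\xi$, and $-M(A+B)=M(\xi-B)\geq M(1-C_0)\xi$, so any $M\geq 2K/(1-C_0)$ suffices; the mirror subcase is identical. Choosing $M=2K/(1-C_0)$, which depends only on $\Omega$, completes the proof. The main obstacle I anticipate is precisely the subcase in which $A>0$ (or $B>0$), where the outgoing shock in one family is stronger than the sum of the incoming ones and could \emph{a priori} generate a large entropy wave; it is exactly the superadditivity of $\delta_\omega$ that rules this out by forcing the corresponding $Y^{+}$ (resp.\ $Z^{+}$) to vanish.
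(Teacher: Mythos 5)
Your overall strategy mirrors the paper's: use superadditivity and a Lipschitz bound for $\omega\mapsto\delta_\omega$ on a compact range of strengths, then invoke the dichotomy of Proposition~\ref{Interaction-Estimates}. The explicit reduction to ``$2Y^{+}+2Z^{+}\leq -M(A+B)$'' is a genuine presentational clean-up, and your final constant $M=2K/(1-C_0)$ agrees with the paper's $M=(1-C_0)^{-1}\overline{M}$.

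There is, however, a sign error in your conservation identity. Across a $1$-shock the gas passes from the left (upstream) to the right (downstream) state in the shock frame, so $\Sigma$ \emph{increases} from left to right; across a $3$-shock the upstream side is the right, so $\Sigma$ \emph{decreases} from left to right. Tracing $\Sigma_M-\Sigma_L$ therefore gives $\delta_{\alpha_1}+\delta_1-\delta_{\beta_1}$ (note the minus), consistent with the paper's \eqref{Net-Entropy-Change-Conserved}, and the identity becomes
\begin{displaymath}
  \delta'=\delta_1+\delta_2+Y-Z,
\end{displaymath}
not $Y+Z$. Consequently the equality $|\delta'-\delta_1-\delta_2|=|Y+Z|$ in your chain is false; it should read $|\delta'-\delta_1-\delta_2|=|Y-Z|$. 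Fortunately the error is harmless, because the elementary inequality $|Y-Z|+Y+Z\leq 2Y^{+}+2Z^{+}$ holds just as well as the one you quoted (check the four sign cases of $Y,Z$). Once the identity is corrected, your chain
\begin{displaymath}
  |\delta'|-|\delta_1|-|\delta_2|+Y+Z\;\leq\;|Y-Z|+Y+Z\;\leq\;2Y^{+}+2Z^{+}
\end{displaymath}
is valid, and the rest of your argument — superadditivity forcing $Y^{+}=0$ when $A\geq 0$, the mean-value bound $Y^{+}\leq K|A|$ when $A<0$, and the two cases of the dichotomy — goes through unchanged and matches the paper's reasoning.
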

\begin{proof}
  Choose $C_{0}$ so that Proposition \ref{Interaction-Estimates}
  holds.  Since $\Omega$ is a compact set, let
  \begin{displaymath}
    \overline{\omega}=\sup\left\{\|(r_{1},s_{1})-(r_{2},s_{2})\|:(r_{1},s_{1}),(r_{2},s_{2})\in\Omega\right\}.
  \end{displaymath}
  Then the strength of the largest shock wave in $\Omega$ is bounded by
  $\overline{\omega}$.  Furthermore, let $M=\left(1-C_{0}\right)^{-1}\overline{M}$,
  where
  \begin{equation}\label{Overline-M}
    \overline{M}=2\frac{d [\Sigma-\Sigma_{L}]}{d\omega}(\overline{\omega}),
  \end{equation}
  which is twice the largest rate of change of $\Sigma$ for all
  shocks contained in $\Omega$.  Also, since $[\Sigma-\Sigma_{L}](\omega)$ is positive and convex up, we have for
  strengths, $\omega'\geq\omega_{1}+\omega_{2}$,
  $\delta_{\omega'}\geq\delta_{\omega_{1}}+\delta_{\omega_{2}}$.

  The proof will be split into two cases, one for each of the two cases from
  Proposition \ref{Interaction-Estimates}.  First let us assume that
  $A\leq 0$ and $B\leq 0$. i.e.
  \begin{displaymath}
    \alpha'-\alpha_{1}-\alpha_{2}=-\xi_{\alpha}\leq 0 \phantom{33}
    \textrm{and} \phantom{33}
    \beta'-\beta_{1}-\beta_{2}=-\xi_{\beta}\leq 0.
  \end{displaymath}
  We have, $\alpha_{1}+\alpha_{2}-\xi_{\alpha}=\alpha'$ and hence,
  $\delta_{(\alpha_{1}+\alpha_{2}-\xi_{\alpha})}=\delta_{\alpha'}$.
  It follows that
  \begin{displaymath}
    \delta_{\alpha_{1}}+\delta_{\alpha_{2}}-\frac{1}{2}\overline{M}\xi_{\alpha}\leq\delta_{\alpha_{1}+\alpha_{2}}-\frac{1}{2}\overline{M}\xi_{\alpha}\leq\delta_{\alpha'}.
  \end{displaymath}
  Rearranging,
  \begin{equation}\label{Delta-Alpha-Estimate}
    \delta_{\alpha_{1}}+\delta_{\alpha_{2}}-\delta_{\alpha'}\leq\frac{1}{2}\overline{M}\xi_{\alpha}\leq-\frac{1}{2}MA,
  \end{equation}
  and similarly,
  \begin{equation}\label{Delta-Beta-Estimate}
    \delta_{\beta_{1}}+\delta_{\beta_{2}}-\delta_{\beta'}\leq\frac{1}{2}\overline{M}\xi_{\beta}\leq-\frac{1}{2}MB.
  \end{equation}
  The right hand inequalities follow from the fact that
  $\overline{M}<M$.
  Also, the change in entropy across the two Riemann problems before
  and the resulting one are equal:
  \begin{equation}\label{Net-Entropy-Change-Conserved}
    \delta_{\alpha'}+\delta'-\delta_{\beta'}=\delta_{\alpha_{1}}+\delta_{1}-\delta_{\beta_{1}}+\delta_{\alpha_{2}}+\delta_{2}-\delta_{\beta_{2}}.
  \end{equation}
  Rearranging \eqref{Net-Entropy-Change-Conserved} and using the previous estimates \eqref{Delta-Alpha-Estimate} and \eqref{Delta-Beta-Estimate}, we find
  \begin{equation}\label{Estimate-Delta-MA}
    \left(\delta'-\delta_{1}-\delta_{2}\right)+\left(\delta_{\beta_{1}}+\delta_{\beta_{2}}-\delta_{\beta'}\right)=\left(\delta_{\alpha_{1}}+\delta_{\alpha_{2}}-\delta_{\alpha'}\right)\leq
    -\frac{1}{2}MA
  \end{equation}
  and
  \begin{equation}\label{Estimate-Delta-MB}
    \left(\delta'-\delta_{1}-\delta_{2}\right)+\left(\delta_{\alpha'}-\delta_{\alpha_{1}}-\delta_{\alpha_{2}}\right)=\left(\delta_{\beta'}-\delta_{\beta_{1}}-\delta_{\beta_{2}}\right)\geq
    \frac{1}{2}MB.
  \end{equation}
  Adding the inequality \eqref{Delta-Alpha-Estimate} to
  \eqref{Estimate-Delta-MA},
  \begin{displaymath}
    \left(\delta'-\delta_{1}-\delta_{2}\right)+\left(\delta_{\alpha_{1}}+\delta_{\alpha_{2}}-\delta_{\alpha'}\right)+\left(\delta_{\beta_{1}}+\delta_{\beta_{2}}-\delta_{\beta'}\right)\leq-\frac{1}{2}MA-\frac{1}{2}MA=-MA,
  \end{displaymath}
  and adding $-1$ times the inequality \eqref{Delta-Beta-Estimate}
  to  \eqref{Estimate-Delta-MB},
  \begin{displaymath}
    \left(\delta'-\delta_{1}-\delta_{2}\right)+\left(\delta_{\alpha'}-\delta_{\alpha_{1}}+\delta_{\alpha_{2}}\right)+\left(\delta_{\beta'}-\delta_{\beta_{1}}-\delta_{\beta_{2}}\right)\geq\frac{1}{2}MB+\frac{1}{2}MB=MB.
  \end{displaymath}
  Therefore, after multiplying the entire inequality by $-1$ we
  have
  \begin{displaymath}
    -\left(\delta'-\delta_{1}-\delta_{2}\right)+\left(\delta_{\alpha_{1}}+\delta_{\alpha_{2}}-\delta_{\alpha'}\right)+\left(\delta_{\beta_{1}}+\delta_{\beta_{2}}-\delta_{\beta'}\right)\leq
    -MB.
  \end{displaymath}
  Since $0\leq -MA$ and $0\leq -MB$ by assumption, it follows that
  \begin{displaymath}
    |\delta'-\delta_{1}-\delta_{2}|+\left(\delta_{\alpha_{1}}+\delta_{\alpha_{2}}-\delta_{\alpha'}\right)+\left(\delta_{\beta_{1}}+\delta_{\beta_{2}}-\delta_{\beta'}\right)\leq-M(A+B).
  \end{displaymath}
  Furthermore, since $|\delta'-\delta_{1}-\delta_{2}|\geq
  |\delta'|-|\delta_{1}|-|\delta_{2}|$, we deduce,
  \begin{displaymath}
    |\delta'|-|\delta_{1}|-|\delta_{2}|+\left(\delta_{\alpha_{1}}+\delta_{\alpha_{2}}-\delta_{\alpha'}\right)+\left(\delta_{\beta_{1}}+\delta_{\beta_{2}}-\delta_{\beta'}\right)\leq-M(A+B).
  \end{displaymath}
  This concludes the proof of the first case.

  Now, without loss of generality assume $A=-\xi\leq 0$ and $0\leq
  B\leq C_{0}\xi$.  The mirror case when $0\leq A$ is similar.  As
  before, we can obtain the estimates,
  \begin{displaymath}
    \delta_{\alpha_{1}}+\delta_{\alpha_{2}}-\delta_{\alpha'}\leq\frac{1}{2}\overline{M}\xi
  \end{displaymath}
  and
  \begin{equation}\label{Delta-Alpha-Estimate-2}
    \left(\delta'-\delta_{1}-\delta_{2}\right)+\left(\delta_{\alpha_{1}}+\delta_{\alpha_{2}}-\delta_{\alpha'}\right)+\left(\delta_{\beta_{1}}+\delta_{\beta_{2}}-\delta_{\beta'}\right)\leq\overline{M}\xi.
  \end{equation}
From \eqref{Net-Entropy-Change-Conserved} we have,
    \begin{displaymath}
      -\left(\delta'-\delta_{1}-\delta_{2}\right)+\left(\delta_{\alpha_{1}}+\delta_{\alpha_{2}}-\delta_{\alpha'}\right)+\left(\delta_{\beta'}-\delta_{\beta_{1}}+\delta_{\beta_{2}}\right)=0,
    \end{displaymath}
    and since $\beta'\geq\beta_{1}+\beta_{2}$, we have
    $\delta_{\beta_{1}}+\delta_{\beta_{2}}-\delta_{\beta'}\leq 0$,
    and so by adding this inequality twice,
    \begin{equation}\label{Delta-Beta-Estimate-2}
      -\left(\delta'-\delta_{1}-\delta_{2}\right)+\left(\delta_{\alpha_{1}}+\delta_{\alpha_{2}}-\delta_{\alpha'}\right)+\left(\delta_{\beta_{1}}+\delta_{\beta_{2}}-\delta_{\beta'}\right)\leq
      0.
    \end{equation}
Therefore, from \eqref{Delta-Alpha-Estimate-2} and
\eqref{Delta-Beta-Estimate-2},
    \begin{displaymath}
       |\delta'-\delta_{1}-\delta_{2}|+\left(\delta_{\alpha_{1}}+\delta_{\alpha_{2}}-\delta_{\alpha'}\right)+\left(\delta_{\beta_{1}}+\delta_{\beta_{2}}-\delta_{\beta'}\right)\leq \overline{M}\xi,
    \end{displaymath}
which as before, reduces to,
  \begin{displaymath}
    |\delta'|-|\delta_{1}|-|\delta_{2}|+\left(\delta_{\alpha_{1}}+\delta_{\alpha_{2}}-\delta_{\alpha'}\right)+\left(\delta_{\beta_{1}}+\delta_{\beta_{2}}-\delta_{\beta'}\right)\leq\overline{M}\xi.
  \end{displaymath}
 But,
 \begin{displaymath}
   \overline{M}\xi=M(1-C_{0})\xi=M(\xi-C_{0}\xi)\leq
   M(-A-B)=-M(A+B),
 \end{displaymath}
 where we used the fact $-C_{0}\xi\leq -B$ following from the
 assumption that $0\leq B\leq C_{0}\xi$ and $A=-\xi$.
\end{proof}

    \chapter{The Glimm Difference Scheme}\label{Glimm-Difference-Scheme}
    \thispagestyle{firstpage}
    \section{Introduction}
\indent \indent In $1965$, Glimm proved existence of solutions to
general systems of strictly hyperbolic conservation laws with
genuinely non-linear or linearly degenerate characteristic fields,
\cite{Glimm}. To obtain existence, Glimm needed to restrict to
initial data with sufficiently small total variation to avoid having
to rule out the possibility that the complicated, global nonlinear
structure of the conservation law might create finite time blow up
of the solution or approximation scheme. His method takes a
piecewise constant approximate solution at one time step and uses
numerous solutions to Riemann problems, defined at each point of
discontinuity, to evolve the solution to a later time. After the
approximate solution is brought forward in time, the solution is
randomly sampled and a new piecewise constant approximate solution
is obtained. A fascinating consequence is that one cannot choose any
sequence of sample points to choose the states used for the new
piecewise constant function at each time step, but rather must
sample outside a set of measure zero in the space of all possible
choices.

One way this scheme may break down for general systems of hyperbolic
conservation laws is that Riemann problems may not have solutions if
the initial left and right states are sufficiently far apart.  A
canonical example of this phenomenon occurs in the $p-$system which
models a classical isentropic gas in Lagrangian coordinates. For
this system, if the difference in velocity of the two initial states
is sufficiently large, all the gas will be pulled from the region in
between the two states forming a vacuum, \cite{Smoller}.  This
possible complication and issues with large scale non-linearities,
led Glimm to prove existence for initial data with small variation.
He showed that in this case, the total possible increase in
variation in the approximate solution is bounded by a corresponding
decrease in a quadratic functional. Thus, having a bound on the
total variation in the solution showed that the Riemann problems
used in evolving the approximate solution can be defined for all
time.

In our case we prove a large data existence theorem; there is no
restriction on the ``smallness" of the initial conditions. In our
existence proof, we will not need to use a quadratic functional to
bound the total variation, because the geometric structure of the
shock and rarefaction curves in $rs-$space do not allow the
approximate solution to behave badly in the large. In this section,
we will introduce the Glimm scheme and use it to construct solutions
to \eqref{System-Conservation}.

\bigskip

\section{Glimm Difference Scheme}
We say $U(x,t)$ is a weak solution of \eqref{System-Conservation}
with initial data $U_{0}(x)$, if for all
$\varphi\in\mathcal{C}_{0}^{1}\left[\mathbb{R}^{+},\mathbb{R}\right]$
the following holds:
\begin{displaymath}
    \int_{0}^{\infty}\!\!\int_{-\infty}^{\infty}U\varphi_{t}+F(U)\varphi_{x}dxdt+\int_{-\infty}^{\infty}U(x,0)\varphi(x,0)dx=0.
\end{displaymath}

We begin by partitioning space into intervals of length $\Delta x$
and time into intervals of length $\Delta t$. In order to keep
neighboring Riemann problems from colliding, we impose the following
CFL condition:
\begin{displaymath}
  \frac{\Delta x}{\Delta t}>1>|\lambda_{i}|, \phantom{333} i=1,2,3.
\end{displaymath}
Note for $1<\gamma<2$ this condition is satisfied since the
characteristic speeds \eqref{Characteristic-Speeds} are bounded
above and below by $1$ and $-1$.

We inductively define our approximate solution.  To begin suppose
that we have an approximate solution at time $t=n\Delta t$,
$U(x,n\Delta t)$, which is constant on the intervals, $\left(k\Delta
x,(k+2)\Delta x\right)$, where $k+n$ is odd.  At each point
$x=k\Delta x$ a Riemann problem is defined.  Solve each Riemann
problem for time $t=\Delta t$.  This evolves our approximate
solution from $t=n\Delta t$ to $t=(n+1)\Delta t$.  To finish, we
must construct a new piecewise constant function at time
$t=(n+1)\Delta t$.  Choose $a\in\left[-1,1\right]$ and define,
$U(x,(n+1)\Delta t)=U((k+1+a)\Delta x, (n+1)\Delta t-)$ for
$x\in(k\Delta x,(k+2)\Delta x)$ and $k+n+1$ odd.  The term $\Delta
t-$ denotes the lower limit.

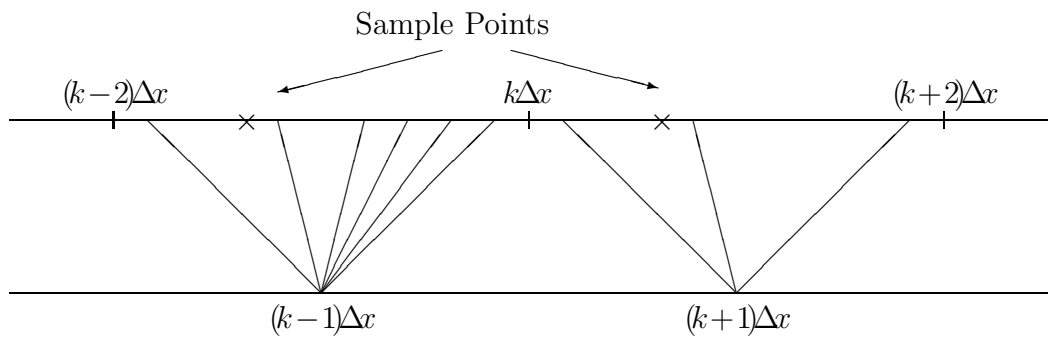
\begin{figure}
  \begin{center}
    \setlength{\unitlength}{2.3mm}
    \begin{picture}(0,20)
      \put(-30,13){\line(1,0){60}}
      \put(-30,3){\line(1,0){60}}
      \put(-12,3){\line(-1,1){10}}
      \put(-12,3){\line(-1,4){2.5}}
      \put(-12,3){\line(1,4){2.5}}
      \put(-12,3){\line(1,2){5}}
      \put(-12,3){\line(3,4){7.5}}
      \put(-12,3){\line(1,1){10}}
      \put(12,3){\line(-1,1){10}}
      \put(12,3){\line(-1,4){2.5}}
      \put(12,3){\line(1,1){10}}
      \put(-15,1){$(\!k\!-\!1\!)\!\Delta\! x$}
      \put(9,1){$(\!k\!+\!1\!)\!\Delta\! x$}
      \put(-27,14){$(\!k\!-\!2\!)\!\Delta\! x$}
      \put(21,14){$(\!k\!+\!2\!)\!\Delta\! x$}
      \put(-1.5,14){$k\!\Delta\! x$}
      \put(-24,12.5){\line(0,1){1}}
      \put(0,12.5){\line(0,1){1}}
      \put(24,12.5){\line(0,1){1}}
      \put(-17,12.4){$\times$}
      \put(7,12.4){$\times$}
      \put(-10,18){Sample Points}
      \put(-5,17){\vector(-4,-1){9.5}}
      \put(-1,17){\vector(4,-1){8.5}}
    \end{picture}
    \caption{Construction of Piecewise Constant Function\label{Approx-Piecewise}}
  \end{center}
\end{figure}

To begin this process at $t=0$, obtain a piecewise constant function
from the initial data $U_{0}(x)$ by again choosing
$a\in\left[-1,1\right]$ and defining, $U(x,0)=U_{0}((k+a)\Delta x)$
for $k$ odd.

Consider, $\theta\in\prod_{i=0}^{\infty}\left[-1,1\right]$.  In
other words,
$\theta=\left(\theta_{0},\theta_{1},\ldots,\theta_{n},\ldots\right)$
with $\theta_{i}\in\left[-1,1\right]$.  Then, for initial data, we
say $U_{\theta,\Delta x}(x,t)$ is the approximate solution given by
a mesh size of $\Delta x$ with sampling points at the
$n^{\textrm{th}}$ time step given by $\theta_{n}$.

\begin{figure}
  \begin{center}
    \setlength{\unitlength}{2.3mm}
    \begin{picture}(0,23)

      \put(-30,23){\line(1,0){60}}
      \put(-30,13){\line(1,0){60}}
      \put(-30,3){\line(1,0){60}}
      \put(-12,3){\line(-1,1){10}}
      \put(-12,3){\line(-1,4){2.5}}
      \put(-12,3){\line(1,4){2.5}}
      \put(-12,3){\line(1,2){5}}
      \put(-12,3){\line(3,4){7.5}}
      \put(-12,3){\line(1,1){10}}
      \put(0,13){\line(-1,1){10}}
      \put(0,13){\line(-1,4){2.5}}
      \put(0,13){\line(1,4){2.5}}
      \put(0,13){\line(1,2){5}}
      \put(0,13){\line(3,4){7.5}}
      \put(0,13){\line(1,1){10}}
      \put(12,3){\line(-1,1){10}}
      \put(12,3){\line(-1,4){2.5}}
      \put(12,3){\line(1,1){10}}

      \put(-24,13){\line(1,3){3.33}}
      \put(-24,13){\line(-1,4){2.5}}
      \put(-24,13){\line(1,1){10}}
      \put(24,13){\line(-1,1){10}}
      \put(24,13){\line(-1,4){2.5}}
      \put(24,13){\line(1,2){5}}

      \put(-15,1){$(\!k\!-\!1\!)\!\Delta\! x$}
      \put(9,1){$(\!k\!+\!1\!)\!\Delta\! x$}
      \put(-27,1){$(\!k\!-\!2\!)\!\Delta\! x$}
      \put(21,1){$(\!k\!+\!2\!)\!\Delta\! x$}
      \put(-1.5,1){$k\!\Delta\! x$}

      \multiput(-24,2.5)(0,10){3}{\line(0,1){1}}
      \multiput(0,2.5)(0,10){3}{\line(0,1){1}}
      \multiput(24,2.5)(0,10){3}{\line(0,1){1}}

      \put(-17,12.5){$\times$}
      \put(12,12.5){$\times$}
      \put(1,22.5){$\times$}
      \put(-2,2.5){$\times$}
      \put(-28,2.5){$\times$}
      \put(28,22.5){$\times$}

      \thicklines
      \qbezier(-27.5,3)(-22.5,7.75)(-16.5,13)
      \qbezier(-16.5,13)(-8.25,7.75)(-1.5,3)
      \qbezier(-1.5,3)(6,8)(12.5,13)
      \qbezier(12.5,13)(19.5,17.5)(28.5,23)
      \qbezier(-27.5,3)(-28.5,3.75)(-30,5)
      \qbezier(28.5,23)(29.5,22.32)(30,22)

      \put(1,6.5){$J$}

    \end{picture}
    \caption{I-Curve $J$.\label{Figure-I-Curve-J}}
  \end{center}
\end{figure}
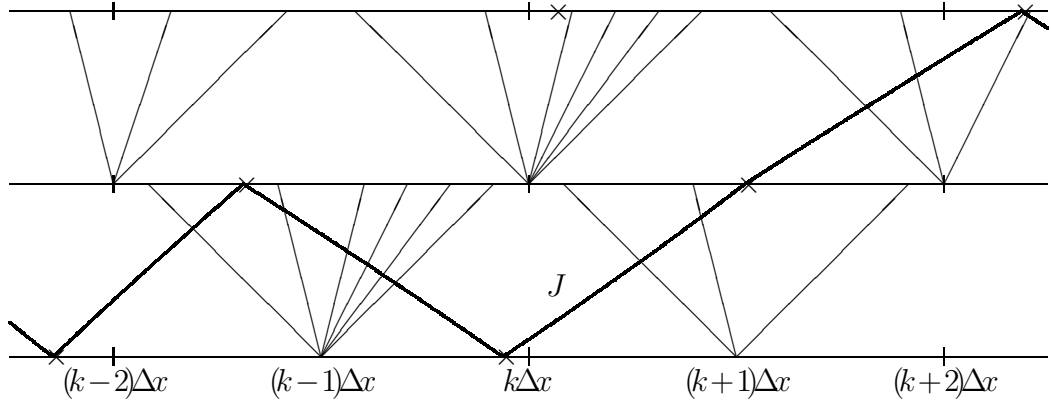

In order to estimate the change in the variation of our approximate
solutions, we will define piecewise linear, space-like curves,
called I-curves, which connect sample points at different time
levels.  If an I-curve $J$ passes through the sampling point
$((k+\theta_{n})\Delta x,n \Delta t)$, then on the right $J$ is only
allowed to connect to $((k+1+\theta_{n\pm 1})\Delta x,(n \pm 1)
\Delta t)$ and on the left to $((k-1+\theta_{n\pm 1})\Delta x,(n\pm
1) \Delta t)$.

We consider two functionals defined on $I-$curves and will analyze
how the functionals change as we change from one $I-$curve to
another. This will allow us to estimate the change in variation of
the approximate solution as it is evolved using the Glimm scheme. We
define for an $I-$curve $J$:
\begin{equation}\label{I-Curve-Fntl-F}
  F(J)=\sum_{J}\alpha_{i}+\sum_{J}\beta_{i}+V
\end{equation}
and
\begin{equation}\label{I-Curve-Fntl-L}
  L(J)=\sum_{J}\left(\alpha_{i}-M_{0}\delta_{\alpha_{i}}\right)+\sum_{J}\left(\beta_{i}-M_{0}\delta_{\beta_{i}}\right)-M_{0}\sum_{j}|\delta|+V,
\end{equation}
where the sums are taken over all waves, or fractions of them in the
case of rarefaction waves, that cross $J$. The constant $M_{0}$ will
be chosen later and $V=Var\left\{U_{0}(\cdot)\right\}$ is the
variation of the initial data.

The main problem in our analysis is to show that the variation in
the entropy waves stays bounded for all time.  To do this we need to
bound the possible change in $\Sigma$ across shock waves.  This is
accomplished by first showing that the variation in $r$ and $s$
stays finite for all time.  This implies that all the interactions,
as projected onto the $rs-$plane, occur in a compact set.  Thus,
there is a largest possible shock strength in this compact set, and
using the fact that the derivative of the entropy change as a
function of wave strength is monotone increasing, there is a
constant such that the entropy change is bounded by a constant times
the wave strength.  Moreover, we can then use Proposition
\ref{Entropy-Interaction} to estimate the increase in the variation
in entropy in our approximate solutions.

\bigskip

\section{Estimates on Approximate Solutions}

For initial data $U_{0}(x)$ and corresponding approximate solution
$U_{\theta,\Delta x}(x,t)$, define $\overline{U}_{0}(x)$ and
$\overline{U}_{\theta,\Delta x}(x,t)$ as the initial data and
approximate solutions viewed as functions of $r$ and $s$ only.  The
first estimate will show that the variation in the Riemann
invariants across an I-curve $J$ is bounded above by the functional
$F(\cdot)$ on $J$.

    \begin{proposition}\label{Var-rs-J-Bndd-FJ}
Let $\overline{U}_{0}(\cdot)$ be of finite variation. If the
approximate solution $\overline{U}_{\theta,\Delta x}(x,t)$ is
defined on an I-curve $J$, then,
        \begin{equation}\label{Rarefaction-Bndd-by-Shock}
          Var_{rs}(J)\leq 4F(J).
        \end{equation}
    \end{proposition}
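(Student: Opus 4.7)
The plan is to decompose $Var_{rs}(J)$ into contributions from the individual shock and rarefaction waves crossing $J$, bound each contribution using the structure of the shock and rarefaction curves in the $rs$-plane given by Lemma \ref{Lemma-Smoller-Temple}, and then control the total rarefaction strength on $J$ by a telescoping argument tied to the initial data. The factor $4$ is not sharp: the argument actually gives $Var_{rs}(J)<2(1+C_0)\bigl(\sum_J\alpha_i+\sum_J\beta_i\bigr)+2V$.

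\textbf{Step 1.} In the $rs$-plane, rarefaction curves are parallel to the coordinate axes, so a $1$-rarefaction of strength $\mu_i$ contributes $\mu_i$ to $Var\{r;J\}$ and $0$ to $Var\{s;J\}$, and symmetrically for $3$-rarefactions. By Lemma \ref{Lemma-Smoller-Temple}, the shock curves have slope bounded by $C_0<1$, yielding $|\Delta s|\le C_0\alpha_i$ across a $1$-shock and $|\Delta r|\le C_0\beta_i$ across a $3$-shock. Summing over all waves crossing $J$ gives
\begin{equation*}
  Var\{r;J\} \,\le\, \sum_J\alpha_i + \sum_J\mu_i + C_0\sum_J\beta_i,\qquad
  Var\{s;J\} \,\le\, C_0\sum_J\alpha_i + \sum_J\beta_i + \sum_J\eta_i.
\end{equation*}

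\textbf{Step 2.} To bound the rarefaction totals, I would write the signed difference of $r$ between the two endpoints of $J$ as a telescoping sum of signed $r$-jumps across the waves on $J$: a $1$-shock contributes $-\alpha_i$, a $1$-rarefaction contributes $+\mu_i$, a $3$-rarefaction contributes $0$, and a $3$-shock contributes some value in $[-C_0\beta_i,0]$ (the sign following from $dr/ds\ge 0$ together with $\Delta s=-\beta_i$ across a $3$-shock). The CFL condition ensures that the endpoints of $J$ lie in the as-yet-undisturbed initial data, so their $r$-values agree with those of $\overline{U}_0$, and in particular $|r(x_R)-r(x_L)|\le V$. Solving the telescoping identity for $\sum_J\mu_i$ yields
\begin{equation*}
  \sum_J\mu_i \,\le\, V + \sum_J\alpha_i + C_0\sum_J\beta_i,
\end{equation*}
and the same argument applied to $s$ gives $\sum_J\eta_i\le V+\sum_J\beta_i+C_0\sum_J\alpha_i$. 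Substituting these into the Step 1 estimates and using $C_0<1$ produces
\begin{equation*}
  Var_{rs}(J) \,\le\, 2(1+C_0)\!\left(\sum_J\alpha_i+\sum_J\beta_i\right)+2V \,<\, 4\!\left(\sum_J\alpha_i+\sum_J\beta_i\right)+4V \,=\, 4F(J).
\end{equation*}

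The main obstacle I anticipate is not the arithmetic but the careful bookkeeping of signed versus absolute jumps in $r$ and $s$ across the $3$- and $1$-shocks (which are not parallel to the Riemann-invariant coordinate axes), together with the clean invocation of the CFL condition and the piecewise-constant construction of $U_{\theta,\Delta x}$ to identify the endpoint values of $r$ and $s$ on $J$ with those of the initial data. Once these are settled, the proof reduces to the computations above.
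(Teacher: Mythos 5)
Your proposal is correct and follows essentially the same route as the paper's proof: decompose the $r$- and $s$-variation into positive and negative contributions from shocks and rarefactions, then use the fact that the endpoint values of $J$ agree with those of the initial data to bound the rarefaction totals $\sum_J\mu_i$ and $\sum_J\eta_i$ in terms of the shock totals plus $V$. The only cosmetic difference is that you invoke the sharper slope bound $C_0<1$ to get the intermediate estimate $2(1+C_0)(\sum\alpha_i+\sum\beta_i)+2V$, whereas the paper simply uses the crude bound that the $r$-jump across a $3$-shock is at most $\beta_i$ (slope $\le 1$), which already suffices for the stated factor $4$; note that the constant you should cite here is the universal one from Lemma~\ref{Lemma-Smoller-Temple}(3) rather than the $\Omega$-dependent $C_0$ of \eqref{Definition-C_0}, since the compact set $\Omega$ is not established until Corollary~\ref{Corollary-Compact-Set}, but this does not affect the conclusion.
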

    \begin{proof}
Let $Var^{-}_{r}(J)$ denote the variation across $J$ given by a
decrease in $r$.  The only waves that contribute to the decrease in
$r$ are $1$ and $3-$shocks.  Furthermore, we have
      \begin{equation}\label{eqn-VarrMinus-al-be}
        Var_{r}^{-}(J)\leq \sum_{J}\alpha_{i}+\sum_{J}\beta_{i},
      \end{equation}
where the sum is over all waves of the particular type crossing $J$.
We can similarly define $Var^{+}_{r}(J)$ as the variation given by
increases of $r$ across elementary waves. The only increase is given
by $1-$rarefaction waves,
      \begin{equation}\label{eqn-VarrPlus-mu}
        Var_{r}^{+}(J)=\sum_{J}\mu_{i}.
      \end{equation}
Following this line of reasoning for $s$, we also have
      \begin{equation}\label{eqn-VarsMinus-al-be}
        Var_{s}^{-}(J)\leq \sum_{J}\alpha_{i}+\sum_{J}\beta_{i}
      \end{equation}
and
      \begin{equation}\label{eqn-VarsPlus-eta}
        Var_{s}^{+}(J)=\sum_{J}\eta_{i}.
      \end{equation}
The initial data $\overline{U}_{0}$ may be written as a function of
the Riemann invariants $r$ and $s$,
$\overline{U}_{0}(x)=(r_{0}(x),s_{0}(x))$. Since
$\overline{U}_{0}(\cdot)$ is of finite variation, the following
limits must exist:
      \begin{displaymath}
        \lim_{x\rightarrow \pm\infty}r_{0}(x)=r^{\pm}, \phantom{444} \lim_{x\rightarrow
        \pm\infty}s_{0}(x)=s^{\pm}.
      \end{displaymath}
Indeed, let $\{x_{n}\}_{n=0}^{\infty}$ be an increasing sequence of
real numbers such that $x_{n}\rightarrow \infty$ as $n \rightarrow
\infty$. Then,
\begin{displaymath}
  \sum_{n=1}^{\infty}|r_{0}(x_{n})-r_{0}(x_{n-1})|\leq
  Var\{\overline{U}_{0}(\cdot)\}<\infty.
\end{displaymath}
Hence the sequence $r_{0}(x_{n})$ is Cauchy, which converges to a
finite limit $r^{+}$.  The other cases are entirely similar.

For any I-curve $J$, the end states at $\pm\infty$ are given by
$(r^{\pm},s^{\pm})$. From this we obtain
    \begin{displaymath}
      |Var^{+}_{r}(J)-Var^{-}_{r}(J)|=|r^{+}-r^{-}|\leq V,
    \end{displaymath}
and hence,
    \begin{displaymath}
      Var^{+}_{r}(J)\leq Var^{-}_{r}(J)+V.
    \end{displaymath}
Using $(\ref{eqn-VarrMinus-al-be})$ and $(\ref{eqn-VarrPlus-mu})$,
    \begin{displaymath}
      \sum_{J}\mu_{i}\leq \sum_{J}\alpha_{i}+\sum_{J}\beta_{i}+V.
    \end{displaymath}
Similarly from $(\ref{eqn-VarsMinus-al-be})$ and
$(\ref{eqn-VarsPlus-eta})$,
    \begin{displaymath}
      \sum_{J}\eta_{i}\leq \sum_{J}\alpha_{i}+\sum_{J}\beta_{i}+V.
    \end{displaymath}
Combining these together we have,
    \begin{displaymath}
      \sum_{J}\mu_{i}+\sum_{J}\eta_{i} \leq 2\left(\sum_{J}\alpha_{i}+\sum_{J}\beta_{i}+V\right).
    \end{displaymath}
Thus,
    \begin{eqnarray}
      Var_{rs}(J) & \leq &
      2\left(\sum_{J}\alpha_{i}+\sum_{J}\beta_{i}\right)+\sum_{J}\mu_{i}+\sum_{J}\eta_{i},\nonumber\\
                  & \leq & 4\left(\sum_{J}\alpha_{i}+\sum_{J}\beta_{i}\right)+2V,\nonumber\\
                  & \leq & 4\left(\sum_{J}\alpha_{i}+\sum_{J}\beta_{i}+V\right),\nonumber\\
                  & \leq & 4F(J).\phantom{\left(\sum_{J}\alpha_{i}\right)}\nonumber
    \end{eqnarray}
    \end{proof}

    We will now show that the functional $F(\cdot)$ on the I-curves
    is non-increasing.  We define a partial ordering on the I-curves
    by saying that $J\prec J'$ if the curve $J'$ never lies below
    the curve $J$.  Furthermore, we say that $J'$ is an immediate
    successor to $J$ if $J\prec J'$ and $J$ and $J'$ share all the same sample points
    except for one.  It is clear that for any pair of I-curves such
    that $J\prec J'$, there is a sequence of immediate successors
    that begins at $J$ and ends at $J'$.  The next proposition shows
    that if our approximate solution is defined on an I-curve, it
    can be defined for all following I-curves.

\begin{proposition}\label{I-Curves-Induct-rs}
    Let $J$ and $J'$ be $I-$curves, $J\prec J'$, and suppose that $J$ is in the
    domain of definition of $\overline{U}_{\theta,\Delta x}$. If
    $F(J)<\infty$, then $J'$ is in the domain of definition of $\overline{U}_{\Delta
    x,\theta}$, and $F(J')\leq F(J)$. Moreover, if
    $Var_{rs}\left\{U_{0}(\cdot)\right\}<\infty$
    then $\overline{U}_{\theta,\Delta x}$ can be defined for $t\geq 0$.
\end{proposition}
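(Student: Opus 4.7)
The plan is to reduce the statement to the case of immediate successors and apply the wave interaction estimates of Proposition \ref{Interaction-Estimates}. By the chain argument on the partial order, it suffices to prove: if $J$ is in the domain of $\overline{U}_{\theta,\Delta x}$, $F(J)<\infty$, and $J'$ is an immediate successor of $J$, then $J'$ is also in the domain and $F(J')\leq F(J)$. The general statement $J\prec J'$ then follows by iterating along a finite sequence of immediate successors joining $J$ to $J'$.

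For a single immediate successor step, $J'$ differs from $J$ at exactly one vertex, which corresponds to evolving two neighboring Riemann problems $\langle U_L,U_M\rangle$ and $\langle U_M,U_R\rangle$ into the single Riemann problem $\langle U_L,U_R\rangle$. All waves crossing either $J$ or $J'$ away from that vertex are identical, so the only change in $F$ comes from the change in shock strengths at the interaction: $F(J')-F(J)=A+B$, where $A=\alpha'-\alpha_1-\alpha_2$ and $B=\beta'-\beta_1-\beta_2$. By Proposition \ref{Interaction-Estimates}, either $A\leq 0$ and $B\leq 0$ (so $A+B\leq 0$), or else (say) $A=-\xi\leq 0$ with $0\leq B\leq C_0\xi$, in which case $A+B\leq -\xi+C_0\xi=-(1-C_0)\xi\leq 0$ since $C_0<1$. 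In either case $F(J')\leq F(J)$, which is the desired monotonicity.

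To see that $J'$ lies in the domain of definition, note that $F(J)<\infty$ together with Proposition \ref{Var-rs-J-Bndd-FJ} gives $Var_{rs}(J)\leq 4F(J)<\infty$, so all states on $J$ lie in a bounded region of $rs$-space. Adding the compact bound on the endpoints $(r^\pm,s^\pm)$ determined by the initial data, the states encountered in the two Riemann problems at the vertex lie in a fixed simply connected compact set $\Omega\subset rs$-space. This is precisely the setting in which Proposition \ref{Interaction-Estimates} produces the constant $C_0<1$, and it is also the setting in which the Riemann problem $\langle U_L,U_R\rangle$ is globally solvable by the earlier Riemann problem theorem; hence $J'$ lies in the domain of $\overline{U}_{\theta,\Delta x}$.

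Finally, for the global definition from finite initial $rs$-variation: the base I-curve $J_0$ just above $t=0$ carries only the waves emitted by the Riemann problems determined by the piecewise-constant sampling of $U_0$, and the total strength of those waves is controlled by the initial variation, giving $F(J_0)<\infty$. Applying the monotonicity inductively along immediate successors shows that every I-curve $J$ lying above $J_0$ satisfies $F(J)\leq F(J_0)<\infty$, which by the previous paragraph guarantees that $\overline{U}_{\theta,\Delta x}$ can be extended past each vertex and thus is defined for all $t\geq 0$. The main technical point to be careful about is verifying that the compact set $\Omega$ can be chosen once and for all, independently of the I-curve, so that a single constant $C_0$ works throughout the induction; this is exactly what the uniform bound $F(\cdot)\leq F(J_0)$ combined with Proposition \ref{Var-rs-J-Bndd-FJ} supplies.
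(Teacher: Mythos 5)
Your proof is correct and follows essentially the same strategy as the paper: reduce to immediate successors, show $F(J')-F(J)=A+B\leq 0$ via Proposition \ref{Interaction-Estimates}, use Proposition \ref{Var-rs-J-Bndd-FJ} to keep the solution in its domain, and propagate the finiteness of $F$ from the base $I$-curve. The only difference is that you spell out the two cases of the interaction estimate and flag the uniformity of the compact set $\Omega$, details the paper leaves implicit.
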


\begin{figure}
  \begin{center}
    \setlength{\unitlength}{2.3mm}
    \begin{picture}(0,23)

      \put(-30,23){\line(1,0){60}}
      \put(-30,13){\line(1,0){60}}
      \put(-30,3){\line(1,0){60}}
      \put(-12,3){\line(-1,1){10}}
      \put(-12,3){\line(-1,4){2.5}}
      \put(-12,3){\line(1,4){2.5}}
      \put(-12,3){\line(1,2){5}}
      \put(-12,3){\line(3,4){7.5}}
      \put(-12,3){\line(1,1){10}}
      \put(0,13){\line(-1,1){10}}
      \put(0,13){\line(-1,4){2.5}}
      \put(0,13){\line(1,4){2.5}}
      \put(0,13){\line(1,2){5}}
      \put(0,13){\line(3,4){7.5}}
      \put(0,13){\line(1,1){10}}
      \put(12,3){\line(-1,1){10}}
      \put(12,3){\line(-1,4){2.5}}
      \put(12,3){\line(1,1){10}}

      \put(-24,13){\line(1,3){3.33}}
      \put(-24,13){\line(-1,4){2.5}}
      \put(-24,13){\line(1,1){10}}
      \put(24,13){\line(-1,1){10}}
      \put(24,13){\line(-1,4){2.5}}
      \put(24,13){\line(1,2){5}}

      \put(-15,1){$(\!k\!-\!1\!)\!\Delta\! x$}
      \put(9,1){$(\!k\!+\!1\!)\!\Delta\! x$}
      \put(-27,1){$(\!k\!-\!2\!)\!\Delta\! x$}
      \put(21,1){$(\!k\!+\!2\!)\!\Delta\! x$}
      \put(-1.5,1){$k\!\Delta\! x$}

      \multiput(-24,2.5)(0,10){3}{\line(0,1){1}}
      \multiput(0,2.5)(0,10){3}{\line(0,1){1}}
      \multiput(24,2.5)(0,10){3}{\line(0,1){1}}

      \put(-17,12.5){$\times$}
      \put(12,12.5){$\times$}
      \put(1,22.5){$\times$}
      \put(-2,2.5){$\times$}
      \put(-28,2.5){$\times$}
      \put(28,22.5){$\times$}

      \thicklines
      \qbezier(-27.5,3)(-22.5,7.75)(-16.5,13)
      \qbezier(-16.5,13)(-7.5,18)(1.5,23)
      \qbezier(-16.5,13)(-8.25,7.75)(-1.5,3)
      \qbezier(1.5,23)(7,18)(12.5,13)
      \qbezier(-1.5,3)(6,8)(12.5,13)
      \qbezier(12.5,13)(19.5,17.5)(28.5,23)
      \qbezier(-27.5,3)(-28.5,3.75)(-30,5)
      \qbezier(28.5,23)(29.5,22.32)(30,22)

      \put(-12,17){$J'$}
      \put(1,6.5){$J$}

    \end{picture}
    \caption{Immediate Successor I-Curves $J'$ and $J$.\label{Figure-Immediate-Successor}}
  \end{center}
\end{figure}
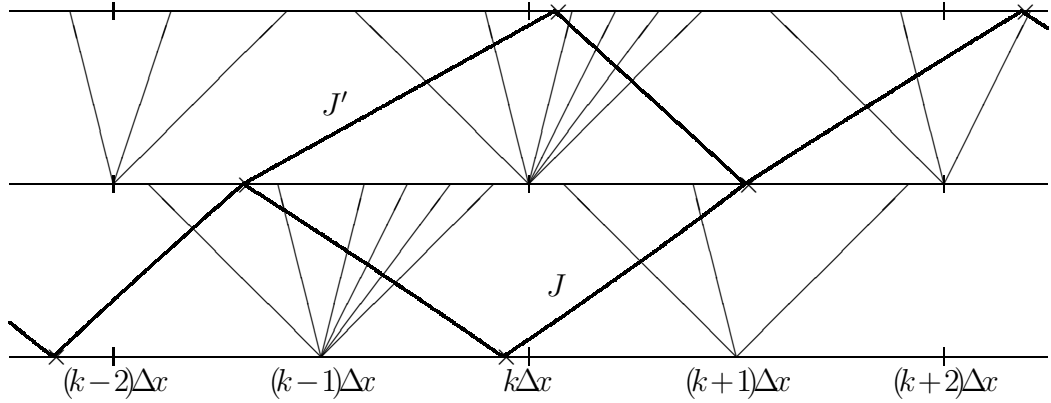

    \begin{proof}
      We proceed by induction.  Suppose first that $J'$ is an
      immediate successor to $J$.  Then the difference $F(J')-F(J)$,
      is given by the change in shock wave strengths across the
      diamond enclosed by $J'$ and $J$. See Figure
      \ref{Figure-Immediate-Successor}. This is a consequence of
      the fact that the waves the head into the diamond from the
      left and right solve the same Riemann problem as the outgoing
      waves in the new single Riemann problem.  If we denote
      $J'_{0}$ and $J_{0}$ as the diamond portion of $J'$ and $J$,
      we have,
      \begin{eqnarray}
        F(J')-F(J) & = &
        \sum_{J'}\alpha_{i}+\sum_{J'}\beta_{i}+V-\left(\sum_{J}\alpha_{i}+\sum_{J}\beta_{i}+V\right),\nonumber\\
                   & = & \sum_{J_{0}'}\alpha_{i}+\sum_{J_{0}'}\beta_{i}-\sum_{J_{0}}\alpha_{i}-\sum_{J_{0}}\beta_{i},\nonumber\\
                   & = &
                   \left(\alpha'-\alpha_{1}-\alpha_{2}\right)+\left(\beta'-\beta_{1}-\beta_{2}\right),\nonumber\\
                   & = & A+B\leq 0.\nonumber
      \end{eqnarray}
      The last line follows from Proposition \ref{Interaction-Estimates}.
      Thus,
      $F(J')\leq F(J)$ for immediate successors.  For any a general
      $J$ and $J'$ such that $J\prec J'$, we produce a sequence of immediate successors
      that take $J$ to $J'$.  At each step the functional $F$ is non-increasing,
      thus $F(J')\leq F(J)$ continues to hold.

      By Proposition \ref{Var-rs-J-Bndd-FJ},
      $Var_{rs}(J')\leq 4F(J')\leq 4F(J)$, so, $J'$ is in the domain
      of definition of $\overline{U}_{\theta,\Delta x}$.  Moreover,
      if $Var_{rs}\left\{\overline{U}_{0}(\cdot)\right\}<\infty$,
      then $Var_{rs}(\mathbf{0})<\infty$ for the unique $I-$curve $\mathbf{0}$ that
      lies along the line $t=0$.  In order to show that
      $\overline{U}_{\Delta x,\theta}$ can be defined for $t\geq 0$,
      we must show that
      $Var_{rs}\left\{\overline{U}_{\theta,\Delta x}(\cdot,t)\right\}<\infty$ for
      all time.  But, this condition is equivalent to showing the
      variation across any $I-$curve $J$ is always finite.  Since
      for any $I-$curve $J$,
\begin{displaymath}
Var_{rs}(J)\leq 4F(J)\leq 4F(\mathbf{0})\leq 8
Var_{rs}\left\{U_{0}(\cdot)\right\},
\end{displaymath}
the result follows.
    \end{proof}

Again, Proposition \ref{I-Curves-Induct-rs} shows that the variation
of our approximate solution in the variables $r$ and $s$ is finite.
Thus, there exists a compact set in the $rs-$plane that contains all
the interactions in our approximate solution.

\begin{corollary}\label{Corollary-Compact-Set}
  Suppose that $Var_{rs}\left\{U_{0}(\cdot)\right\}<\infty$.  Then
  there exists a simply connected compact set $\Omega$ in the $rs-$plane such that
  all possible interactions are contained in $\Omega$.
\end{corollary}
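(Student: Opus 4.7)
The plan is to leverage the uniform bound on the variation of the Riemann invariants along $I$-curves that was established in Proposition \ref{I-Curves-Induct-rs}, combined with the existence of fixed limits at $\pm\infty$ for the initial data, to enclose every state realized by the approximate solution inside a bounded region of the $rs$-plane.

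First I would recall that, by the proof of Proposition \ref{Var-rs-J-Bndd-FJ}, the finite-variation assumption on the initial data guarantees that the limits $r^{\pm} = \lim_{x\to\pm\infty} r_0(x)$ and $s^{\pm} = \lim_{x\to\pm\infty} s_0(x)$ both exist and are finite. Because an $I$-curve extends to spatial infinity and the approximate solution agrees with the initial data outside a finite region at each time step, every $I$-curve $J$ in the domain of definition of $\overline{U}_{\theta,\Delta x}$ has these same two limits as its left and right endpoints in the $rs$-plane.

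Next, by Proposition \ref{I-Curves-Induct-rs}, for every such $I$-curve $J$ we have the uniform estimate
\begin{displaymath}
  Var_{rs}(J) \leq 4F(J) \leq 4F(\mathbf{0}) \leq 8\,Var_{rs}\{U_{0}(\cdot)\} =: K < \infty.
\end{displaymath}
Since any state $(r,s)$ appearing in the approximate solution lies on some $I$-curve, and the variation of $r$ and $s$ along that curve measured from the left endpoint $(r^{-},s^{-})$ is at most $K$, we obtain
\begin{displaymath}
  |r - r^{-}| + |s - s^{-}| \leq K.
\end{displaymath}
Thus every state realized by $\overline{U}_{\theta,\Delta x}$, for every mesh size $\Delta x$ and every sampling sequence $\theta$, lies in the closed $\ell^{1}$-ball of radius $K$ centered at $(r^{-},s^{-})$.

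Finally I would define $\Omega$ to be this closed ball (or equivalently any closed rectangle containing it), which is clearly simply connected and compact in the $rs$-plane. Since each elementary wave interaction takes place between states belonging to the approximate solution, and all such states lie in $\Omega$, every interaction is contained in $\Omega$, as required. There is no genuine obstacle here; the entire content of the corollary is the translation of the variation bound from Proposition \ref{I-Curves-Induct-rs} into a pointwise enclosure statement, using the existence of fixed limits at $\pm\infty$ as the anchor point.
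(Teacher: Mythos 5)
Your proposal is correct and follows essentially the same route as the paper: both use the uniform bound $Var_{rs}(J)\leq 4F(\mathbf{0})\leq 8\,Var_{rs}\{U_{0}(\cdot)\}$ coming from Propositions \ref{Var-rs-J-Bndd-FJ} and \ref{I-Curves-Induct-rs}, and both anchor the enclosing ball at the left limit state $(r^{-},s^{-})$ of the initial data. The only cosmetic difference is the radius (you use $K$ where the paper conservatively takes $2N$), and your explicit remark that every $I$-curve shares the same end-states at $\pm\infty$ makes the argument slightly more careful than the paper's phrasing, but the substance is identical.
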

\begin{proof}
From Proposition \ref{Var-rs-J-Bndd-FJ} and Proposition
\ref{I-Curves-Induct-rs} we know that for any I-curve $J$,
  \begin{displaymath}
    Var_{rs}(J)<4F(J)<4F(\mathbf{0})<8Var_{rs}\left\{U_{0}(\cdot)\right\}=N<\infty.
  \end{displaymath}
  Thus, the distance between any to states occurring anywhere in our
  approximate solution is bounded by $N$.  Consider the left limit
  state of $\overline{U}_{0}(\cdot)$, $(r^{-},s^{-})$.  Therefore, all
  states must be contained within, $B_{2N}(r^{-},s^{-})$, the ball of radius $2N$ centered
  around $(r^{-},s^{-})$.
\end{proof}

 Now, we show that the variation of our
approximate solution, including the variation in $\Sigma$, is
bounded above by the functional $L(\cdot)$.

    \begin{proposition}\label{Var-Bndd-LJ}
Suppose $Var\left\{U_{0}(\cdot)\right\}<\infty$ and $J$ is an
$I-$curve that is in the domain of definition of $U_{\theta,\Delta
x}$. Then there exists constants $M_{0}>0$ and $K>0$, independent of
$\Delta x$ and $\theta$, such that,
      \begin{equation}
        Var(J)\leq K\cdot L(J).
      \end{equation}
    \end{proposition}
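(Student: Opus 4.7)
The plan is to decompose $Var(J)$ coordinate-wise and dominate each piece by an appropriate part of $L(J)$, using the compactness statement from Corollary \ref{Corollary-Compact-Set} to convert entropy jumps across shocks into linear bounds on shock strengths.

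First, by Proposition \ref{Var-rs-J-Bndd-FJ}, $Var_{rs}(J) \leq 4 F(J)$, so the variation in $r$ and $s$ is already controlled by the shock-strength sum inside $L(J)$. For the variation in the third coordinate, I observe that along $J$ each $1$- or $3$-shock of strength $\omega$ contributes a jump of size $\delta_{\omega}$ in $\Sigma$, while each linearly degenerate entropy wave contributes $|\delta|$, so
\begin{displaymath}
Var_{\Sigma}(J)=\sum_{J}\delta_{\alpha_{i}}+\sum_{J}\delta_{\beta_{i}}+\sum_{j}|\delta|.
\end{displaymath}

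Second, Corollary \ref{Corollary-Compact-Set} produces a simply connected compact set $\Omega$ in the $rs$-plane, fixed by the initial variation alone, that contains every state arising in $U_{\theta,\Delta x}$. Let $\overline{\omega}$ be the largest shock strength possible in $\Omega$. From the explicit formula \eqref{Sigma-Change}, $\delta_{\omega}$ is $C^{1}$ with $\delta_{0}=0$, so there is a constant $c>0$ depending only on $\overline{\omega}$ (hence only on the initial variation) such that $\delta_{\omega}\leq c\omega$ for every shock strength $\omega\in[0,\overline{\omega}]$. In particular $\sum_{J}\delta_{\alpha_{i}}\leq c\sum_{J}\alpha_{i}$ and likewise for the $\beta_{i}$, so the shock contributions to $Var_{\Sigma}$ are uniformly controlled by the shock-strength sums already present in $F(J)$.

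Third, I would fix the constant $M_{0}$ by a compatibility calculation so that $L(J)$ dominates a positive multiple of each of $\sum\alpha_{i}$, $\sum\beta_{i}$, $\sum|\delta|$, and $V$. This requires $M_{0}$ small enough that $1-M_{0}c$ remains positive (so that the corrected shock terms $\alpha_{i}-M_{0}\delta_{\alpha_{i}}$ stay bounded below by a positive multiple of $\alpha_{i}$, and similarly for $\beta_{i}$), and simultaneously large enough that the entropy-generation estimate of Proposition \ref{Entropy-Interaction} is absorbed when one later proves the companion non-increase property of $L$ along immediate successor $I$-curves; the latter forces $M_{0}\geq M$. Both constants $M$ and $c$ depend only on $\Omega$, so a valid $M_{0}$ exists provided $Mc<1$, and this can be checked directly from the explicit expression \eqref{Sigma-Change} together with the definition \eqref{Overline-M} of $\overline{M}$. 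Taking $K$ to be the reciprocal of the smallest coefficient that this calibration produces yields $Var(J)\leq K\cdot L(J)$.

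The main obstacle is precisely this joint calibration of $M_{0}$: the two constraints pull in opposite directions, and their compatibility rests on the fact that $M$, $c$, and the slope constant $C_{0}$ of \eqref{Definition-C_0} are all governed by the same compact set $\Omega$, itself fixed once and for all by Proposition \ref{I-Curves-Induct-rs}. Once $M_{0}$ is chosen, the variation bound assembles routinely from the coordinate decomposition above and the Lipschitz bound $\delta_{\omega}\leq c\omega$.
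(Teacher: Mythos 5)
Your decomposition is the right one — you control $Var_{rs}(J)$ by Proposition \ref{Var-rs-J-Bndd-FJ}, control the entropy variation across shocks by the Lipschitz bound $\delta_{\omega}\leq c\,\omega$ valid on the compact set $\Omega$ from Corollary \ref{Corollary-Compact-Set}, and then calibrate $M_{0}$. But you have the direction of the constraint coming from Proposition \ref{Induct-J-I-Curves} reversed, and this reversal produces a spurious incompatibility that you then try to resolve with a condition ($Mc<1$) that is not actually available.

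In Proposition \ref{Induct-J-I-Curves}, one writes $L(J')-L(J)\leq (A+B)+M_{0}\cdot[\mathrm{entropy\ terms}]$ and applies Proposition \ref{Entropy-Interaction}, which bounds the bracketed terms above by $-M(A+B)$. This gives $L(J')-L(J)\leq (1-MM_{0})(A+B)$, and since $A+B\leq 0$, the sign works out precisely when $1-MM_{0}\geq 0$, that is $M_{0}\leq 1/M$ (the paper takes $M_{0}\leq 1/(2M)$). So Proposition \ref{Induct-J-I-Curves} imposes an \emph{upper} bound on $M_{0}$, not the lower bound $M_{0}\geq M$ that you claim. The genuine lower bound on $M_{0}$ instead comes from the present proposition: after establishing $\omega\leq 2(\omega-M_{0}\delta_{\omega})$ (which needs $M_{0}\delta_{\omega}\leq\tfrac{1}{2}\omega$, i.e.\ $M_{0}\leq 1/(2M)$ given $\delta_{\omega}\leq M\omega$), one must absorb the standalone $\sum_{J}|\delta|$ term into $K\cdot M_{0}\sum_{J}|\delta|$, which requires $KM_{0}\geq 1$; with $K=2(4+M)$ this gives $M_{0}\geq 1/(2(4+M))$. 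Since $4+M>M$, the window $\bigl[1/(2(4+M)),\,1/(2M)\bigr]$ is always nonempty, and no further compatibility hypothesis is needed.

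Your version, by contrast, would need $M\leq M_{0}<1/c$, hence $Mc<1$. That product is not controllable: $c$ is essentially $\overline{M}$ and $M=(1-C_{0})^{-1}\overline{M}$, so $Mc=(1-C_{0})^{-1}\overline{M}^{2}$, and $\Omega$ — hence both $\overline{M}$ and $C_{0}$ — grow with the initial variation. For large data or $a=\sqrt{\gamma-1}$ near $0$ (where $C_{0}\to 1$), $Mc$ need not be less than $1$, so the calibration as you describe it would fail. Correcting the direction of the $M_{0}\lessgtr$ constraint from Proposition \ref{Induct-J-I-Curves} removes this obstacle entirely and leaves you with exactly the paper's choice $M_{0}\leq 1/(2M)$, $K=2(4+M)$.
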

    \begin{proof}
      The variation across the $I-$curve $J$ is bounded by
      \begin{eqnarray}
        Var(J) & \leq & Var(\textrm{Shock Waves})+Var(\textrm{Rarefaction
        Waves})\nonumber\\
               &      &+Var(\Sigma \textrm{-Waves})+Var(\Sigma \textrm{
        across Shocks}).\nonumber
      \end{eqnarray}
Since $Var_{rs}\left\{\overline{U}_{0}(\cdot)\right\}\leq
Var\left\{U_{0}(\cdot)\right\}<\infty$, we have from Corollary
\ref{Corollary-Compact-Set} that all the interactions projected into
the $rs-$plane occur in a compact set $\Omega$. Therefore there
exists a constant $\overline{M}>0$ such that for a shock wave of
strength $\omega$, $\delta_{\omega}\leq\overline{M}\omega$.  Let
$M=(1-C_{0})^{-1}\overline{M}$ as in Proposition
\ref{Entropy-Interaction}.  Since, $\overline{M}<M$ we have for a
shock wave of strength $\omega$, $\delta_{\omega}<M\omega$.

From the proof of Proposition \ref{Var-rs-J-Bndd-FJ}, we can bound
the variation from the shock waves and rarefaction waves by the
shock waves crossing $J$ and the initial variation $V$. Thus,
\begin{eqnarray}
  Var(J) & \leq &
  2\left(\sum_{J}\alpha_{i}+\sum_{J}\beta_{i}\right)+\sum_{J}\mu_{i}+\sum_{J}\eta_{i}\phantom{\left(\sum\right)}\nonumber\\
         &      &
         \phantom{44444444}+\sum_{J}|\delta|+\sum_{J}\delta_{\alpha_{i}}+\sum_{J}\delta_{\beta_{i}},\phantom{\sum_{J}|\delta|}\nonumber\\
         & \leq &
         4\left(\sum_{J}\alpha_{i}+\sum_{J}\beta_{i}+V\right)+\sum_{J}|\delta|+M\left(\sum_{J}\alpha_{i}+\sum_{J}\beta_{i}\right),\nonumber\\
         & \leq &
         (4+M)\left(\sum_{J}\alpha_{i}+\sum_{J}\beta_{i}+V\right)+\sum_{J}|\delta|.\nonumber
\end{eqnarray}
Let $M_{0}\leq 1/2M$.  Then,
\begin{displaymath}
  M_{0}\delta_{\omega}\leq
  \frac{1}{2M}\delta_{\omega}\leq\frac{1}{2M}\left(M\omega\right)\leq\frac{1}{2}\omega.
\end{displaymath}
Thus, for a shock wave of strength $\omega$,
\begin{displaymath}
  \omega\leq 2(\omega-M_{0}\delta_{\omega}).
\end{displaymath}
Using this, we find,
\begin{displaymath}
  Var(J)\leq
  2(4+M)\left(\sum_{J}\left(\alpha_{i}-M_{0}\delta_{\beta_{i}}\right)+\sum_{J}\left(\beta_{i}-M_{0}\delta_{\beta_{i}}\right)+V\right)+\sum_{J}|\delta|.
\end{displaymath}
Finally, we put the sum of the strengths of the entropy waves
inside,
\begin{displaymath}
  Var(J)\leq
  2(4+M)\left(\sum_{J}\left(\alpha_{i}-M_{0}\delta_{\beta_{i}}\right)+\sum_{J}\left(\beta_{i}-M_{0}\delta_{\beta_{i}}\right)+M_{0}\sum_{J}|\delta|+V\right).
\end{displaymath}
We can do this because,
\begin{displaymath}
  M_{0}\cdot 2(4+M)\geq 2MM_{0}\geq 1.
\end{displaymath}
Therefore,
\begin{displaymath}
  Var(J)\leq K\cdot L(J),
\end{displaymath}
with $K=2(4+M)$.
\end{proof}

    \begin{proposition}\label{Induct-J-I-Curves}
Suppose that $Var\left\{U_{0}(\cdot)\right\}<\infty$ and $J$, $J'$
are I-curves such that $J\prec J'$ and $L(J)<\infty$.  Then $J'$ is
in the domain of definition of $U_{\theta,\Delta x}(x,t)$,
$L(J')\leq L(J)$ and $U_{\Delta x,\theta}(x,t)$ is defined for
$t\geq 0$.
    \end{proposition}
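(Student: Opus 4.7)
I would mirror the inductive argument of Proposition \ref{I-Curves-Induct-rs}, but with $L(\cdot)$ in place of $F(\cdot)$ and with Proposition \ref{Entropy-Interaction} playing the role of Proposition \ref{Interaction-Estimates}. The first step is to reduce to the case in which $J'$ is an immediate successor of $J$, since any $J\prec J'$ is connected by a finite sequence of such steps and $L(J')\leq L(J)$ along a chain follows from monotonicity at each link. For an immediate successor the difference $L(J')-L(J)$ is localized to the single diamond enclosing one interaction $<U_L,U_M>+<U_M,U_R>\rightarrow <U_L,U_R>$; every wave exterior to the diamond contributes identically to $J$ and $J'$ and cancels.

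Next, by Corollary \ref{Corollary-Compact-Set} every interaction of the approximate solution lies in a single simply connected compact set $\Omega$ of the $rs$-plane determined only by $Var\{U_0(\cdot)\}$. Let $M$ be the constant furnished by Proposition \ref{Entropy-Interaction} applied to this $\Omega$, and fix $M_0=\frac{1}{2M}$; this also respects the bound $M_0\leq \frac{1}{2M}$ used to establish Proposition \ref{Var-Bndd-LJ}. Expanding $L(J')-L(J)$ on the localized diamond yields
\[
L(J')-L(J)=(A+B)-M_0\bigl[(\delta_{\alpha'}-\delta_{\alpha_1}-\delta_{\alpha_2})+(\delta_{\beta'}-\delta_{\beta_1}-\delta_{\beta_2})-(|\delta'|-|\delta_1|-|\delta_2|)\bigr].
\]
Rearranging Proposition \ref{Entropy-Interaction} gives the lower bound $(\delta_{\alpha'}-\delta_{\alpha_1}-\delta_{\alpha_2})+(\delta_{\beta'}-\delta_{\beta_1}-\delta_{\beta_2})-(|\delta'|-|\delta_1|-|\delta_2|)\geq M(A+B)$. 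Substituting, $L(J')-L(J)\leq (1-M_0M)(A+B)=\frac{1}{2}(A+B)$; since Proposition \ref{Interaction-Estimates} forces $A+B\leq 0$ in both of its branches (in case (i), $A+B=-\xi+B\leq -(1-C_0)\xi\leq 0$), this gives $L(J')\leq L(J)$.

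Iterating the immediate-successor estimate, $L$ is non-increasing along any chain $J\prec J'$ and hence remains finite. Combined with Proposition \ref{Var-Bndd-LJ} this yields $Var(J')\leq K\,L(J')\leq K\,L(J)<\infty$, which certifies that the Riemann problems needed to advance the scheme one more time step stay in a bounded region of state space and so can be solved globally by the Riemann existence theorem of the previous chapter. Starting the chain at the initial $I$-curve $\mathbf{0}$, for which $L(\mathbf{0})<\infty$ because $Var\{U_0(\cdot)\}<\infty$, therefore extends $U_{\theta,\Delta x}(x,t)$ to every $t\geq 0$.

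The hard part is the entropy wave, whose creation at an interaction can strictly increase $Var\{\Sigma\}$ and whose magnitude is not controlled by the Riemann-invariant wave strengths in any additive way. All of this is already packaged into Proposition \ref{Entropy-Interaction}, which couples the growth of $|\delta'|$ to the simultaneous decreases of $\delta_\alpha,\delta_\beta$ and of the shock strengths $A+B$. The only delicate ingredient here is the choice of the weight $M_0$: it must be small enough that the residual term $(1-M_0M)(A+B)$ keeps the correct sign, yet large enough that the shock-discount terms $M_0\delta_\alpha,M_0\delta_\beta$ dominate the newly created entropy contribution; $M_0=1/(2M)$ threads that needle.
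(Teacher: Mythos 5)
Your proposal is correct and tracks the paper's own proof step for step: induction on immediate successors, localization of $L(J')-L(J)$ to the interaction diamond, substitution of Proposition \ref{Entropy-Interaction} to cancel the entropy terms against the weight $M_0\leq 1/(2M)$, the sign conclusion $A+B\leq 0$ from Proposition \ref{Interaction-Estimates}, and finally Proposition \ref{Var-Bndd-LJ} together with $L(\mathbf{0})<\infty$ to extend $U_{\theta,\Delta x}$ to all $t\geq 0$. No substantive differences from the argument in the text.
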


    \begin{proof}

Since
$Var_{rs}\left\{\overline{U}_{0}(\cdot)\right\}<Var\left\{U_{0}(\cdot)\right\}<\infty$
there exists a compact set $\Omega$ that contains all possible
interactions.  Define $M$ as in Proposition
\ref{Entropy-Interaction} and take $M_{0}\leq 1/2M$.  As with
Proposition \ref{I-Curves-Induct-rs}, we prove the result by
induction on the $I$ curves.  First let $J'$ be an immediate
successor to $J$. Let $J'_{0}$ and $J_{0}$ be the parts of $J'$ and
$J$ that bound the diamond formed by $J$ and $J'$. Using this and
the definition of $L(J)$,
 \begin{eqnarray}
      L(J')-L(J) & \leq &
      \left[\sum_{J'_{0}}\left(\alpha_{i}-M_{0}\delta_{\alpha_{i}}\right)+\sum_{J'_{0}}\left(\beta_{i}-M_{0}\delta_{\beta_{i}}\right)+M_{0}\sum_{J'_{0}}|\delta|\right]\nonumber\\
                 &      & \phantom{44}-\left[\sum_{J_{0}}\left(\alpha_{i}-M_{0}\delta_{\alpha_{i}}\right)+\sum_{J_{0}}\left(\beta_{i}-M_{0}\delta_{\beta_{i}}\right)+M_{0}\sum_{J_{0}}|\delta|\right],\nonumber\\
                 &=&\left(\alpha'-\alpha_{1}-\alpha_{2}\right)+\left(\beta'-\beta_{1}-\beta_{2}\right)+M_{0}\left(\delta_{\alpha_{1}}+\delta_{\alpha_{2}}-\delta_{\alpha'}\right)\nonumber\\
                 &&+M_{0}\left(\delta_{\beta_{1}}+\delta_{\beta_{2}}-\delta_{\beta'}\right)+M_{0}\left(|\delta'|-|\delta_{1}|-|\delta_{2}|\right).\nonumber
    \end{eqnarray}
    Now we refer to Proposition \ref{Interaction-Estimates} and
    \ref{Entropy-Interaction}.  We see that the first two terms are
    equal to $(A+B)$ and the others are bounded above by $-M(A+B)$.
    Putting this together,
    \begin{displaymath}
      L(J')-L(J)\leq (A+B)-MM_{0}(A+B)\leq\frac{1}{2}(A+B)\leq 0.
    \end{displaymath}
For immediate successors, we have $L(J')\leq L(J)$.  Moreover, by
Proposition \ref{Var-Bndd-LJ} we have that the variation along $J'$
is bounded by $L(J')$ and hence $L(J)$.  Thus, $J'$ is in the domain
of definition of $U_{\Delta x,\theta}$.

For general $J$ and $J'$ such that $J\prec J'$, the same conclusion
holds by constructing a sequence of immediate successors to move
from $J$ to $J'$.  Along each step, the results above continue to
hold.

Finally, if $Var\left\{U_{0}(\cdot)\right\}<\infty$, we have
$L(0)<\infty$ and for any $I-$curve $J$, $L(J)\leq L(0)$.  Which we
can conclude that
\begin{displaymath}
 Var(J)\leq 2(4+M)L(J)\leq 2(4+M)L(0)<\infty,
\end{displaymath}
so our approximate solution can be defined for $t\geq 0$.
    \end{proof}

    \chapter{Existence Theorem for Two Gasses}\label{Existence}
    \thispagestyle{firstpage}
    In this chapter, we use Glimm's Theorem \cite{Glimm} to prove
existence of solutions to \eqref{System-Conservation} in the
ultra-relativistic limit with an equation of state of the form
\eqref{EOS-Family}. It should be noted that for $\theta$ fixed and
$x_{n}=1/2^{n}$, the set of approximate solutions
$\left\{U_{\theta,\Delta x_{n}}(x,t)\right\}^{\infty}_{n=1}$ has
uniformly bounded variation by Proposition \ref{Var-Bndd-LJ}.
Furthermore, since the variation is bounded and each approximate
solution has the same limits at infinity, the sup norm is also
uniformly bounded. The approximate solutions are $\mathbf{L}^{1}$
Lipschitz in time too since
\begin{eqnarray}
  \|U_{\theta,\Delta x}(\cdot,t)-U_{\theta,\Delta x}(\cdot,s)\|_{\mathbf{L}^{1}}&\leq
  &  C \left\{\textrm{Sum of all wave
  strengths}\right\}\cdot\nonumber\\
  & & \phantom{4444} \left\{\textrm{Maximum Speed of
  Wavefronts}\right\}<C'|t-s|.\nonumber
\end{eqnarray}
At this point Helly's Theorem \cite{Bressan-Book} provides a
convergent subsequence, $U_{\theta,\Delta x_{n_{i}}}(x,t)$, that
converges to a function $U(x,t)$ with finite variation for each
fixed time. However at this time, there is no justification that
this limit function is actually a weak solution. Glimm's Theorem
guarantees that there exists a subsequence that converges to a weak
solution.

\bigskip

\section{Existence of Weak Solutions}

        \begin{theorem}[Glimm, $1965$]\label{Glimm}
      Assume that the approximate solution $U_{\theta,\Delta x_{i}}$
      satisfies,
      \begin{equation}
        Var\left\{U_{\theta,\Delta x_{i}}(\cdot,t)\right\}<N<\infty
      \end{equation}
      for $x_{i}=1/2^{i}$, $\theta\in A=\prod_{i=0}^{\infty}$, and all $t\geq 0$.  Then there exits a subsequence of mesh
      lengths $\Delta x_{i_{k}}$ such that $U_{\theta,\Delta
      x_{i_{k}}}\rightarrow U$ in $\mathbf{L}^{1}_{Loc}$ where $U(x,t)$ satisfies,
      \begin{displaymath}
        Var\left\{U(\cdot,t)\right\}<N.
      \end{displaymath}
      Furthermore, there exits a set of measure zero
      $\overline{A}\subset A$ such that if $\theta\in
      A-\overline{A}$ then $U(x,t)$ is a weak solution to
      \eqref{System-Conservation}.
    \end{theorem}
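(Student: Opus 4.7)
The plan is to carry out the classical two-stage Glimm argument: first extract a limit by compactness, then show that for almost every sampling sequence $\theta$ this limit satisfies the weak form of the conservation law.

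For the compactness step, I would combine the uniform variation bound $Var\{U_{\theta,\Delta x_i}(\cdot,t)\}<N$ with the fact that all the approximations share the same limits at $\pm\infty$ (inherited from $U_0$) to obtain a uniform sup-norm bound. The $\mathbf{L}^1$-Lipschitz estimate in $t$ noted just before the statement then lets me apply Helly's selection theorem at a countable dense set of times and extract, by a standard diagonal argument, a subsequence $\Delta x_{i_k}\to 0$ along which $U_{\theta,\Delta x_{i_k}}\to U$ in $\mathbf{L}^1_{\mathrm{loc}}$ for every $t\ge 0$. Lower semicontinuity of total variation in $\mathbf{L}^1_{\mathrm{loc}}$ then yields $Var\{U(\cdot,t)\}\le N$.

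The main work is showing that $U$ is a weak solution for $\theta$ outside a null set $\overline{A}$. Fix $\varphi\in\mathcal{C}^1_0$ and consider the residual
\[ \mathcal{I}(U_{\theta,\Delta x},\varphi)=\int_0^\infty\!\!\int_{-\infty}^\infty\bigl(U_{\theta,\Delta x}\varphi_t+F(U_{\theta,\Delta x})\varphi_x\bigr)dx\,dt+\int U_{\theta,\Delta x}(x,0)\varphi(x,0)\,dx. \]
Within each time strip $n\Delta t\le t<(n+1)\Delta t$ the approximation is an \emph{exact} weak solution, since the CFL condition prevents neighboring Riemann fans from interacting. Applying Green's theorem strip-by-strip, the only surviving contributions are the jumps introduced at each discrete time level $t=n\Delta t$ by the random sampling projection, so the residual collapses to
\[ \mathcal{I}(U_{\theta,\Delta x},\varphi)=\sum_{n\ge 0}\int_{-\infty}^\infty\bigl[U_{\theta,\Delta x}(x,n\Delta t-)-U_{\theta,\Delta x}(x,n\Delta t+)\bigr]\varphi(x,n\Delta t)\,dx. \]

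To dispatch this residual I would treat $\theta=(\theta_0,\theta_1,\ldots)$ as a sequence of i.i.d.\ uniform random variables on $[-1,1]$ and estimate $\mathcal{I}$ in $\mathbf{L}^2(A)$. Because $\theta_n$ and $\theta_m$ are independent for $n\ne m$, the cross-terms in $\mathbb{E}[|\mathcal{I}|^2]$ vanish and the diagonal terms are controlled per time level by a constant multiple of $\|\varphi\|_\infty^2\cdot(Var\{U_{\theta,\Delta x}(\cdot,n\Delta t)\})^2\cdot(\Delta x)^2$; summing over the $O(1/\Delta x)$ time levels meeting $\mathrm{supp}(\varphi)$ and using the uniform BV bound $N$ gives $\mathbb{E}[|\mathcal{I}|^2]=O(\Delta x)$. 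Chebyshev's inequality combined with Borel--Cantelli then delivers a full-measure set on which $\mathcal{I}(U_{\theta,\Delta x_{i_k}},\varphi)\to 0$ along a further subsequence; intersecting over a countable dense family of test functions produces the exceptional null set $\overline{A}$, and passing to the limit using the $\mathbf{L}^1_{\mathrm{loc}}$ convergence from the first step shows $\mathcal{I}(U,\varphi)=0$ for all $\varphi\in\mathcal{C}^1_0$. The main obstacle is precisely this cancellation step: the sampling errors at individual time levels are of size $O(1)$ in $\theta$, and it is only their equidistribution-driven cancellation across many time levels --- the heart of Glimm's 1965 argument --- that brings the residual to zero.
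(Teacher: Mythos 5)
The paper does not prove this statement: it is stated as an attributed theorem and cited directly to Glimm's 1965 paper, and the text immediately proceeds to verify the hypotheses for the system at hand rather than re-deriving the theorem. So there is no proof in the paper to compare against; what you have written is a sketch of Glimm's original argument, which is the right object to reach for.

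Your sketch is the standard one and captures the two genuine stages --- Helly-plus-diagonal compactness, then a probabilistic averaging argument over the sampling sequence to kill the projection residual --- but there is one real inaccuracy in the averaging step. You assert that because $\theta_n$ and $\theta_m$ are independent for $n\neq m$, the cross-terms in $\mathbb{E}[|\mathcal{I}|^2]$ vanish. They do not: independence only forces $\mathbb{E}[J_nJ_m]=\mathbb{E}\bigl[\mathbb{E}[J_n\,|\,\theta_0,\dots,\theta_{n-1}]\,J_m\bigr]$ for $m<n$, and the conditional expectation $\mathbb{E}[J_n\,|\,\mathcal{F}_{n-1}]$ is not zero. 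Averaging the sampled value over $\theta_n\in[-1,1]$ replaces $U((k+1+\theta_n)\Delta x,\,n\Delta t-)$ by the \emph{cell average} of $U(\cdot,n\Delta t-)$ over $(k\Delta x,(k+2)\Delta x)$, so what survives is $\sum_k\int_{I_k}\bigl(U-\bar U_k\bigr)\varphi\,dx$, which need not vanish. The actual smallness comes from an extra subtraction: since $\int_{I_k}(U-\bar U_k)\,dx=0$, one may replace $\varphi$ by $\varphi-\bar\varphi_k$ and invoke the $\mathcal{C}^1$ smoothness of $\varphi$ to gain a factor of $\Delta x$; this puts the conditional mean at $O((\Delta x)^2\,Var\{U\}\,\|\varphi'\|_\infty)$, hence each cross-term at $O((\Delta x)^3)$, and with $O((\Delta x)^{-2})$ cross-terms the total cross contribution is $O(\Delta x)$ --- comparable to, not smaller than, the diagonal. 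This is the real content of the equidistribution step that you flag as "the heart of Glimm's argument," and it should not be described as cross-terms vanishing by independence alone. With that correction your outline matches the classical proof.

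Two smaller points worth making precise if this were to be written out: the "$\Delta x_{i_k}\to 0$ then $\theta$" order of quantifiers has to be handled with care (one first fixes $\theta$ in the full-measure set, then passes to a subsequence of mesh lengths, and the set $\overline{A}$ is built as a countable union over a countable family of test functions and dyadic meshes); and the claim that the approximate solution is an exact weak solution inside each time strip uses both the exactness of the Riemann-problem solutions and the CFL condition $\Delta x/\Delta t>\max|\lambda_i|$, which the paper does verify for this system.
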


We now prove Theorem \ref{Main-Theorem} by showing that our
approximate solutions meet the assumptions of Glimm's Theorem.

    \begin{proof}
Assume the initial data satisfies, \eqref{Initial-Bound-Entropy},
\eqref{Initial-Bound-Density}, and \eqref{Initial-Bound-Velocity} We
will show that for all $\Delta x_{i}$ and sample points $\theta$,
    \begin{equation}\label{Bounded-Conserved-Variables}
      Var\left\{U_{\Delta x,\theta}(\cdot,t)\right\}<N<\infty,
    \end{equation}
where $U_{\theta,\Delta
x}(\rho(x,t),v(x,t),S(x,t))=(U_{1},U_{2},U_{3})_{\theta,\Delta x}$.
First we show that the variation in $\rho$, $v$, and $S$ is bounded
for all time in the approximate solutions.

    From Proposition \ref{Var-rs-J-Bndd-FJ} and Proposition
    \ref{I-Curves-Induct-rs} we have that the variation
of our approximate solution in $r$ and $s$ is uniformly bounded for
all time.  More specifically,

    \begin{eqnarray}
      Var_{rs}\left(\overline{U}_{\theta,\Delta x}(\cdot,t)\right) & < &
      4F(\mathbf{0}),\nonumber\\
      & < &
      4\left[\sum_{\mathbf{0}}\alpha_{i}+\sum_{\mathbf{0}}\beta_{i}+Var_{rs}(U_{0})\right],\nonumber\\
      & < & 8\cdot Var_{rs}(U_{0}(\cdot)).\nonumber
    \end{eqnarray}

    From this estimate, we show that the variation of
    \begin{displaymath}
      \ln\left(\rho\right) \phantom{4444} \textrm{and}
      \phantom{4444} \ln\left(\frac{1+v}{1-v}\right)
    \end{displaymath}
    are also bounded for all time.  Indeed by the definition of $r$
    and $s$,
    \begin{displaymath}
      \ln\left(\frac{1+v}{1-v}\right)=\frac{1}{2}(r+s),
    \end{displaymath}
    we have
    \begin{eqnarray}
      Var\left\{\ln\left(\frac{1+v(\cdot,t)}{1-v(\cdot,t)}\right)\right\} &
      = &
      \frac{1}{2}\sup_{N}\sum_{i=1}^{N}|(r(x_{i+1},t)+s(x_{i+1},t))-(r(x_{i},t)+s(x_{i},t))|,\nonumber\\
      & \leq &
      \frac{1}{2}\sup_{N}\sum_{i=1}^{N}|r(x_{i+1},t)-r(x_{i},t)|\nonumber\\
      & & \phantom{4444444} +\frac{1}{2}\sup_{N}\sum_{i=1}^{N}|s(x_{i+1},t)-s(x_{i},t)|,\nonumber\\
      & \leq & \frac{1}{2}Var_{rs}\left\{\overline{U}_{\Delta
      x,\theta}(\cdot,t)\right\}+\frac{1}{2}Var_{rs}\left\{\overline{U}_{\Delta
      x,\theta}(\cdot,t)\right\},\nonumber\\
      & \leq & 8\cdot Var\left\{U_{0}(\cdot)\right\}.\nonumber
    \end{eqnarray}
    Similarly, using
    \begin{displaymath}
      \ln(\rho)=\frac{1+a^{2}}{a}(s-r),
    \end{displaymath}
    we find,
    \begin{displaymath}
      Var\left\{\ln(\rho(\cdot,t))\right\}\leq
      16\left(\frac{1+a^{2}}{a}\right)Var\left\{U_{0}(\cdot)\right\}.
    \end{displaymath}

    Now, we show the variation in $\Sigma$ is bounded for all time
    in approximate solutions.  This is clear from Proposition
    \ref{Var-Bndd-LJ} and Proposition \ref{Induct-J-I-Curves}
    because there exists a constant $M$ so that
    \begin{displaymath}
      Var\left\{\Sigma_{\theta,\Delta x}(\cdot,t)\right\}\leq 2(4+M)L(\mathbf{0}).
    \end{displaymath}

    We can now show that the variation in $\rho$, $v$ and $S$ is
    bounded for all time.  Since
    $Var\left\{\ln(\rho(\cdot,t))\right\}<\infty$ for all $t>0$
    there exists a constant $b>0$ such that $\rho(x,t)<b$.  Let
    $c=max\left\{1,b\right\}$, then
    \begin{eqnarray}
      Var\left\{\rho(\cdot,t)\right\} & = &
      \sup_{N}\sum_{i=1}^{N}|\rho(x_{i+1},t)-\rho(x_{i},t)|,\nonumber\\
      & \leq & c\cdot\sup_{N}\sum_{i=1}^{N}|\ln(\rho(x_{i+1},t))-\ln(\rho(x_{i},t))|,\nonumber\\
      & \leq & c\cdot Var\left\{\ln(\rho(\cdot,t))\right\}.\nonumber
    \end{eqnarray}

    For $v$ we have,
    \begin{eqnarray}
      Var\left\{v(\cdot,t)\right\} & = &
      \sup_{N}\sum_{i=1}^{N}\left|v(x_{i+1},t)-v(x_{i},t)\right|,\nonumber\\
      & \leq & \frac{1}{2}\sup_{N}\sum_{i=1}^{N}\left|\ln\left(\frac{1+v(x_{i+1},t)}{1-v(x_{i+1},t)}\right)-\ln\left(\frac{1+v(x_{i},t)}{1-v(x_{i},t)}\right)\right|,\nonumber\\
      & \leq & \frac{1}{2}Var\left\{\ln\left(\frac{1+v(\cdot,t)}{1-v(\cdot,t)}\right)\right\}.\nonumber
    \end{eqnarray}
    The factor $1/2$ comes from the fact that the slope of the chord
    connecting the points,
\begin{displaymath}
  \left(v(x_{i+1},t),\ln\left(\frac{1+v(x_{i+1},t)}{1-v(x_{i+1},t)}\right)\right)
  \phantom{333} \textrm{and} \phantom{333}
  \left(v(x_{i},t),\ln\left(\frac{1+v(x_{i},t)}{1-v(x_{i},t)}\right)\right)
\end{displaymath}
is bounded below by $2$.

    For $S$ we need to find a constant $C$ such that
\begin{displaymath}
    \left|S(x,t)-S(y,t)\right|\leq C\left|\Sigma(x,t)-\Sigma(y,t)\right|.
\end{displaymath} Since $\Sigma$ is of finite variation for all
time, there exists a largest and smallest value of $S$, say
$S_{max}$ and $S_{min}$ with $0<S_{min}\leq S_{max}$. Define $C$ by
\begin{displaymath}
  C=max_{S\in\left[S_{min},S_{max}\right]}\left(\frac{d\Sigma}{d
  S}\right)^{-1}=max_{S\in\left[S_{min},S_{max}\right]}\frac{A(S)}{A'(S)}.
\end{displaymath}
    It follows that,
    \begin{displaymath}
      Var\left\{S(\cdot,t)\right\}\leq C\cdot Var\left\{\Sigma(\cdot,t)\right\}.
    \end{displaymath}

    Finally, from Proposition \ref{Change-Of-Variables} the
    determinant of the Jacobian is bounded away from zero for all
    approximate solutions.  Therefore, the variation in conserved variables, $(U_{1},U_{2},U_{3})$,
    are bounded for all $t \geq 0$, $\theta$ and $\Delta x_{i}$.

    Therefore, Theorem \ref{Glimm} provides existence of a set measure zero
    $\overline{A}\subset A$ such that if we choose $\theta\in
    A-\overline{A}$ there exists a subsequence of mesh refinements,
    $\Delta x_{i_{k}}\to 0$ such that $U_{\theta,\Delta_{x_{i_{k}}}}$ converges pointwise almost everywhere in
    $L^{1}_{loc}$ to a weak solution, $U(x,t)$ of
    \eqref{System-Conservation}.  Moreover, this solution satisfies
    \begin{displaymath}
      Var\left\{\ln(\rho(\cdot,t))\right\}<N,
    \end{displaymath}
    \begin{displaymath}
      Var\left\{\ln\left(\frac{1+v(\cdot,t)}{1-v(\cdot,t)}\right)\right\}<N,
    \end{displaymath}
    and
    \begin{displaymath}
      Var\left\{S(\cdot,t)\right\}<N,
    \end{displaymath}
    for some $N>0$, all $t>0$ and is $L^{1}$ Lipschitz in time.

    \end{proof}

    \chapter{Interaction
    Estimates}\label{Interaction-Estimates-Chapter}
    \thispagestyle{firstpage}
    We give a systematic approach to the wave interaction estimates
needed to prove Proposition \ref{Interaction-Estimates}.  From the
special geometry of the shock-rarefaction curves in the space of
Riemann invariants we can analyze the interactions as done for the
classical Euler equations in \cite{Nishida-Smoller} and
\cite{Temple-Large}. There are sixteen possible incoming wave
profiles, and among these one to four different outgoing wave
profiles, each of which will be covered on a case by case basis.  We
assume that all the interactions occur in a simply connected compact
set $\Omega\subset\mathbb{R}^{2}$. Recall that for a compact set
$\Omega$ we have defined the constant $C_{0}$,
\eqref{Definition-C_0}, as the max of $1/2$ or the largest slope
possible of a shock curve contained in $\Omega$.  Since the shock
wave slopes are strictly bounded above by $1$, we have, $0<1/2\leq
C_{0}<1$.

During these estimates we repeatedly use the fact that the shock
curves in the space of Riemann invariants are translationally
invariant, convex and whose derivatives are bounded above by a
constant.  We reference Lemma \ref{Lemma-Smoller-Temple} for these
results.  Since our definition of wave strength is determined by the
change in $r$ for $1-$waves and $s$ for $3-$waves, we use the
following two facts:
\begin{enumerate}

\item The change in $s$ along a $1-$shock is uniformly bounded by
the change in $r$ and vice versa for $3-$shocks.  Indeed, since we
have
\begin{displaymath}
  \frac{ds}{dr}\leq \frac{\sqrt{2K}-1}{-\sqrt{2K}-1}
\end{displaymath}
for $1-$shocks and
\begin{displaymath}
  \frac{dr}{ds}\leq \frac{\sqrt{2K}-1}{-\sqrt{2K}-1}
\end{displaymath}
for $3-$shocks, we have for our constant $C_{0}$,
\begin{displaymath}
  \frac{y_{1}}{z_{1}}<C_{0} \phantom{333} \textrm{and} \phantom{333}
  \frac{y_{3}}{z_{3}}<C_{0}.
\end{displaymath}
See Figure \ref{Shock-Change}.

\begin{figure}
\begin{center}
  \includegraphics[width=4in]{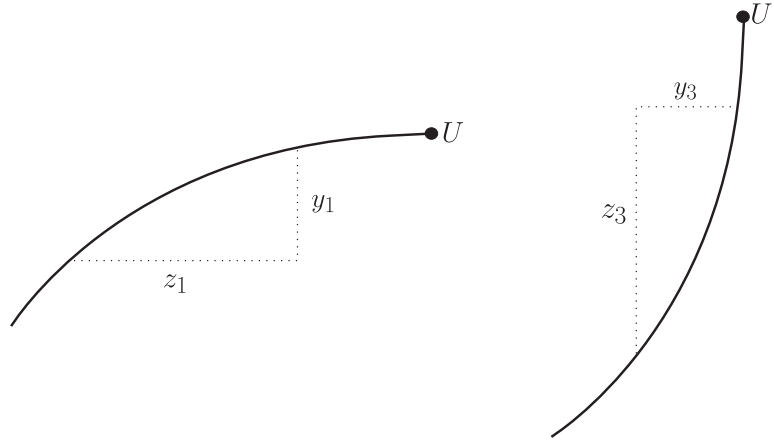}
  \caption{Shock Curve Slopes are bounded by
  $C_{0}$.}\label{Shock-Change}
\end{center}
\end{figure}

\item  Suppose two shock curves of the same family that begin at two
distinct states $U_{1}$ and $U_{2}$ and meet at a common third state
$U_{3}$.  Then the ratio of the distances along the $r$ and $s$ axes
from $U_{1}$ and $U_{2}$ are bounded by $C_{0}$.  Again, we have,
\begin{displaymath}
  \frac{y_{1}}{z_{1}}<C_{0} \phantom{333} \textrm{and} \phantom{333}
  \frac{y_{3}}{z_{3}}<C_{0}.
\end{displaymath}
See Figure \ref{Shock-Meet}.

\begin{figure}
\begin{center}
  \includegraphics[width=4in]{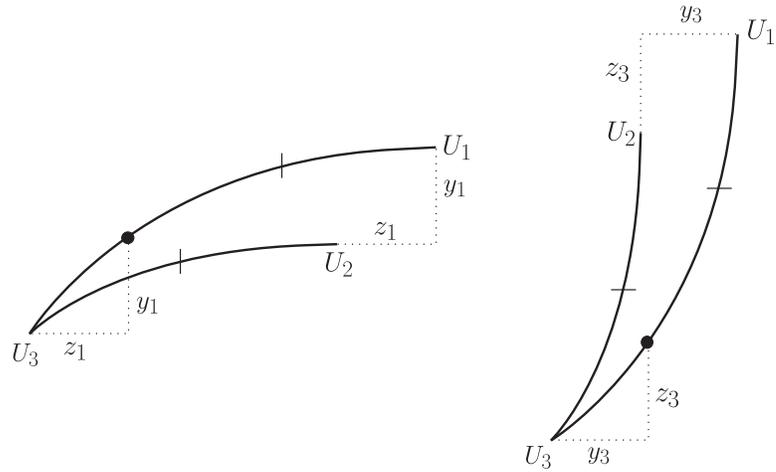}
  \caption{Shock curves intersecting at $U_{3}$ satisfy, $y/z<C_{0}$.}\label{Shock-Meet}
\end{center}
\end{figure}

\end{enumerate}


We now begin our interaction analysis.

\begin{enumerate}

\item
$(\alpha_{1},\beta_{1})+(\alpha_{2},\beta_{2})$
\begin{itemize}
  \item If $A\leq 0$ and $B\leq 0$ we are done.

\begin{figure}
\begin{center}
  \includegraphics[width=150pt]{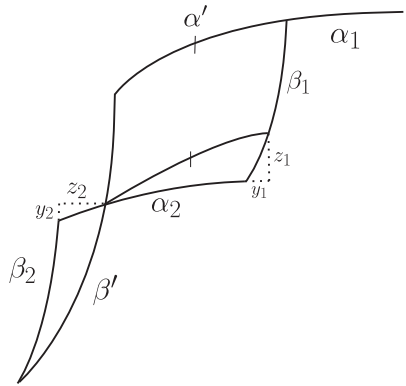}
  \caption{$(\alpha_{1},\beta_{1})+(\alpha_{2},\beta_{2})\rightarrow(\alpha',\beta')$, $A\leq 0$, $B\geq 0$}\label{SSSS-SS-1}
\end{center}

\end{figure}

\item
$(\alpha_{1},\beta_{1})+(\alpha_{2},\beta_{2})\rightarrow(\alpha',\beta')$: Figure \ref{SSSS-SS-1}\\
$\phantom{33333}$ Suppose that $A=-\xi\leq 0$ and $B\geq 0$.  We
have $A=y_{1}-z_{2}$ and $B=y_{2}-z_{1}$. From $A\leq 0$,
$y_{2}>z_{1}$, and hence,
\begin{displaymath}
  y_{1}<z_{1}<y_{2}<z_{2}.
\end{displaymath}
Therefore,
\begin{eqnarray}
  A+B & = & (y_{1}-z_{2})+(y_{2}-z_{1}),\nonumber\\
      & = & (y_{1}+y_{2})-(z_{1}+z_{2}),\nonumber\\
      & \leq & (C_{0}-1)(z_{1}+z_{2}),\nonumber\\
      & \leq & (C_{0}-1)(z_{2}-y_{1}),\nonumber\\
      & \leq & C_{0}\xi+A.\nonumber
\end{eqnarray}
Hence, $B\leq C_{0}\xi$.  Note, the inequalities hold since
$(C_{0}-1)<0$.


\begin{figure}
\begin{center}
  \includegraphics[width=150pt]{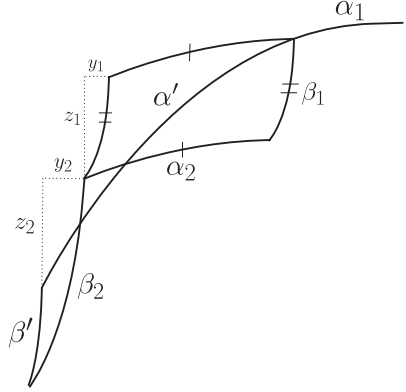}
  \caption{$(\alpha_{1},\beta_{1})+(\alpha_{2},\beta_{2})\rightarrow(\alpha',\beta')$, $A\geq 0$, $B\leq 0$}\label{SSSS-SS-2}
\end{center}
\end{figure}

\item $(\alpha_{1},\beta_{1})+(\alpha_{2},\beta_{2})\rightarrow(\alpha',\beta')$: Figure \ref{SSSS-SS-2}\\
$\phantom{33333}$ Suppose that $A\geq 0$ and $B=-\xi\leq 0$. We have
$\alpha'=\alpha_{1}+y_{1}+\alpha_{2}+y_{2}$ and hence,
$A=\alpha'-\alpha_{1}-\alpha_{2}=y_{1}+y_{2}$.  Furthermore,
$\beta'+z_{2}+z_{1}=\beta_{1}+\beta_{2}$, which gives,
$B=\beta'-\beta_{1}-\beta_{2}=-z_{1}-z_{2}=-\xi$.  Thus, $A\leq
C_{0}\xi$.


\begin{figure}
\begin{center}
  \includegraphics[height=150pt]{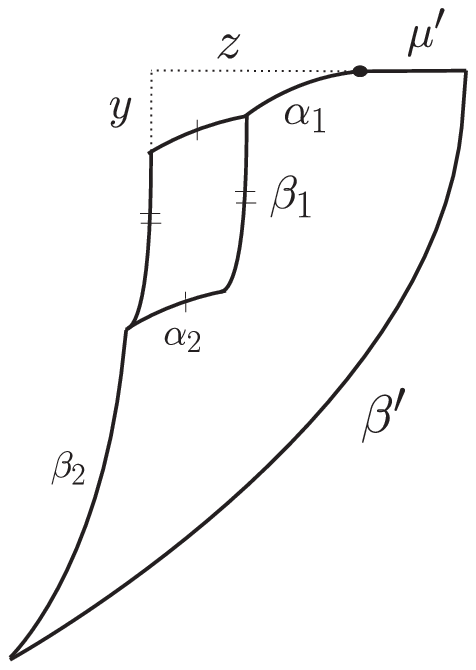}
  \caption{$(\alpha_{1},\beta_{1})+(\alpha_{2},\beta_{2})\rightarrow(\mu',\beta')$}\label{SSSS-RS}
\end{center}
\end{figure}

\item $(\alpha_{1},\beta_{1})+(\alpha_{2},\beta_{2})\rightarrow(\mu',\beta')$: Figure \ref{SSSS-RS}\\
$A=-z$, $B=y\leq C_{0}z$.


\begin{figure}
\begin{center}
  \includegraphics[height=150pt]{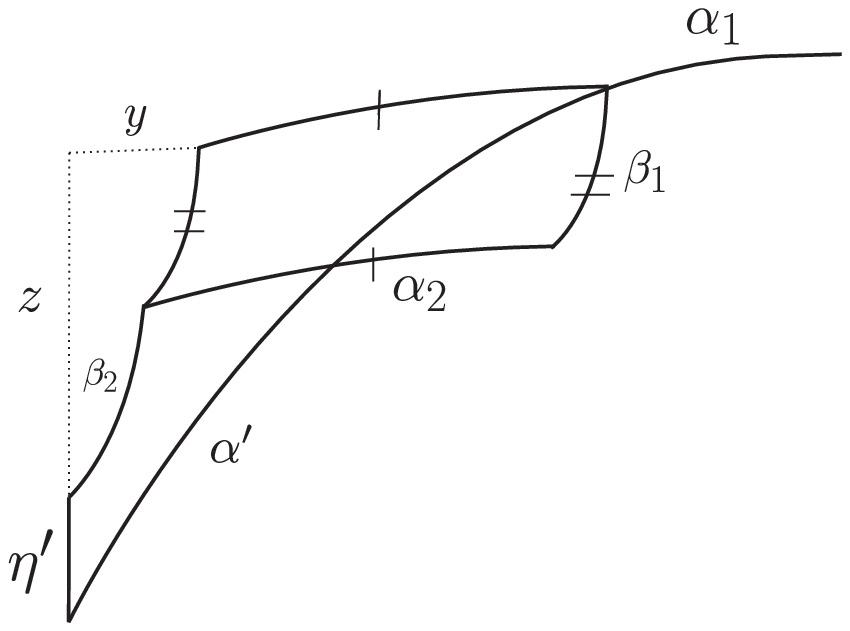}
  \caption{$(\alpha_{1},\beta_{1})+(\alpha_{2},\beta_{2})\rightarrow(\alpha',\eta')$}\label{SSSS-SR}
\end{center}
\end{figure}

\item $(\alpha_{1},\beta_{1})+(\alpha_{2},\beta_{2})\rightarrow(\alpha',\eta')$: Figure \ref{SSSS-SR}\\
$B=-\beta_{1}-\beta_{2})=-z$, and
$A=\alpha'-\alpha_{1}-\alpha_{2}=y\leq C_{0}z$.

\end{itemize}


\item $(\alpha_{1},\beta_{1})+(\alpha_{2},\eta_{2})$

\begin{figure}
\begin{center}
  \includegraphics[height=130pt]{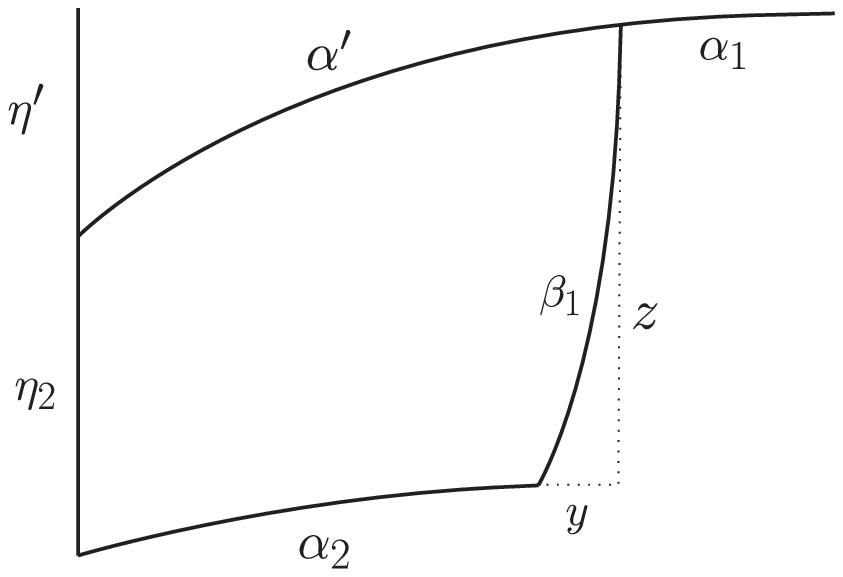}
  \caption{$(\alpha_{1},\beta_{1})+(\alpha_{2},\eta_{2})\rightarrow(\alpha',\eta')$}\label{SSSR-SR}
\end{center}
\end{figure}

\begin{itemize}
\item $(\alpha_{1},\beta_{1})+(\alpha_{2},\eta_{2})\rightarrow(\alpha',\eta')$: Figure \ref{SSSR-SR}\\
$\phantom{333333}$ $B=-z$, $A=\alpha'-\alpha_{1}-\alpha_{2}=y\leq
C_{0}z$.

\begin{figure}
\begin{center}
  \includegraphics[height=150pt]{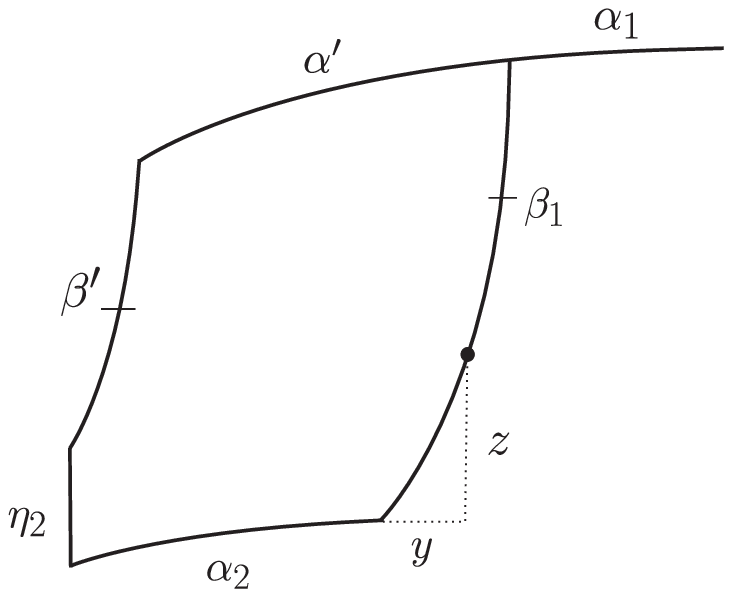}
  \caption{$(\alpha_{1},\beta_{1})+(\alpha_{2},\eta_{2})\rightarrow(\alpha',\beta')$}\label{SSSR-SS}
\end{center}
\end{figure}

\item $(\alpha_{1},\beta_{1})+(\alpha_{2},\eta_{2})\rightarrow(\alpha',\beta')$: Figure \ref{SSSR-SS}\\
$\phantom{333333}$ $B=\beta'-\beta_{1}-\beta_{2}=-z$.
 Since, $\alpha_{1}+\alpha'=\alpha_{2}+y+\alpha_{1}\Rightarrow
\alpha'-\alpha_{2}=y$  then, $A=\alpha'-\alpha_{1}-\alpha_{2}\leq y
\leq C_{0}z$.

\end{itemize}


\begin{figure}
\begin{center}
  \includegraphics[height=150pt]{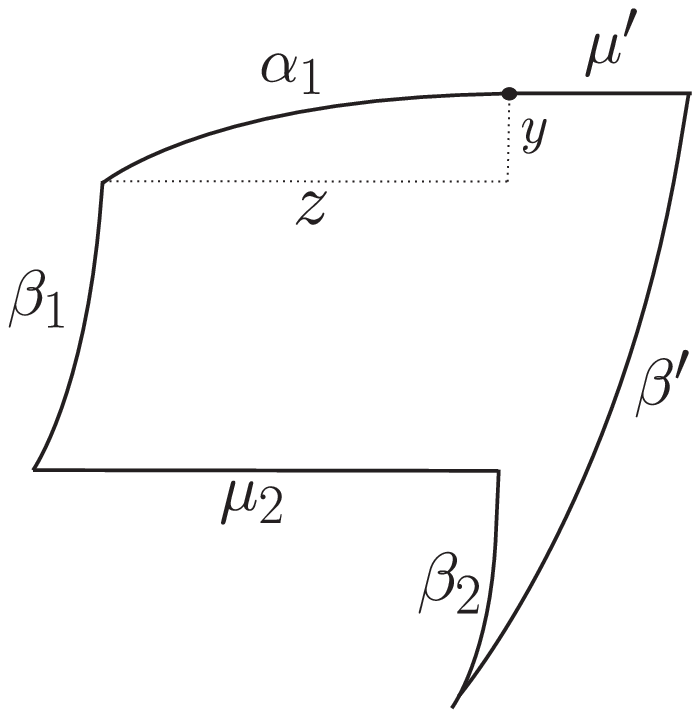}
  \caption{$(\alpha_{1},\beta_{1})+(\mu_{2},\beta_{2})\rightarrow(\mu',\beta')$}\label{SSRS-RS}
\end{center}
\end{figure}

\item $(\alpha_{1},\beta_{1})+(\mu_{2},\beta_{2})$
\begin{itemize}
\item $(\alpha_{1},\beta_{1})+(\mu_{2},\beta_{2})\rightarrow(\mu',\beta')$: Figure \ref{SSRS-RS}\\
$\phantom{333333}$ $A=-\alpha_{1}=-z$,
$B=\beta'-\beta_{1}-\beta_{2}=y\leq C_{0}z$.

\begin{figure}
\begin{center}
  \includegraphics[height=150pt]{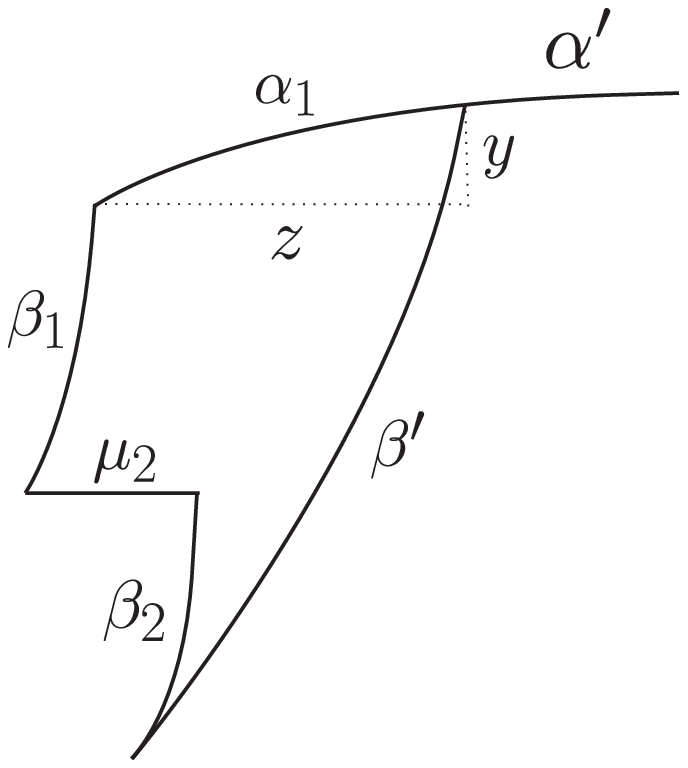}
  \caption{$(\alpha_{1},\beta_{1})+(\mu_{2},\beta_{2})\rightarrow(\alpha',\beta')$}\label{SSRS-SS}
\end{center}
\end{figure}

\item $(\alpha_{1},\beta_{1})+(\mu_{2},\beta_{2})\rightarrow(\alpha',\beta')$: Figure \ref{SSRS-SS}\\
$\phantom{333333}$ $A=\alpha'-\alpha_{1}=-z$,
$B=\beta'-\beta_{1}-\beta_{2}=y\leq C_{0}z$.

\end{itemize}


\item $(\alpha_{1},\beta_{1})+(\mu_{2},\eta_{2})$

\begin{itemize}

\item $(\alpha_{1},\beta_{1})+(\mu_{2},\eta_{2})\rightarrow(\mu',\eta')$\\
$\phantom{333333}$ $A\leq0$, $B\leq 0$.

\begin{figure}
\begin{center}
  \includegraphics[height=150pt]{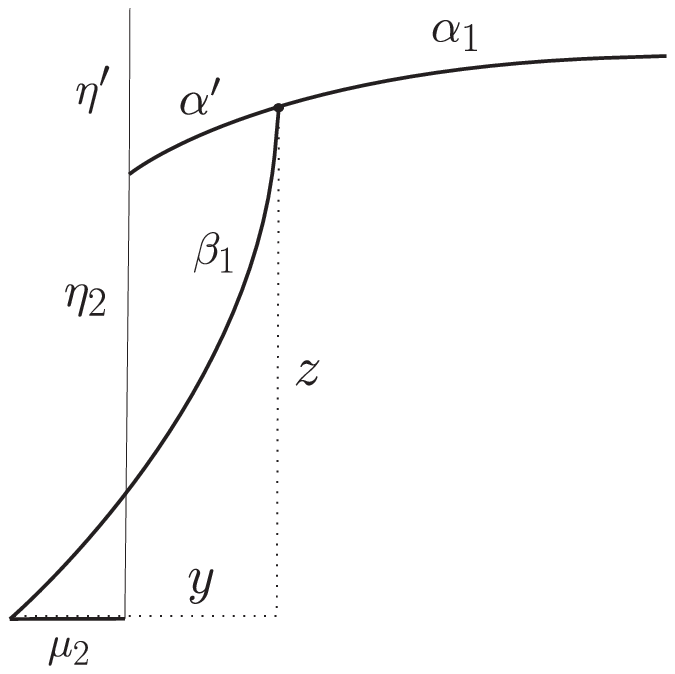}
  \caption{$(\alpha_{1},\beta_{1})+(\mu_{2},\eta_{2})\rightarrow(\alpha',\eta')$}\label{SSRR-SR}
\end{center}
\end{figure}

\item $(\alpha_{1},\beta_{1})+(\mu_{2},\eta_{2})\rightarrow(\alpha',\eta')$: Figure \ref{SSRR-SR}\\
$\phantom{333333}$ $A,B\leq 0$ or, $B=-\beta_{1}=-z$ and
$A=\alpha'-\alpha_{1}\leq y\leq C_{0}z$.

\begin{figure}
\begin{center}
  \includegraphics[width=150pt]{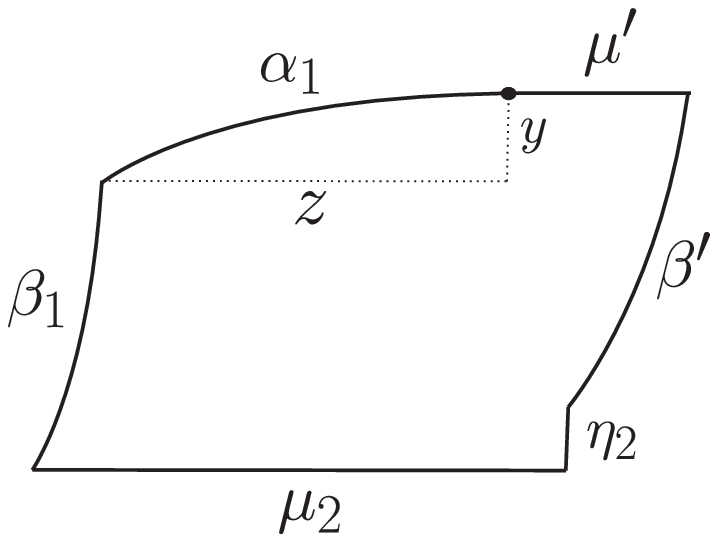}
  \caption{$(\alpha_{1},\beta_{1})+(\mu_{2},\eta_{2})\rightarrow(\mu',\beta')$}\label{SSRR-RS}
\end{center}
\end{figure}

\item $(\alpha_{1},\beta_{1})+(\mu_{2},\eta_{2})\rightarrow(\mu',\beta')$: Figure \ref{SSRR-RS}\\
$\phantom{333333}$ $A,B\leq 0$ or; $A=-z$ and
$y+\beta_{1}=\beta'+\eta_{2}$ $\Rightarrow$
$\beta'-\beta_{1}=y-\eta_{2}$ $\Rightarrow$ $B=\beta'-\beta_{1}\leq
y\leq C_{0}z$.

\begin{figure}
\begin{center}
  \includegraphics[height=150pt]{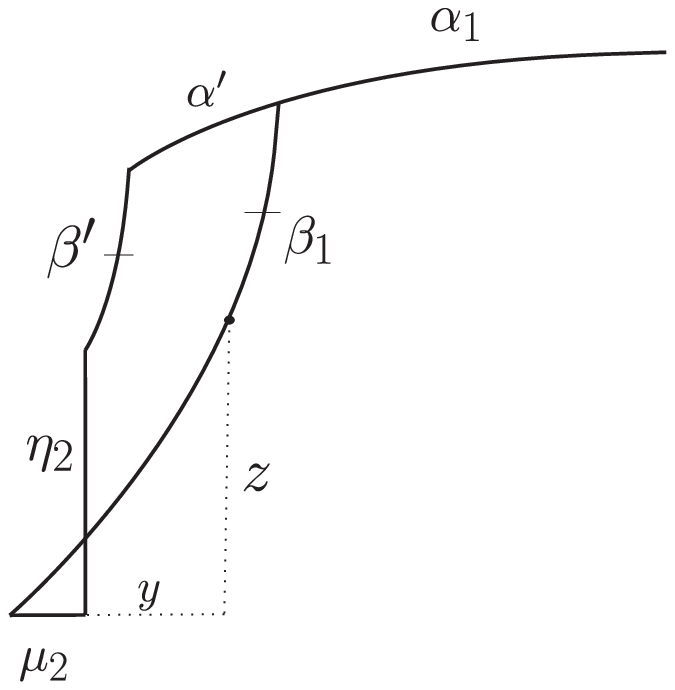}
  \includegraphics[height=150pt]{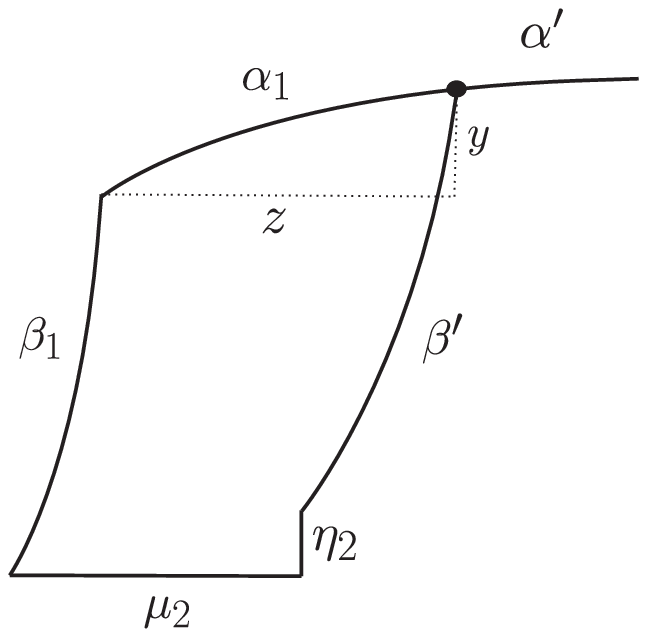}
  \caption{$(\alpha_{1},\beta_{1})+(\mu_{2},\eta_{2})\rightarrow(\alpha',\beta')$, $0\leq A$ and $0\leq B$.}\label{SSRR-SS}
\end{center}
\end{figure}

\item $(\alpha_{1},\beta_{1})+(\mu_{2},\eta_{2})\rightarrow(\alpha',\beta')$: Figure \ref{SSRR-SS}\\
$\phantom{333333}$ $B=\beta'-\beta_{1}=-z$ and
$\alpha_{1}+y=\mu_{2}+\alpha'$ $\Rightarrow$
$\alpha'-\alpha_{1}=y-\mu_{2}$ $\Rightarrow$
$A=\alpha'-\alpha_{1}\leq y \leq C_{0}z$.


\item $(\alpha_{1},\beta_{1})+(\mu_{2},\eta_{2})\rightarrow(\alpha',\beta')$: Figure \ref{SSRR-SS}\\
$\phantom{333333}$ $A=\alpha'-\alpha_{1}=-z$ and
$y+\beta_{1}=\beta'+\eta_{2}$ $\Rightarrow$
$\beta'-\beta_{1}=y-\eta_{2}$ $\Rightarrow$ $B=\beta'-\beta_{1}\leq
y \leq C_{0}z$.

\end{itemize}


\item $(\alpha_{1},\eta_{1})+(\alpha_{2},\beta_{2})$

\begin{figure}
\begin{center}
  \includegraphics[height=150pt]{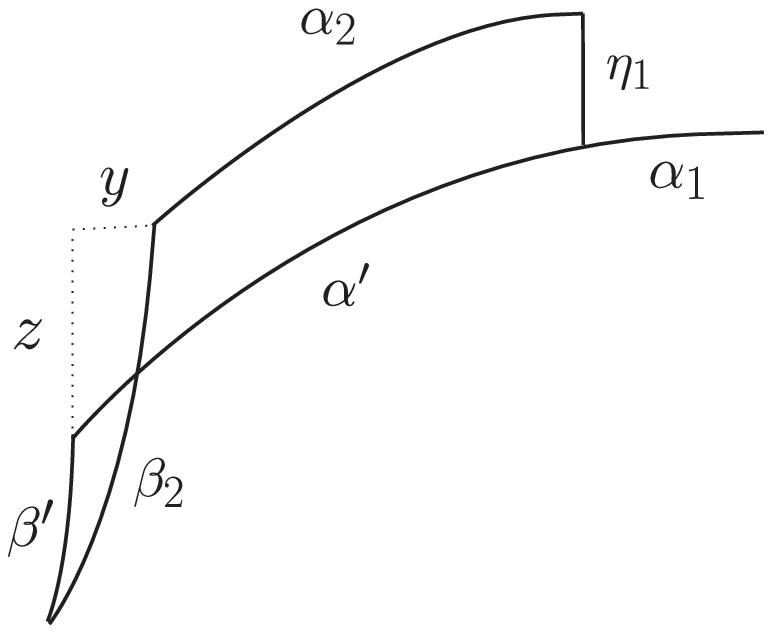}
  \caption{$(\alpha_{1},\eta_{1})+(\alpha_{2},\beta_{2})\rightarrow(\alpha',\beta')$}\label{SRSS-SS}
\end{center}
\end{figure}

\begin{itemize}

\item $(\alpha_{1},\eta_{1})+(\alpha_{2},\beta_{2})\rightarrow(\alpha',\beta')$: Figure \ref{SRSS-SS}\\
$\phantom{333333}$ $A=\alpha'-\alpha_{1}-\alpha_{2}=y$ and
$B=\beta'-\beta_{2}=-z$ $\Rightarrow$ $A\leq C_{0}z$.

\begin{figure}
\begin{center}
  \includegraphics[height=150pt]{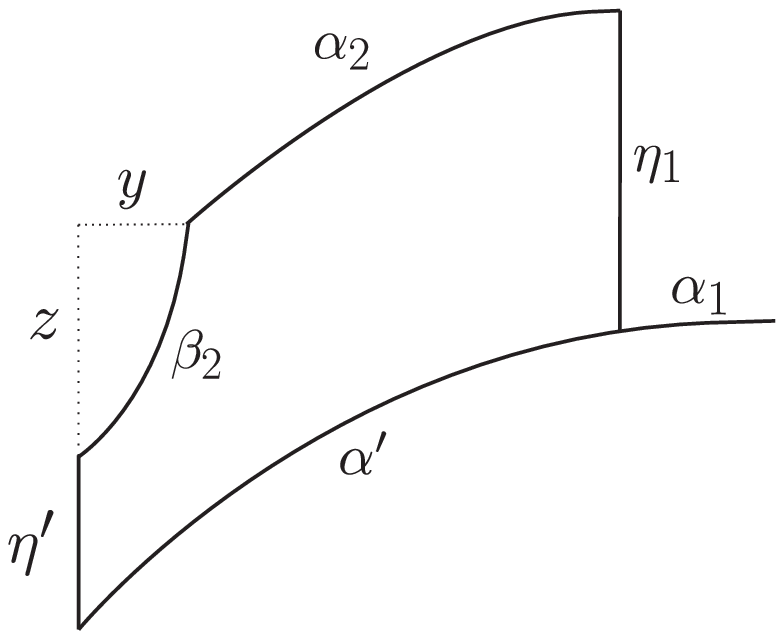}
  \caption{$(\alpha_{1},\eta_{1})+(\alpha_{2},\beta_{2})\rightarrow(\alpha',\eta')$}\label{SRSS-SR}
\end{center}
\end{figure}

\item $(\alpha_{1},\eta_{1})+(\alpha_{2},\beta_{2})\rightarrow(\alpha',\eta')$: Figure \ref{SRSS-SR}\\
$\phantom{333333}$ $A=\alpha'-\alpha_{1}-\alpha_{2}=y$ and
$B=-\beta_{2}=-z$ $\Rightarrow$ $A\leq C_{0}z$.

\end{itemize}


\begin{figure}
\begin{center}
  \includegraphics[height=130pt]{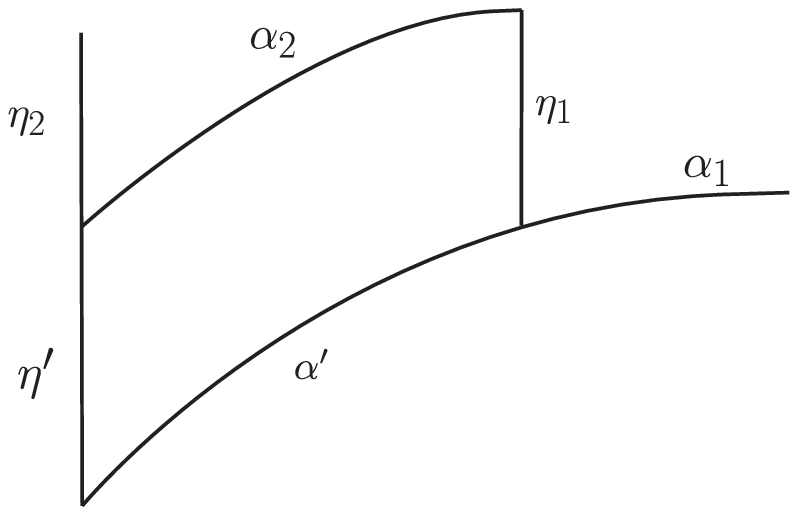}
  \caption{$(\alpha_{1},\eta_{1})+(\alpha_{2},\eta_{2})\rightarrow(\alpha',\eta')$}\label{SRSR-SR}
\end{center}
\end{figure}

\item $(\alpha_{1},\eta_{1})+(\alpha_{2},\eta_{2})\rightarrow(\alpha',\eta')$: Figure \ref{SRSR-SR}\\
$\phantom{333333}$ $A=\alpha'-\alpha_{1}-\alpha_{2}=0\leq 0$ and
$B=0\leq 0$


\item $(\alpha_{1},\eta_{1})+(\mu_{2},\beta_{2})$

\begin{figure}
\begin{center}
  \includegraphics[height=150pt]{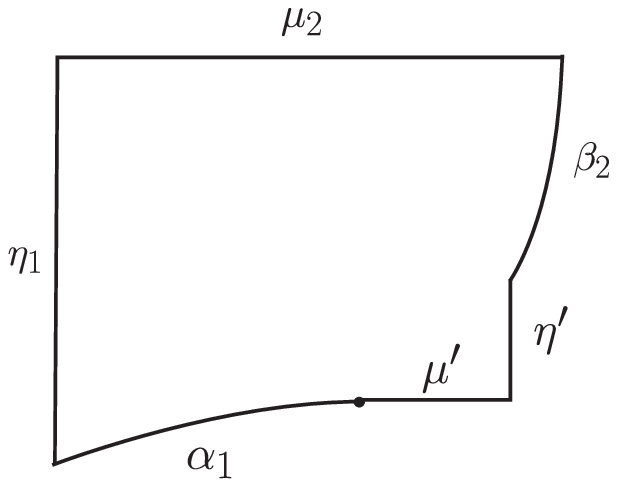}
  \caption{$(\alpha_{1},\eta_{1})+(\mu_{2},\beta_{2})\rightarrow(\mu',\eta')$}\label{SRRS-RR}
\end{center}
\end{figure}

\begin{itemize}

\item $(\alpha_{1},\eta_{1})+(\mu_{2},\beta_{2})\rightarrow(\mu',\eta')$: Figure \ref{SRRS-RR}\\
$\phantom{333333}$ $A=-\alpha_{1}\leq 0$ and $B=-\beta_{2}\leq 0.$

\begin{figure}
\begin{center}
  \includegraphics[height=150pt]{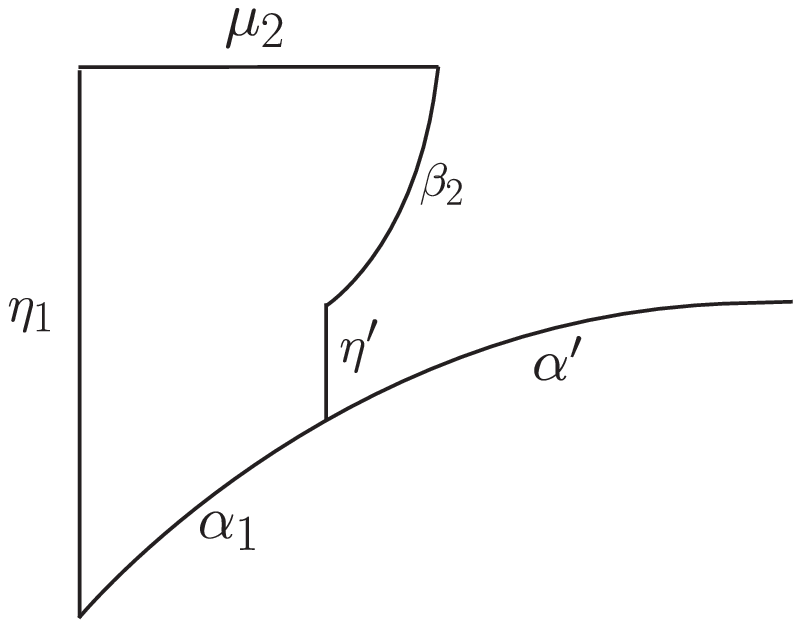}
  \includegraphics[height=150pt]{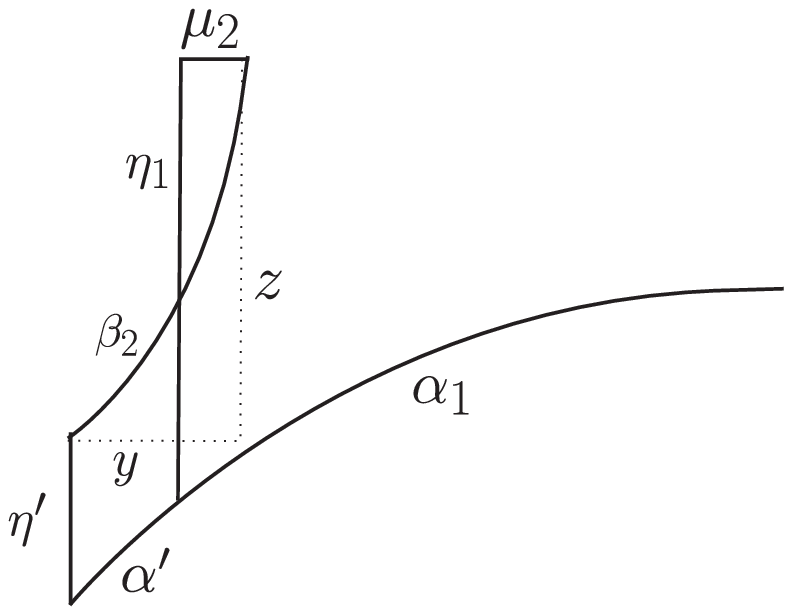}
  \caption{$(\alpha_{1},\eta_{1})+(\mu_{2},\beta_{2})\rightarrow(\alpha',\eta')$, $A,B\leq 0$ and $0\leq A$.}\label{SRRS-SR}
\end{center}
\end{figure}

\item $(\alpha_{1},\eta_{1})+(\mu_{2},\beta_{2})\rightarrow(\alpha',\eta')$: Figure \ref{SRRS-SR}\\
$\phantom{333333}$ Either $A\leq 0$ and $B\leq 0$ or $A\leq y$ and
$B=-\beta_{2}=z$, therefore, $A\leq -C_{0}B$.

\begin{figure}
\begin{center}
  \includegraphics[height=150pt]{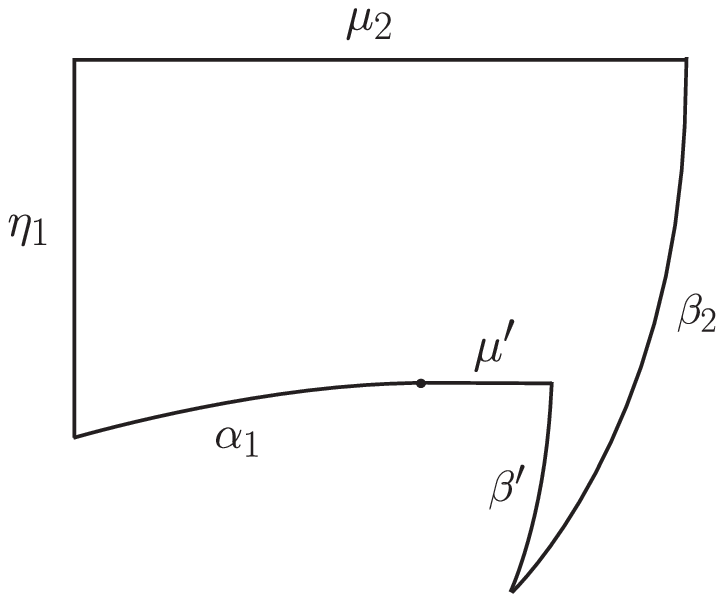}
  \includegraphics[height=100pt]{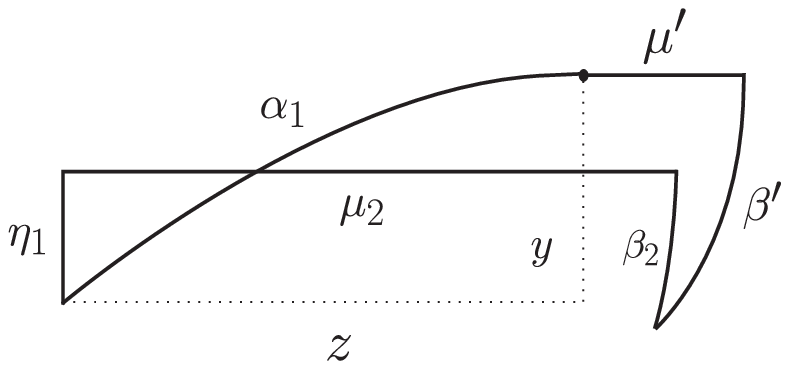}
  \caption{$(\alpha_{1},\eta_{1})+(\mu_{2},\beta_{2})\rightarrow(\mu',\beta')$, $A,B\leq 0$, and $0\leq B$}\label{SRRS-RS}
\end{center}
\end{figure}

\item $(\alpha_{1},\eta_{1})+(\mu_{2},\beta_{2})\rightarrow(\mu',\beta')$: Figure \ref{SRRS-RS}\\
$\phantom{333333}$ Either $A\leq 0$ and $B\leq 0$ or
$B=\beta'-\beta_{2}\leq y$ and $A=-\alpha_{1}=-z$, therefore, $B\leq
C_{0}z$.

\begin{figure}
\begin{center}
  \includegraphics[width=170pt]{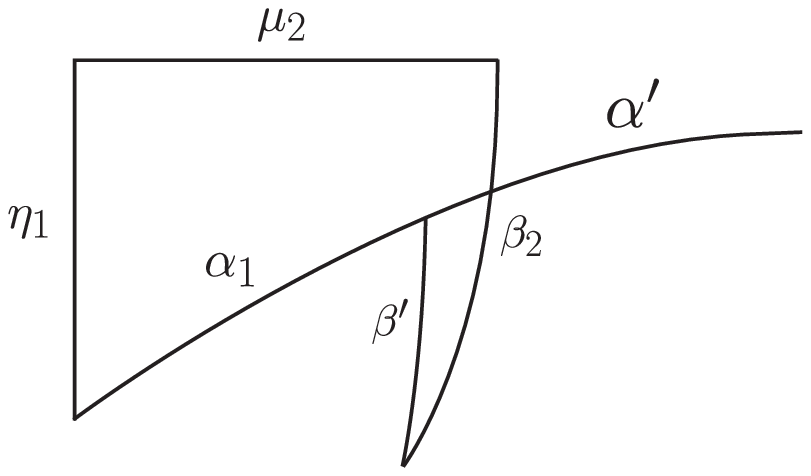}
  \caption{$(\alpha_{1},\eta_{1})+(\mu_{2},\beta_{2})\rightarrow(\alpha',\beta')$, Case $1$.}\label{SRRS-SS2}
\end{center}
\end{figure}

\begin{figure}
\begin{center}
  \includegraphics[width=170pt]{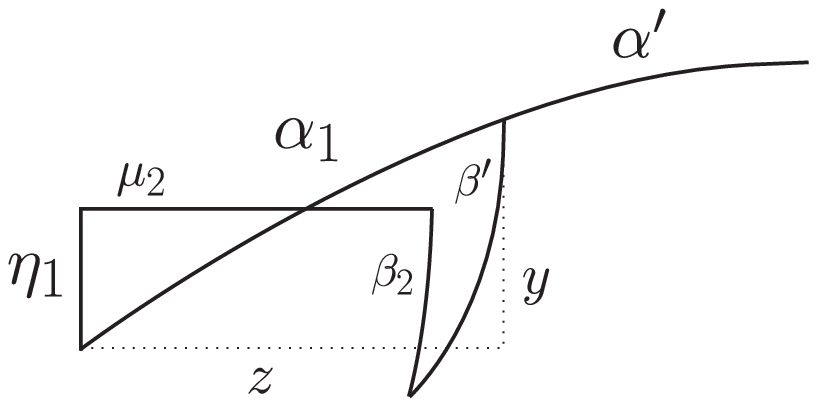}
  \includegraphics[width=170pt]{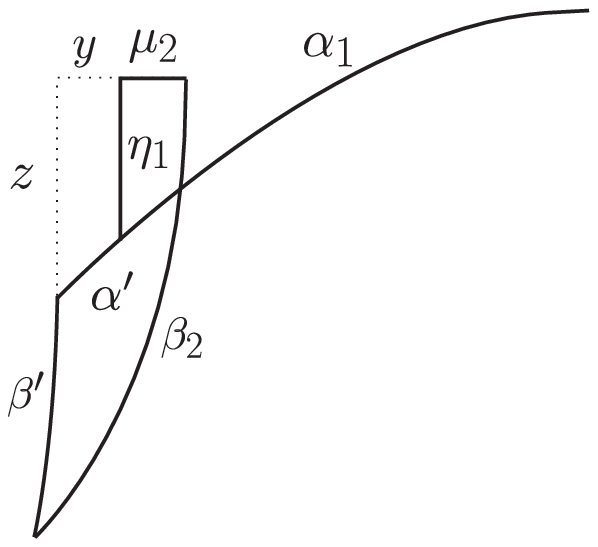}
  \caption{$(\alpha_{1},\eta_{1})+(\mu_{2},\beta_{2})\rightarrow(\alpha',\beta')$, Case $2$ and $3$.}\label{SRRS-SS1}
\end{center}
\end{figure}

\item $(\alpha_{1},\eta_{1})+(\mu_{2},\beta_{2})\rightarrow(\alpha',\beta')$: Figure \ref{SRRS-SS1} and \ref{SRRS-SS2} \\
$\phantom{333333}$\\
Case $1.)$ $A\leq 0$ and $B\leq 0$.\\
Case $2.)$ $A=\alpha'-\alpha_{1}=-z$ and
$B=\beta'-\beta_{1}\leq y\leq C_{0}z$.\\
Case $3.)$ $B=\beta'-\beta_{2}=-z$ and $A=\alpha'-\alpha_{1}=y\leq
C_{0}z$.

\end{itemize}

\newpage


\item $(\alpha_{1},\eta_{1})+(\mu_{2},\eta_{2})$

\begin{itemize}
  \item
  $(\alpha_{1},\eta_{1})+(\mu_{2},\eta_{2})\rightarrow(\mu',\eta')$\\
  $\phantom{333333}$ $A=-\alpha_{1}\leq 0$ and $B=0\leq 0.$

  \item
  $(\alpha_{1},\eta_{1})+(\mu_{2},\eta_{2})\rightarrow(\alpha',\eta')$\\
  $\phantom{333333}$ $A=-\mu_{2}\leq 0$ and $B=0\leq 0.$

\begin{figure}
\begin{center}
  \includegraphics[width=170pt]{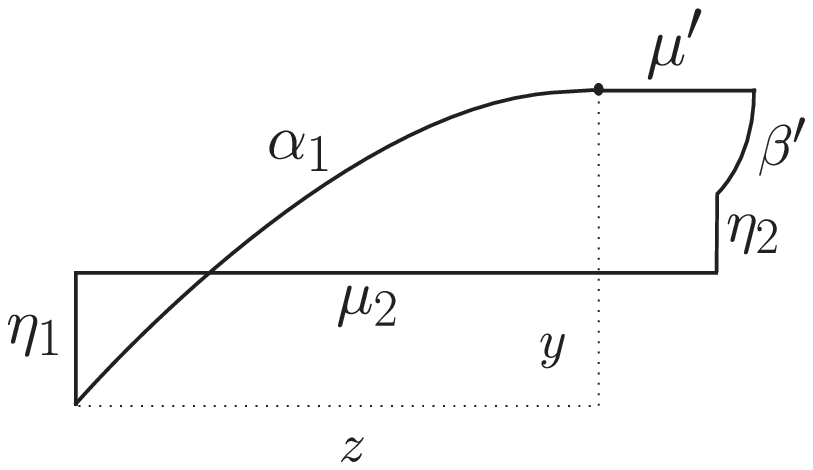}
  \caption{$(\alpha_{1},\eta_{1})+(\mu_{2},\eta_{2})\rightarrow(\mu',\beta')$}\label{SRRR-RS}
\end{center}
\end{figure}

  \item $(\alpha_{1},\eta_{1})+(\mu_{2},\eta_{2})\rightarrow(\mu',\beta')$: Figure \ref{SRRR-RS}\\
  $\phantom{333333}$ $A=-\alpha_{1}=-z$ and $B=\beta'\leq y \leq C_{0}z$.

\begin{figure}
\begin{center}
  \includegraphics[height=150pt]{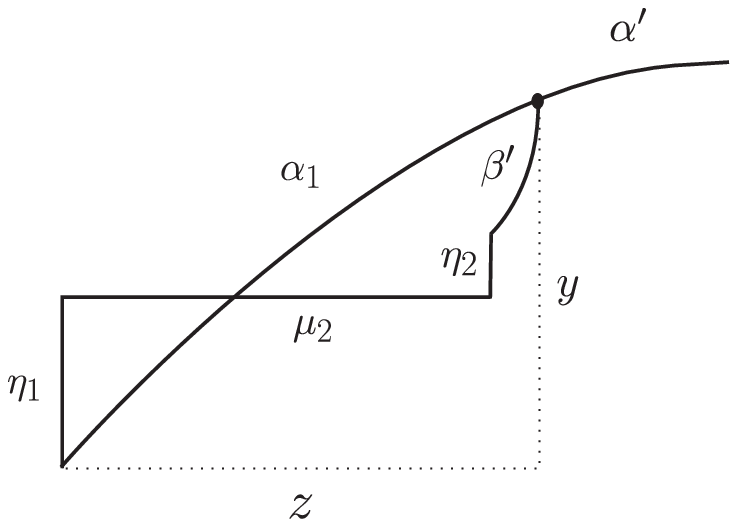}
  \caption{$(\alpha_{1},\eta_{1})+(\mu_{2},\eta_{2})\rightarrow(\alpha',\beta')$}\label{SRRR-SS}
\end{center}
\end{figure}

  \item $(\alpha_{1},\eta_{1})+(\mu_{2},\eta_{2})\rightarrow(\alpha',\beta')$: Figure \ref{SRRR-SS}\\
  $\phantom{333333}$ $A=\alpha'-\alpha_{1}=-z$ and $B=\beta'\leq y \leq C_{0}z$.

\end{itemize}


\item $(\mu_{1},\beta_{1})+(\alpha_{2},\beta_{2})$

\begin{figure}
\begin{center}
  \includegraphics[height=150pt]{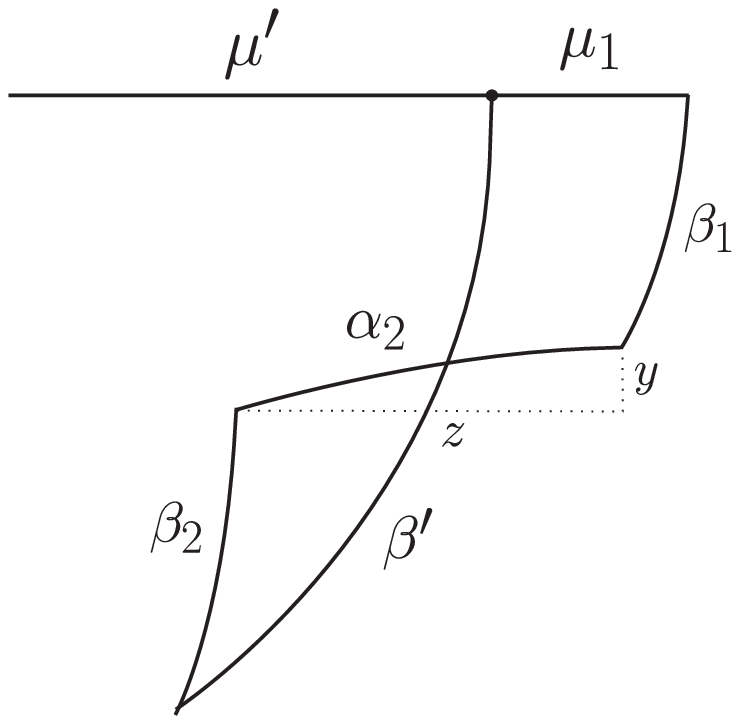}
  \caption{$(\mu_{1},\beta_{1})+(\alpha_{2},\beta_{2})\rightarrow(\mu',\beta')$}\label{RSSS-RS}
\end{center}
\end{figure}

\begin{itemize}

\item $(\mu_{1},\beta_{1})+(\alpha_{2},\beta_{2})\rightarrow(\mu',\beta')$: Figure \ref{RSSS-RS}\\
  $\phantom{333333}$ $A=-\alpha_{2}=-z$ and $B=y \leq C_{0}z$.

\begin{figure}
\begin{center}
  \includegraphics[height=150pt]{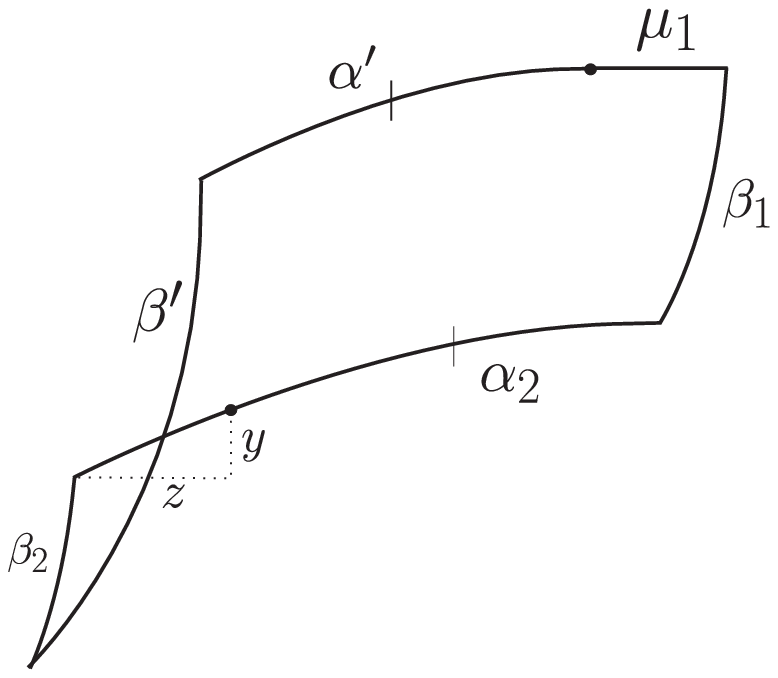}
  \caption{$(\mu_{1},\beta_{1})+(\alpha_{2},\beta_{2})\rightarrow(\alpha',\beta')$}\label{RSSS-SS}
\end{center}
\end{figure}

\item $(\mu_{1},\beta_{1})+(\alpha_{2},\beta_{2})\rightarrow(\alpha',\beta')$: Figure \ref{RSSS-SS}\\
  $\phantom{333333}$ $A=\alpha'-\alpha_{2}=-z$ and $B=\beta'-\beta_{1}-\beta_{2}=y \leq C_{0}z$.

\end{itemize}


\begin{figure}
\begin{center}
  \includegraphics[height=150pt]{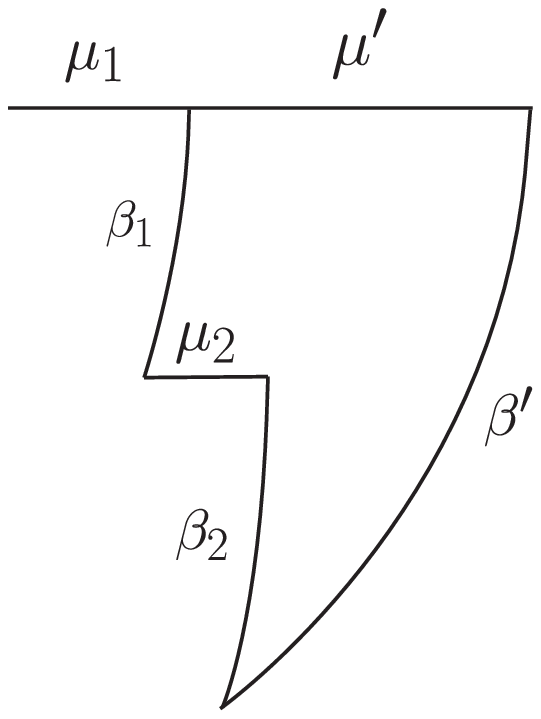}
  \caption{$(\mu_{1},\beta_{1})+(\mu_{2},\beta_{2})\rightarrow(\mu',\beta')$}\label{RSRS-RS}
\end{center}
\end{figure}

\item $(\mu_{1},\beta_{1})+(\mu_{2},\beta_{2})\rightarrow(\mu',\beta')$: Figure \ref{RSRS-RS}\\
$\phantom{333333}$ $A=0\leq 0$ and
$B=\beta'-\beta_{1}-\beta_{2}=0\leq 0$.


\item $(\mu_{1},\beta_{1})+(\alpha_{2},\eta_{2})$

\begin{itemize}

\item $(\mu_{1},\beta_{1})+(\alpha_{2},\eta_{2})\rightarrow(\mu',\eta')$\\
  $\phantom{333333}$ $A=-\alpha_{2}\leq 0$ and $B=-\beta_{1} \leq 0$.

\begin{figure}
\begin{center}
  \includegraphics[height=150pt]{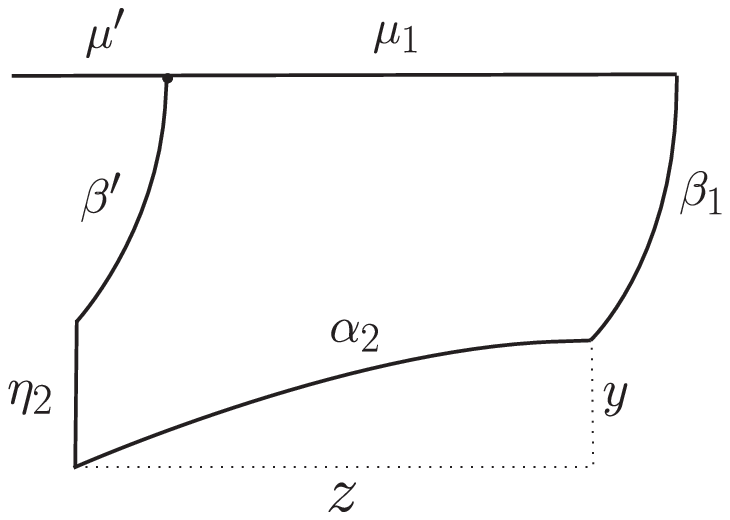}
  \caption{$(\mu_{1},\beta_{1})+(\alpha_{2},\eta_{2})\rightarrow(\mu',\beta')$}\label{RSSR-RS}
\end{center}
\end{figure}

\item $(\mu_{1},\beta_{1})+(\alpha_{2},\eta_{2})\rightarrow(\mu',\beta')$: Figure \ref{RSSR-RS}\\
  $\phantom{333333}$ $A,B\leq 0$ or, $A=-\alpha_{2}=-z$ and $B=\beta'-\beta_{1}\leq y \leq C_{0}z$.

\begin{figure}
\begin{center}
  \includegraphics[height=150pt]{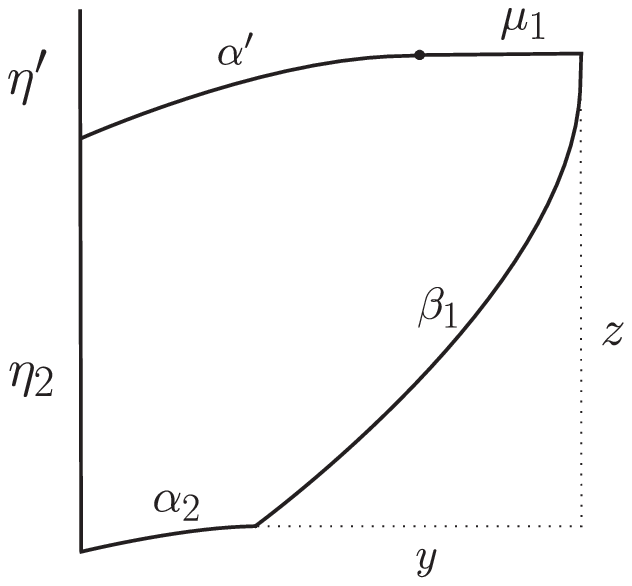}
  \caption{$(\mu_{1},\beta_{1})+(\alpha_{2},\eta_{2})\rightarrow(\alpha',\eta')$}\label{RSSR-SR}
\end{center}
\end{figure}

\item $(\mu_{1},\beta_{1})+(\alpha_{2},\eta_{2})\rightarrow(\alpha',\eta')$: Figure \ref{RSSR-SR}\\
  $\phantom{333333}$ $A,B\leq 0$ or, we have $y+\alpha_{2}=\mu_{1}+\alpha'$. Therefore, with $B=-\beta_{1}=-z$, $A=\alpha'-\alpha_{2}=y-\mu_{1}\leq y \leq
  C_{0}z$.

\begin{figure}
\begin{center}
  \includegraphics[width=170pt]{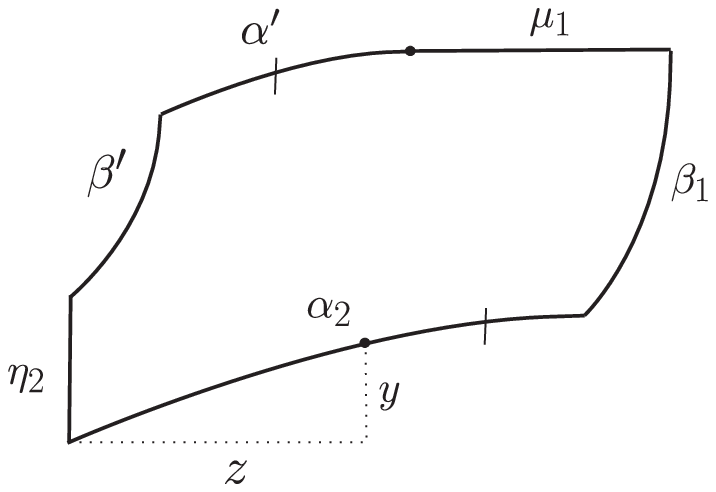}
  \includegraphics[width=140pt]{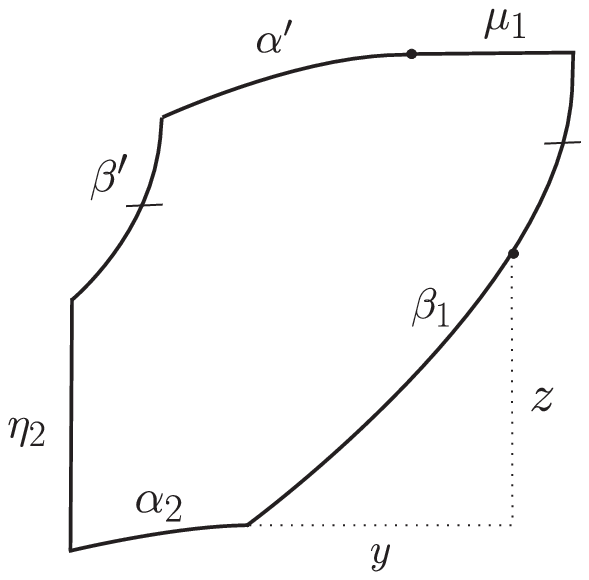}
  \caption{$(\mu_{1},\beta_{1})+(\alpha_{2},\eta_{2})\rightarrow(\alpha',\beta')$ Case $2$ and $3$.}\label{RSSR-SS}
\end{center}
\end{figure}

\item $(\mu_{1},\beta_{1})+(\alpha_{2},\eta_{2})\rightarrow(\alpha',\beta')$: Figure \ref{RSSR-SS}\\
  $\phantom{333333}$ Case $1.)$ $A\leq 0$ and $B\leq 0$.

  $\phantom{333333}$ Case $2.)$ $A=\alpha'-\alpha_{2}=-z$.  Then we know $\beta'+\eta_{2}=\beta'+y$ and therefore, $B=\beta'-\beta_{1}\leq y \leq C_{0}z$.

  $\phantom{333333}$ Case $3.)$ $B=\beta'-\beta_{1}=-z$.  Then we know $\alpha'+\mu_{1}=\alpha_{2}'+y$ and therefore, $A=\alpha'-\alpha_{2}\leq y \leq C_{0}z$.

\end{itemize}


\item $(\mu_{1},\beta_{1})+(\mu_{2},\eta_{2})$

\begin{itemize}

\item $(\mu_{1},\beta_{1})+(\mu_{2},\eta_{2})\rightarrow(\mu',\eta')$\\
  $\phantom{333333}$ $A=0\leq 0$ and $B=-\beta_{1} \leq 0$.

\begin{figure}
\begin{center}
  \includegraphics[height=130pt]{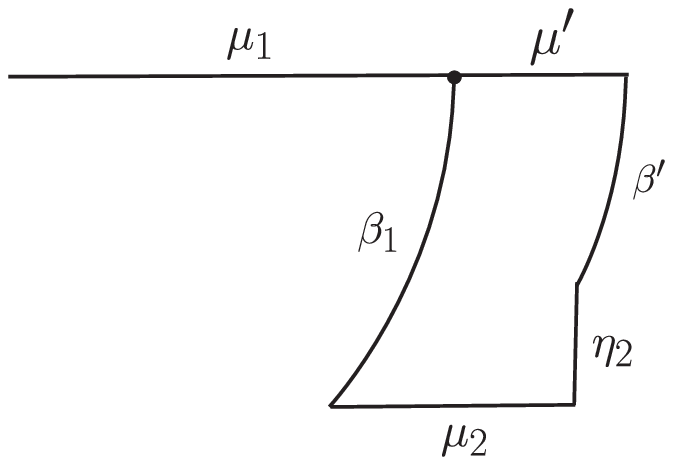}
  \caption{$(\mu_{1},\beta_{1})+(\mu_{2},\eta_{2})\rightarrow(\mu',\beta')$}\label{RSRR-RS}
\end{center}
\end{figure}

 \item $(\mu_{1},\beta_{1})+(\mu_{2},\eta_{2})\rightarrow(\mu',\beta')$: Figure \ref{RSRR-RS}\\
  $\phantom{333333}$ $A=0$ and $B=\beta'-\beta_{2}\leq 0$.

\begin{figure}
\begin{center}
  \includegraphics[height=150pt]{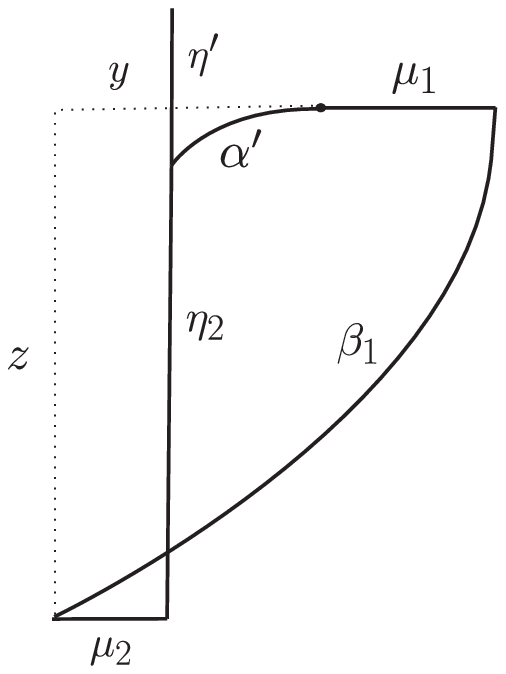}
  \caption{$(\mu_{1},\beta_{1})+(\mu_{2},\eta_{2})\rightarrow(\alpha',\eta')$}\label{RSRR-SR}
\end{center}
\end{figure}

 \item $(\mu_{1},\beta_{1})+(\mu_{2},\eta_{2})\rightarrow(\alpha',\eta')$: Figure \ref{RSRR-SR}\\
  $\phantom{333333}$ $B=-\beta_{1}=-z$ and $A=\alpha'\leq y\leq C_{0}z$.

\begin{figure}
\begin{center}
  \includegraphics[height=150pt]{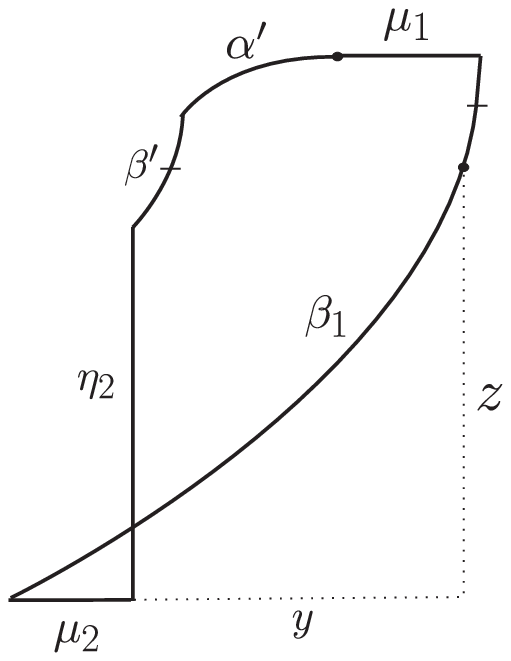}
  \caption{$(\mu_{1},\beta_{1})+(\mu_{2},\eta_{2})\rightarrow(\alpha',\beta')$}\label{RSRR-SS}
\end{center}
\end{figure}

 \item $(\mu_{1},\beta_{1})+(\mu_{2},\eta_{2})\rightarrow(\alpha',\beta')$: Figure \ref{RSRR-SS}\\
  $\phantom{333333}$ We have $B=\beta'-\beta_{1}=-z$ and $y=\alpha'+\mu_{1}+\mu_{2}$. Thus, $A=\alpha' \leq y \leq C_{0}z$.

\end{itemize}

\vspace{1in}


\item $(\mu_{1},\eta_{1})+(\alpha_{2},\beta_{2})$

\begin{figure}
\begin{center}
  \includegraphics[height=150pt]{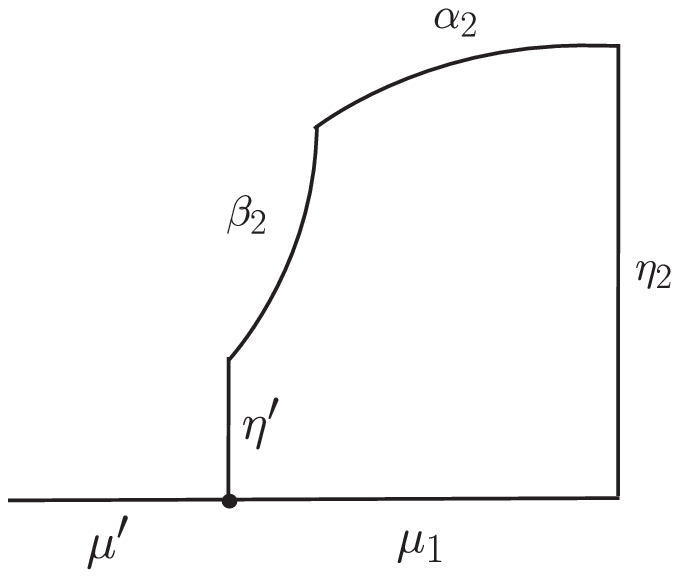}
  \caption{$(\mu_{1},\eta_{1})+(\alpha_{2},\beta_{2})\rightarrow(\mu',\eta')$}\label{RRSS-RR}
\end{center}
\end{figure}

\begin{itemize}

\item $(\mu_{1},\eta_{1})+(\alpha_{2},\beta_{2})\rightarrow(\mu',\eta')$: Figure \ref{RRSS-RR}\\
  $\phantom{333333}$ We have $A=-\alpha_{2}\leq 0$ and $B=-\beta_{2}\leq 0$.

\begin{figure}
\begin{center}
  \includegraphics[height=150pt]{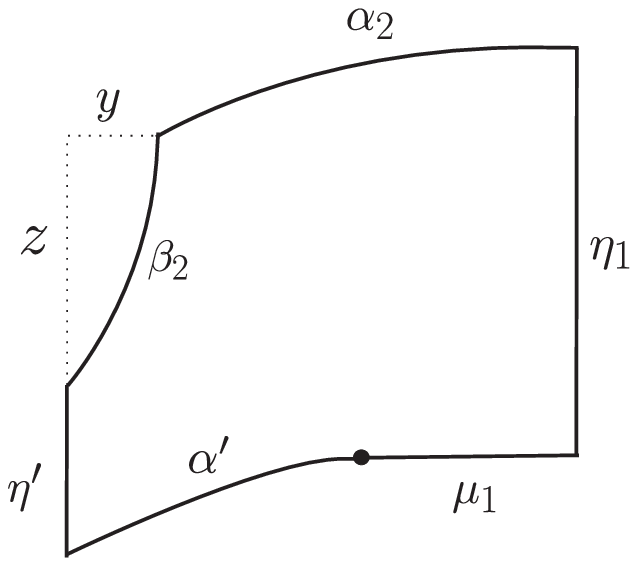}
  \caption{$(\mu_{1},\eta_{1})+(\alpha_{2},\beta_{2})\rightarrow(\alpha',\eta')$}\label{RRSS-SR}
\end{center}
\end{figure}

\item $(\mu_{1},\eta_{1})+(\alpha_{2},\beta_{2})\rightarrow(\alpha',\eta')$: Figure \ref{RRSS-SR}\\
  $\phantom{333333}$ $B=-\beta_{2}=-z\leq 0$ and $A=\alpha'-\alpha_{2}=y-\mu_{1}\leq y\leq C_{0}z$.

\begin{figure}
\begin{center}
  \includegraphics[height=150pt]{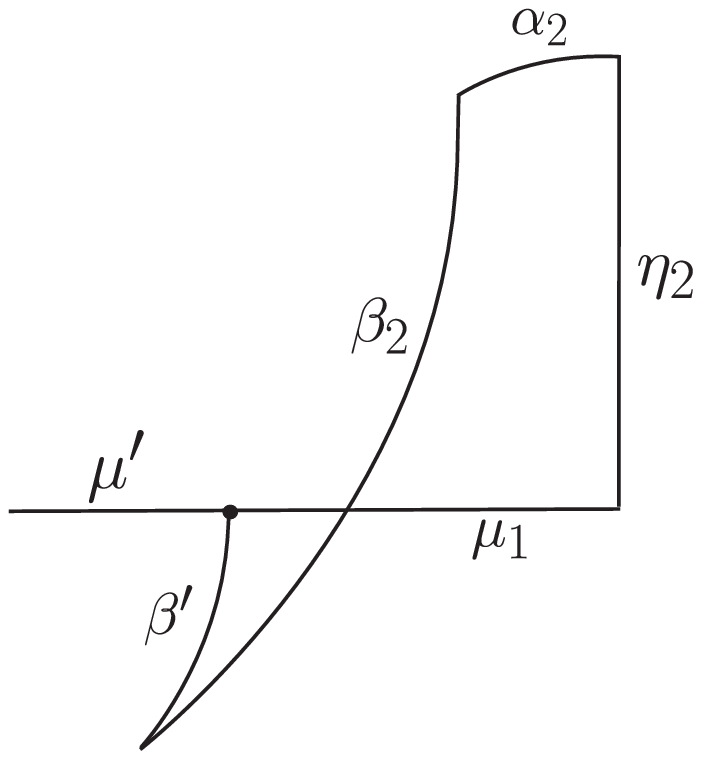}
  \caption{$(\mu_{1},\eta_{1})+(\alpha_{2},\beta_{2})\rightarrow(\mu',\beta')$}\label{RRSS-RS}
\end{center}
\end{figure}

\item $(\mu_{1},\eta_{1})+(\alpha_{2},\beta_{2})\rightarrow(\mu',\beta')$: Figure \ref{RRSS-RS}\\
  $\phantom{333333}$ $A=-\alpha_{2} \leq 0$, $B=\beta'-\beta_{2}\leq -\eta_{2}\leq 0$.

\begin{figure}
\begin{center}
  \includegraphics[height=150pt]{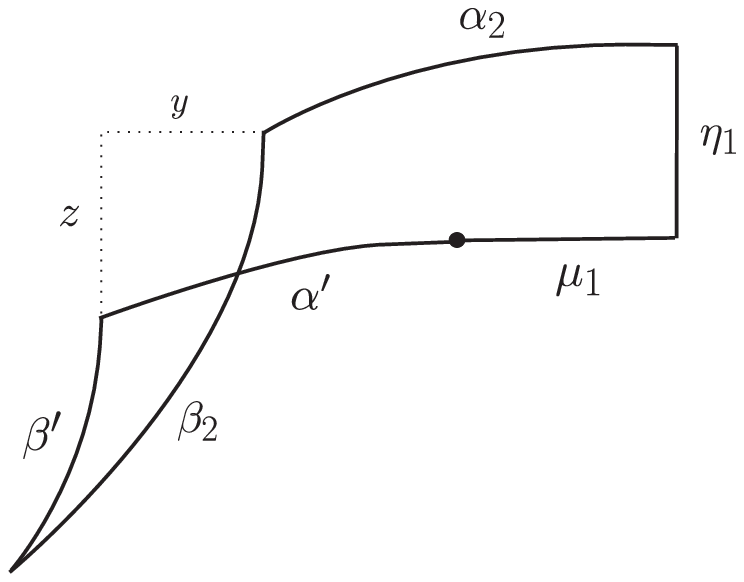}
  \includegraphics[height=150pt]{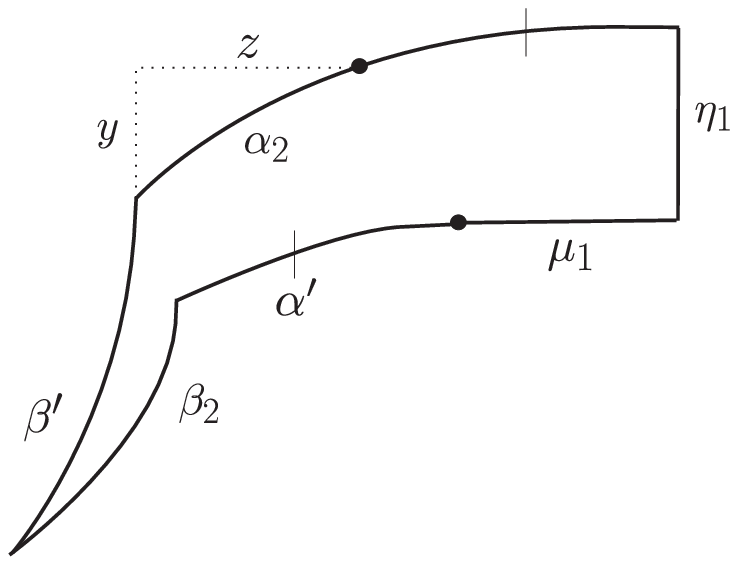}
  \caption{$(\mu_{1},\eta_{1})+(\alpha_{2},\beta_{2})\rightarrow(\alpha',\beta')$ Case $1$ and $2$.}\label{RRSS-SS}
\end{center}
\end{figure}

\item $(\mu_{1},\eta_{1})+(\alpha_{2},\beta_{2})\rightarrow(\alpha',\beta')$: Figure \ref{RRSS-SS}\\
  $\phantom{333333}$ Case $1.)$ $B=\beta'-\beta_{2}=-z$.  Since $\alpha_{2}+y=\mu_{1}+\alpha'$, we have $A=\alpha'-\alpha_{2}=y-\mu_{1}$.  Therefore, $A=\alpha'-\alpha_{2}\leq y \leq C_{0}z$.\\

  $\phantom{333333}$ Case $2.)$ $A=\alpha'-\alpha_{2}=-z$.  Since $\beta_{2}+y=\beta'+\eta_{1}$, we have $B=\beta'-\beta_{2} \leq y \leq C_{0}z$.

\end{itemize}

\vspace{1in}


\item $(\mu_{1},\eta_{1})+(\mu_{2},\beta_{2})$

\begin{figure}
\begin{center}
  \includegraphics[width=150pt]{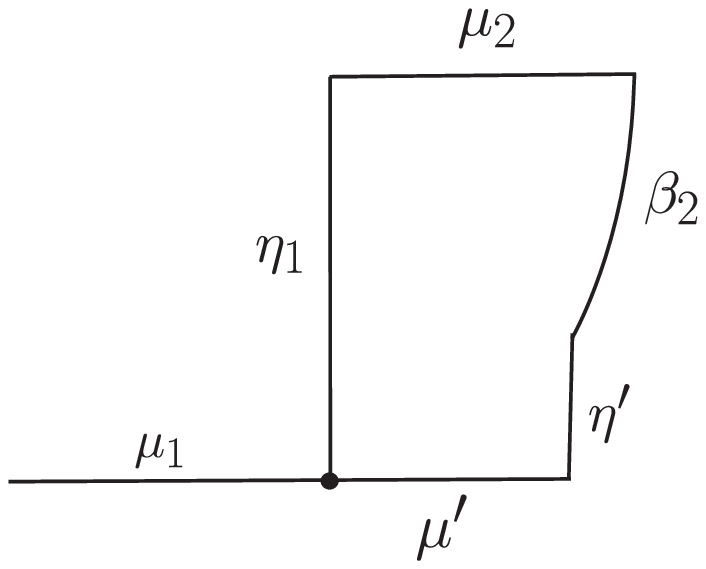}
  \caption{$(\mu_{1},\eta_{1})+(\mu_{2},\beta_{2})\rightarrow(\mu',\eta')$}\label{RRRS-RR}
\end{center}
\end{figure}

\begin{itemize}

\item $(\mu_{1},\eta_{1})+(\mu_{2},\beta_{2})\rightarrow(\mu',\eta')$: Figure \ref{RRRS-RR}\\
  $\phantom{333333}$ We have $A=0 \leq 0$ and $B=-\beta_{2} \leq 0$.

\begin{figure}
\begin{center}
  \includegraphics[width=150pt]{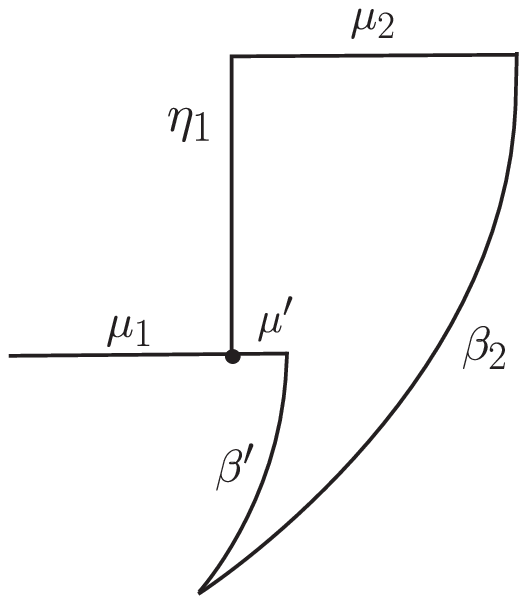}
  \caption{$(\mu_{1},\eta_{1})+(\mu_{2},\beta_{2})\rightarrow(\mu',\beta')$}\label{RRRS-RS}
\end{center}
\end{figure}

\item $(\mu_{1},\eta_{1})+(\mu_{2},\beta_{2})\rightarrow(\mu',\beta')$: Figure \ref{RRRS-RS}\\
  $\phantom{333333}$ $A=0 \leq 0$ and $B=\beta'-\beta_{2}=-\eta_{1} \leq 0$.

\begin{figure}
\begin{center}
  \includegraphics[height=150pt]{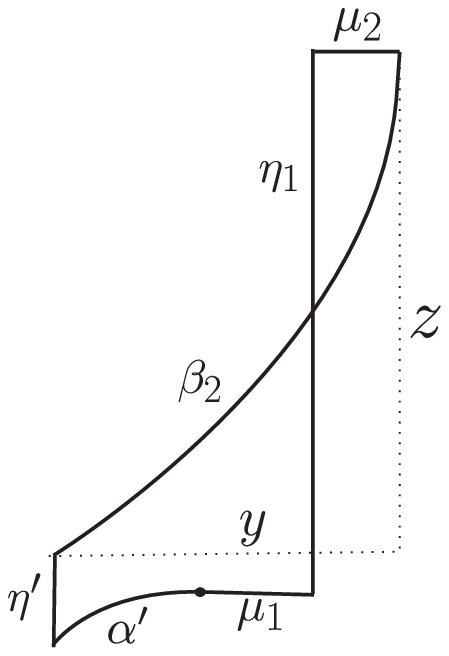}
  \caption{$(\mu_{1},\eta_{1})+(\mu_{2},\beta_{2})\rightarrow(\alpha',\eta')$}\label{RRRS-SR}
\end{center}
\end{figure}

\item $(\mu_{1},\eta_{1})+(\mu_{2},\beta_{2})\rightarrow(\alpha',\eta')$: Figure \ref{RRRS-SR}\\
  $\phantom{333333}$  $B=-\beta_{2}=-z$ and $A=\alpha' \leq y \leq C_{0}z$.

\begin{figure}
\begin{center}
  \includegraphics[height=150pt]{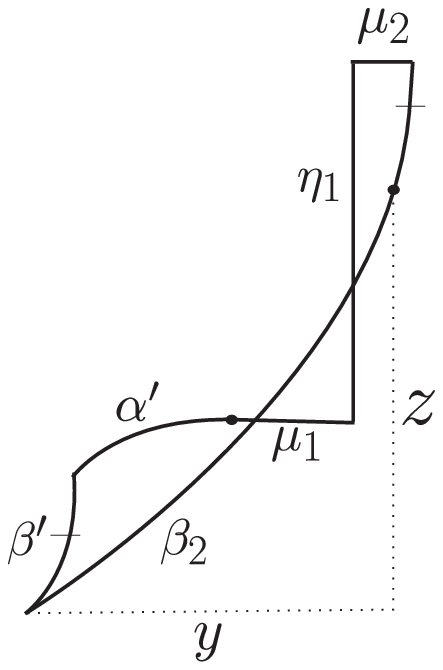}
  \caption{$(\mu_{1},\eta_{1})+(\mu_{2},\beta_{2})\rightarrow(\alpha',\beta')$}\label{RRRS-SS}
\end{center}
\end{figure}

\item $(\mu_{1},\eta_{1})+(\mu_{2},\beta_{2})\rightarrow(\alpha',\beta')$: Figure \ref{RRRS-SS}\\
  $\phantom{333333}$ $B=\beta'-\beta_{2}=-z$ and $A=\alpha'\leq y \leq C_{0}z$.

\end{itemize}


\item $(\mu_{1},\eta_{1})+(\mu_{2},\eta_{2})$

\begin{figure}
\begin{center}
  \includegraphics[width=150pt]{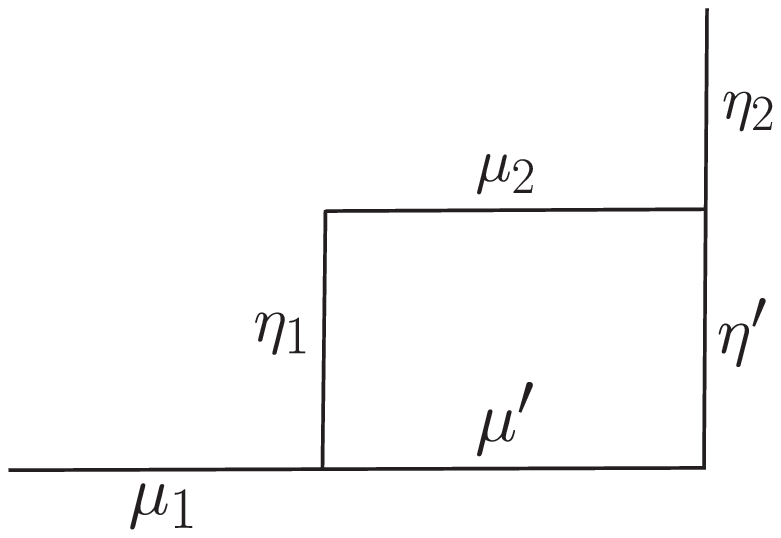}
  \caption{$(\mu_{1},\eta_{1})+(\mu_{2},\eta_{2})\rightarrow(\mu',\eta')$}\label{RRRR-RR}
\end{center}
\end{figure}

\begin{itemize}

\item $(\mu_{1},\eta_{1})+(\mu_{2},\eta_{2})\rightarrow(\mu',\eta')$: Figure \ref{RRRR-RR}\\
  $\phantom{333333}$ We have $A=0 \leq 0$ and $B=0 \leq 0$.

\end{itemize}

\vspace{1in}


\item $(\mu_{1},\eta_{1})+(\alpha_{2},\eta_{2})$

\begin{figure}
\begin{center}
  \includegraphics[width=150pt]{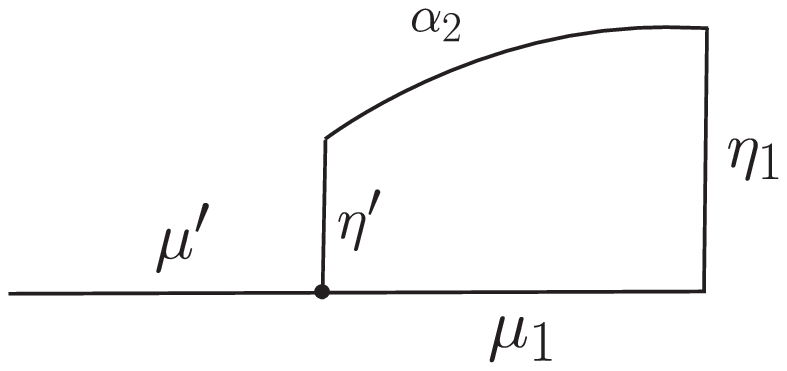}
  \caption{$(\mu_{1},\eta_{1})+(\alpha_{2},\eta_{2})\rightarrow(\mu',\eta')$}\label{RRSR-RR}
\end{center}
\end{figure}

\begin{itemize}

\item $(\mu_{1},\eta_{1})+(\alpha_{2},\eta_{2})\rightarrow(\mu',\eta')$: Figure \ref{RRSR-RR}\\
  $\phantom{333333}$ We have $A=-\alpha_{1}\leq 0$ and $B=0 \leq 0$.

\begin{figure}
\begin{center}
  \includegraphics[width=150pt]{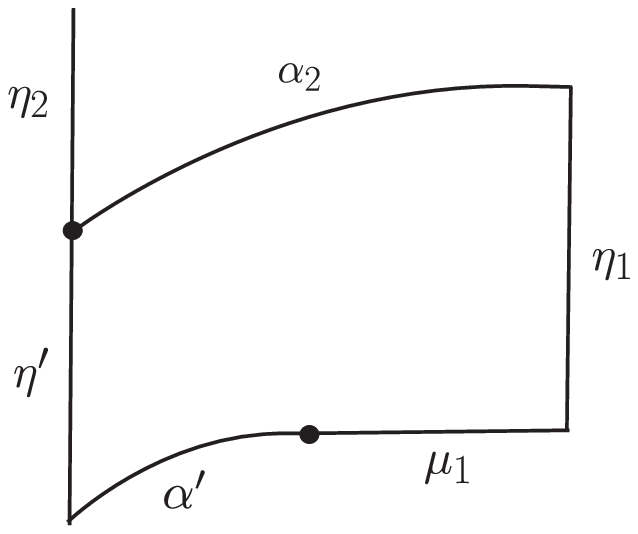}
  \caption{$(\mu_{1},\eta_{1})+(\alpha_{2},\eta_{2})\rightarrow(\alpha',\eta')$}\label{RRSR-SR}
\end{center}
\end{figure}

\item $(\mu_{1},\eta_{1})+(\alpha_{2},\eta_{2})\rightarrow(\alpha',\eta')$: Figure \ref{RRSR-SR}\\
  $\phantom{333333}$ We have $A=\alpha'-\alpha_{2}=-\mu_{1}\leq 0$ and $B=0 \leq 0$.

\begin{figure}
\begin{center}
  \includegraphics[width=170pt]{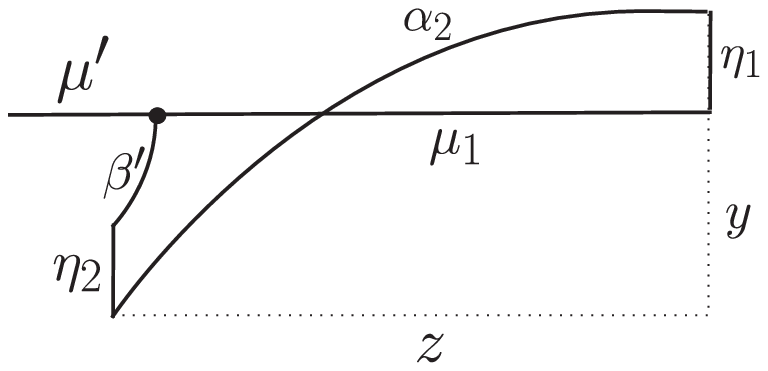}
  \caption{$(\mu_{1},\eta_{1})+(\alpha_{2},\eta_{2})\rightarrow(\mu',\beta')$}\label{RRSR-RS}
\end{center}
\end{figure}

\item $(\mu_{1},\eta_{1})+(\alpha_{2},\eta_{2})\rightarrow(\mu',\beta')$: Figure \ref{RRSR-RS}\\
  $\phantom{333333}$ We have $A=-\alpha_{2}=-z$ and $B\leq y \leq C_{0}z$.

\begin{figure}
\begin{center}
  \includegraphics[width=190pt]{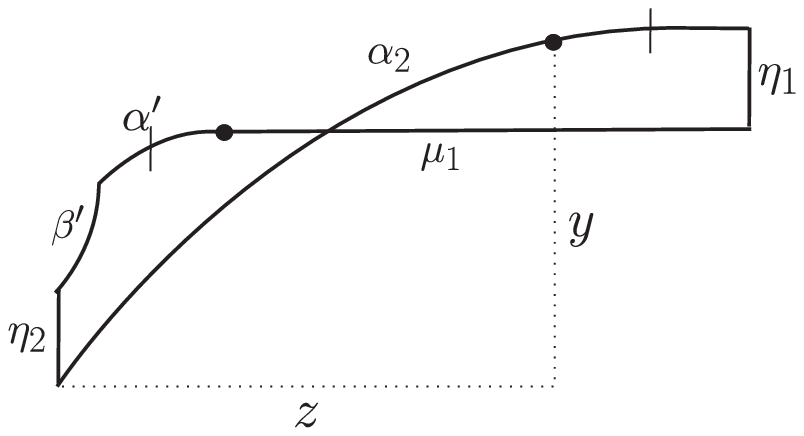}
  \caption{$(\mu_{1},\eta_{1})+(\alpha_{2},\eta_{2})\rightarrow(\alpha',\beta')$}\label{RRSR-SS}
\end{center}
\end{figure}

\item $(\mu_{1},\eta_{1})+(\alpha_{2},\eta_{2})\rightarrow(\alpha',\beta')$: Figure \ref{RRSR-SS}\\
  $\phantom{333333}$ We have $A=\alpha'-\alpha_{2}=-z$ and $B=\beta'\leq y \leq C_{0}z$.

\end{itemize}

\end{enumerate}

    \fancypagestyle{headings}{
  \lhead{}
  \fancyhfoffset[r]{.5in}
  \rhead{\thepage \skipline \skipline \skipline}
  \cfoot{}
  \renewcommand{\headrulewidth}{0in}
} \pagestyle{headings}

\end{document}